\title[Lower bounds for the multiplicity of the resultant]{Multigraded Koszul Complexes,\\ Filter-Regular Sequences and \\Lower Bounds for the Multiplicity of the Resultant}
\author{Luca Ghidelli}
\date{\today}
\address{150 Louis-Pasteur Private, Office 608, Department of Mathematics and Statistics, University of Ottawa, Ottawa ON K1N 9A7, Canada}
\email{{luca.ghidelli@uottawa.ca}}
\subjclass[2010]{Primary: 13P15, 13D02, 13H15, 14L99, 16W70; Secondary: 11J81, 13C15, 14C17, 16W50}
\DeclareRobustCommand{\SkipTocEntry}[4]{}
\begin{document}


\begin{abstract}
The R\'emond resultant attached to a multiprojective variety and a sequence of multihomogeneous polynomials is a polynomial form in the coefficients of the polynomials, which vanishes if and only if the polynomials have a common zero on the variety. 
We demonstrate that this resultant can be computed as a Cayley determinant of a multigraded Koszul complex, proving a key stabilization property with the aid of local Hilbert functions and the notion of filter-regular sequences. 
Then we prove that the R\'emond resultant vanishes, under suitable hypotheses, with order at least equal to the number of common zeros of the polynomials. 
More generally, we estimate the multiplicity of resultants of multihomogeneous polynomials along prime ideals of the coefficient ring, thus considering for example the order of $p$-adic vanishing. 
Finally, we exhibit a corollary of this multiplicity estimate in the context of interpolation on commutative algebraic groups, with applications to Transcendental Number Theory.
\end{abstract}

\maketitle

\section*{Introduction} \label{sec:intro}


\addtocontents{toc}{\SkipTocEntry}
The theory of resultants is an old branch of Mathematics which provides important tools, both computational and theoretical, in many other fields.
One of the most classical versions of a resultant, named after Macaulay \cite{Macaulay}, is defined for 
a sequence $\Seq=(\seq_0,\ld,\seq_\r)$ of $\r+1$ homogeneous polynomials 
in $\r+1$ variables $\x=(x_0,\ld,x_\r)$ over a field $\kk$.
The Macaulay resultant of $\Seq$ is an irreducible polynomial of the unknown coefficients of $\Seq$ uniquely determined, up to a multiplication by a constant, by the following property: it vanishes if and only if the polynomials in $\Seq$ admit a nontrivial common zero over an algebraic closure of $\kk$. 
It turns out that such implicit characterization gives rise to a mathematical object that can be computed explicitly \cite{physics,MatricesElimination,Cox:algorithms} and that satisfies several remarkable properties \cite{Cox:using,SturmfelsIntroduction,Jouanolou:formalisme,Jouanolou:invariant}. 
In this paper we discuss the following statement, together with its generalizations and applications. 

\begin{theorem}\label{thm:main:macaulay}
	Suppose that the polynomials $\Seq$ have exactly $N$ common roots, counting multiplicities. Then the Macaulay resultant, considered as a polynomial function, vanishes with multiplicity at least $N$ when specialized at the coefficients of $\Seq$. 
\end{theorem}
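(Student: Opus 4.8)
The plan is to interpret the multiplicity of $\mathrm{Res}$ as the length of a degeneration: deform $\Seq$ to a common‑zero‑free one‑parameter family and read off the order of vanishing from the Cayley‑determinant description of the resultant developed in this paper.

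\emph{Reduction to a one‑parameter family.} First I would trade the multiplicity for an order of vanishing along a line. If $P$ is a nonzero polynomial in the universal coefficients and $\Seq$ is viewed as a point of coefficient space, then $\mathrm{mult}_\Seq(P)=\mathrm{ord}_{t=0}P(\Seq+t\,\mathbf h)$ for a \emph{generic} tuple $\mathbf h=(h_0,\ld,h_\r)$ of forms with $\deg h_i=\deg\seq_i$, since the lowest nonzero homogeneous component of the Taylor expansion of $P$ at $\Seq$ is a nonzero form and hence does not vanish at a generic direction. Applying this to $P=\mathrm{Res}$, it suffices to show $\mathrm{ord}_{t=0}\mathrm{Res}(F_0,\ld,F_\r)\ge N$ for the family $F_i:=\seq_i+t\,h_i$; this order is finite because $\mathrm{Res}$ is a nonzero polynomial and $\mathbf h$ is generic.

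\emph{From the order to a length.} Next I would work over the discrete valuation ring $R=\kk[t]_{(t)}$ and let $S$ be the polynomial ring $R[\x]$, a graded Cohen--Macaulay ring of dimension $\r+2$. The decisive observation is that $F_0,\ld,F_\r$ is a \emph{regular sequence} in $S$: since $\mathrm{Res}(F_\bullet)\ne0$ already in $\mathrm{Frac}(R)=\kk(t)$, the $F_i$ have no common zero over $\overline{\kk(t)}$, so $\mathcal X:=\mathrm{Proj}(S/(F_\bullet))\subseteq\mathbb P^\r_R$ has empty generic fibre while its special fibre is the finite scheme $V(\seq_\bullet)$; hence $\dim\mathcal X=0$, the affine cone $S/(F_\bullet)$ has dimension $1=(\r+2)-(\r+1)$, and the regular‑sequence property follows from Cohen--Macaulayness of $S$. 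Consequently the graded Koszul complex $K_\bullet(F_\bullet;S)$ is a free resolution of $S/(F_\bullet)$, so its degree‑$\nu$ strand $C_\bullet:=K_\bullet(F_\bullet;S)_\nu$ (a complex of finite free $R$‑modules, $\nu$ being Macaulay's critical degree) satisfies $H_i(C_\bullet)=0$ for $i>0$ and $H_0(C_\bullet)=(S/(F_\bullet))_\nu=:M$, and $M$ is $t$‑torsion because $M\otimes_R\kk(t)=(\kk(t)[\x]/(F_\bullet))_\nu=0$, the latter being an Artinian complete intersection that vanishes in degrees $\ge\nu$. By the Cayley‑determinant description of the resultant established in the paper (valid over an arbitrary coefficient ring, here $R$), $\mathrm{Res}(F_\bullet)$ equals, up to a unit of $R$, the determinant of the complex $C_\bullet$; and for a bounded complex of finite free modules over a discrete valuation ring that becomes acyclic over the fraction field, the $t$‑adic valuation of this determinant is the alternating sum $\sum_i(-1)^i\mathrm{length}_R H_i(C_\bullet)$. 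Since only $H_0=M$ survives, $\mathrm{ord}_{t=0}\mathrm{Res}(F_\bullet)=\mathrm{length}_R M$.

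\emph{The length bound and the main obstacle.} To finish, $M$ is a finitely generated $t$‑torsion module over the discrete valuation ring $R$, so $\mathrm{length}_R M=\dim_\kk M\ge\dim_\kk(M\otimes_R\kk)=\dim_\kk(\kk[\x]/(\seq_\bullet))_\nu$ by Nakayama, and the stabilization property proved in the paper gives $\dim_\kk(\kk[\x]/(\seq_\bullet))_\nu=N$ at the critical degree; combining the steps yields $\mathrm{mult}_\Seq(\mathrm{Res})=\mathrm{ord}_{t=0}\mathrm{Res}(F_\bullet)=\mathrm{length}_R M\ge N$. The transparency of the Macaulay case rests entirely on the complete‑intersection observation; the hard part---carried out in the body of the paper---is the analogue for the R\'emond resultant of a general multiprojective variety $X$, where the number of polynomials exceeds $\dim X$, the Koszul complex is far from a resolution, and one must instead force a \emph{filter‑regular} sequence so that the higher Koszul homology stays of finite length and negligible in the stable range, together with a stabilization statement for the (local) Hilbert functions ensuring both that the Cayley determinant still computes the resultant and that its reduction modulo $t$ has dimension $N$. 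Establishing these stabilizations uniformly over $R$ is the crux.
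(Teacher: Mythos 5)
Your overall strategy is sound and genuinely different from the paper's: instead of estimating the order of the \emph{universal} resultant at the maximal ideal of $\kk[\u]$ corresponding to $\Seq$ (which is what \cref{thm:main:chardin} does, via a choice of Cayley partitions and the corank bound of \cref{lemma:order:det}), you restrict to a generic line $\Seq+t\mathbf h$, pass to the discrete valuation ring $R=\kk[t]_{(t)}$, observe that $F_\bullet=\Seq+t\mathbf h$ is an honest regular sequence in $R[\x]$ (your dimension count is correct, and Cohen--Macaulayness of $R[\x]$ does the rest), and then read off the $t$-order as $\operatorname{length}_R\,(R[\x]/(F_\bullet))_\nu\ \geq\ \dim_\kk(\kk[\x]/(\Seq))_\nu=N$. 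This is close in spirit to Chardin/Scheja--Storch and is a clean argument in the Macaulay case, where filter-regularity is not needed.

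There is, however, one genuine gap: the sentence ``by the Cayley-determinant description of the resultant established in the paper (valid over an arbitrary coefficient ring, here $R$), $\mathrm{Res}(F_\bullet)$ equals, up to a unit of $R$, the determinant of $C_\bullet$.'' What the paper proves is (a) that over the universal ring $\kk[\u]$ the determinant of $\bK^\nu_\bullet(\uoU,\cdot)$ equals R\'emond's resultant (\cref{ann=det}, \cref{res:koszul=remond}), and (b) that over any Noetherian UFD base a filter-regular sequence has a well-defined ``M-resultant'' $\Kres_R(F_\bullet,R[\x])$ given by that determinant (\cref{def:koszul:res}). It does \emph{not} prove that this latter object coincides, up to $R^\times$, with the image of the universal resultant under the specialization $\u\mapsto$ coefficients of $\Seq+t\mathbf h$ --- and that identification is exactly what your argument needs, since the quantity you must bound is $\mathrm{ord}_{t=0}$ of the specialized polynomial $\Rres(\Seq+t\mathbf h)$, not of $\Kres_R(F_\bullet,R[\x])$. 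Determinants of complexes do not commute with base change for free; here the compatibility can be repaired, e.g.\ by strengthening the genericity of $\mathbf h$: fix bases and partitions for the universal complex over $\kk[\u]$ with square blocks $\phi_i$ of nonzero determinant, and choose $\mathbf h$ so that the restriction of each of the finitely many polynomials $\det\phi_i$ to the line through $\Seq$ in direction $\mathbf h$ is not identically zero; then the same partitions compute the Cayley determinant over $R$ and it literally specializes. But as written you are invoking a base-change statement the paper never establishes --- indeed the paper's route through \cref{thm:main:chardin} (estimate $\ordp$ of the universal determinant directly, with $\p$ the maximal ideal of $\kk[\u]$ attached to $\Seq$) exists precisely to avoid this issue. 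Two minor points: the generic-direction reduction requires $\kk$ infinite (harmless after a field extension, as multiplicity and the hypotheses are insensitive to it), and you should take $\nu$ large enough in the sense of the paper's stabilization results rather than pinning it to Macaulay's critical degree, where the Hilbert function of $\kk[\x]/(\Seq)$ need not yet equal $N$.
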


This is a useful property, 
that can be interpreted either as a multiplicity estimate for the resultant, or as an upper bound for the number of solutions to the nonlinear system given by $\seq_i = 0$ for $i=0,\ld,\r$. 
This theorem can be shown with a variety of methods when $\r=1$: for instance one may use a formula of Poisson \cite{Poisson} that expressess the resultant of $\Seq=(\seq_0,\seq_1)$, up to a nonzero multiplicative constant, as the symmetric polynomial
$$
 \prod_{i=1}^{d_0}\prod_{j=1}^{d_1} (\alpha_{0,i}-\alpha_{1,j})
$$
in the roots of $f_0$ and $f_1$. 
A version of \cref{thm:main:macaulay} was proved by Roy \cite[Theorem 5.2]{Roy2013} for all $\r\geq 1$, under the hypothesis that $k=\C$ and all the polynomials have equal degree. 
If the polynomials $\Seq$ have integer coefficients, there is an interesting arithmetic analogue of \cref{thm:main:macaulay} given by Chardin \cite{Chardin1993}. 
In this setting one often normalizes the irreducible polynomial that defines the Macaulay resultant to have integer coefficients, so the resultant of $\Seq$ is an integer $R$. 
Chardin proves, under suitable hypotheses, that if $p$ is a prime number and the polynomials $\Seq$ have $N$ common zeros modulo $p$, then $p^N\divides R$. 
In fact we observe in \cref{rmk:chardin:enhancement} that with this point of view it is possible to prove a statement that is stronger than Chardin's. 
In \cite{SchejaStorch} Scheja and Storch treat  \cref{thm:main:macaulay} and  its arithmetic analogue as expressions of the same phenomenon, by working on polynomial algebras over integrally closed Noetherian domains and using a sufficiently general notion of ``vanishing''. 
For a somewhat unsimilar study of the multiplicity of the different, which can be thought as a geometric analog of the resultant of a gradient system of equations, we refer to Aluffi and Cukierman \cite{AluffiCukierman}. 
The goal of this paper is to prove a multiplicity estimate for multigraded Chow forms, known also as R\'emond resultants. 
The main purpose is to deduce a corollary with potential applications in Algebraic Independence and Transcendental Number Theory.

The Macaulay resultant is only one of several notions in the rich theory of resultants. 
Many of the approaches to this theory are algebraic and express the resultant by means of determinantal formulas, we refer to \cite{Demazure,Jouanolou:formalisme} without the intent of completeness. 
However most generalizations come from geometric interpretations of the concept of resultants. 
In Algebraic Geometry one has the notion of Chow forms attached to arbitrary projective subvarieties \cite{PhilipponCh6}. 
For comparison, the Macaulay resultant is a Chow form for the projective variety $\PP^\r_\kk$. 
Chow forms are important Intersection-Theoretic invariants and are applied in the theory of Heights \cite{Philippon}. 
Moreover, the Chow forms of toric varieties \cite{toric:Cox,toric:Fulton} are often called sparse resultants  and are important for computational reasons \cite{Sturmfels1994Newton,MatricesElimination}.  
Further generalizations with a geometric flavour, such as mixed resultants, can be found in the monograph of Gelfand-Kapranov-Zelevinski \cite{GKZ94}. 
Multigraded Chow forms, or R\'emond resultants, are attached to sequences of multihomogeneous polynomials $\Seq$ and multiprojective varieties/schemes $V\subseteq \PP^{n_1}_\kk\times\dots\times\PP^{n_\q}_\kk$. 
The R\'emond resultant of $(\Seq,V)$ is a (not necessarily irreducible \cite[Example 1.31]{Nullstellen}) polynomial of the unknown coefficients of $\Seq$ with the following property: it vanishes if and only if the polynomials in $\Seq$ admit a nontrivial common zero in $V$ over an algebraic closure of $\kk$. 
This is a notion of resultants which encapsulates most of the above definitions \cite[Remark 1.39]{Nullstellen}. 

In order to prove the aforementioned multiplicity estimate, we show that the resultants of R\'emond can be computed as Cayley determinants of suitable multigraded Koszul complexes. 
This addresses a gap in the literature and it has other consequences. 
For example, it implies that the multiprojective resultants satisfy several classical formulas, such as the one that expresses the resultant as a gcd of the maximal minors of the Sylvester map \cite[Theorem 34, Appendix A]{GKZ94}.

A notion of resultant for multihomogeneous polynomials is important in applications, especially when the set of variables $\x=(x_0,\ld,x_\r)$ decomposes naturally in independent subcollections $\x=\x^{(1)}\cup\ld\cup\x^{(q)}$.   
This is often the case, for example, in Transcendental and Algebraic Independence Theory, for which we refer to the book \cite{AlgebraicIndependence}. 
In this theory one typically starts with a tuple of numbers $\underline \xi$ that one wishes to prove algebraically independent (or $\Q$-linearly independent, or else), and then takes  suitable combinations of $\underline\xi$ to fabricate a set of points $\Sigma\subseteq G(\C)$ of an algebraic group $G=G_1\times\dots\times G_q$. 
One then assumes that the numbers $\underline\xi$ are not algebraically independent and constructs so-called auxiliary functions $\seq_i$ that vanish with high order at all points of $\Sigma$, in hope to find a contradiction.  
For a detailed account on this method, we refer to the book of Waldschmidt \cite{Waldschmidt2000}. 
If $\r$ is the dimension of $G$ and the auxiliary functions $\Seq=(\seq_0,\ld,\seq_\r)$ are polynomials, one may consider their resultant. 
Since the polynomials $\seq_i$ vanish simultaneously on $\Sigma$ with high multiplicity, it follows from a suitable version of \cref{thm:main:macaulay} that their resultant vanishes with high multiplicity as well: we explore this matter in more detail in \cref{sec:application}. 
This construction might be seen as a way to package the information of several auxiliary polynomials $\Seq$ into a single ``larger'' auxiliary polynomial, namely their resultant. 
For examples of how the information on the multiplicity of the resultants is used to derive Diophantine results, in the context of interpolation on the commutative algebraic group $\mathbb G_a\times\mathbb G_m$, we refer to \cite{Roy2013,Ghidelli2015,Nguyen2016}.

\section*{Plan of the paper and methodology} 
\label{methodology}

The paper is subdivided as follows. 
In \cref{sec:multi} we review the basic definitions and results we need from multigraded Commutative  Algebra and multiprojective Geometry. 
In \cref{sec:res} we introduce the multigraded Koszul complex, we define the resultant as the determinant of sufficiently high multidegree slices of this complex, and we show that this definition coincides with that of R\'emond. 
In \cref{sec:main} we prove our main multiplicity estimates and finally in \cref{sec:application} we present an application to the interpolation theory on commutative algebraic groups. 

As we already remarked, the resultant is an algebraic invariant with a geometric interpretation. 
The most classical versions of the theory of resultants are formulated for polynomial algebras $A[\x]$, whereas some abstract geometric resultants are attached to $\mcl O_X$-vector bundles, and their twists, over $\r$-dimensional schemes \cite{GKZ94}. 
One may find a middle ground by considering the ``M-resultant'' attached to an $A[\x]$-module. 
%
%
This unorthodox approach is the one that we adopt in this paper, see \cref{rmk:hypotheses} for a discussion on the hypotheses on $A[\x]$ and $M$. 
One reason for this choice is the fact that the module ``$M$'' that lurks under the definition of a (multigraded) Chow form does not necessarily have the structure of a polynomial algebra, if the underlying scheme is not a toric variety. 
Nevertheless, the multihomogeneous components of this multigraded module are free (cfr. \cref{sec:multi:poly}), and this hypothesis turns out to be sufficient to guarantee the validity of our constructions. 
Therefore it is natural and not more difficult to allow $M$ to be an essentially arbitrary module with this property, instead of restricting it only to the modules that arise in the construction of Chow forms. 
See \cref{rmk:recovery} for the recovery of the classical theory and the theory of R\'emond,  and see  \cref{rmk:schemes} for a comparison with the geometric generalizations. 

The algebraic theory of resultants is intimately related with the notion of regular sequences, and this reflects their fundamental intersection-theoretic nature. 
In this paper we use instead the more general notion of filter-regular elements: these are like regular elements that ``disregard'' the irrelevant associated primes of the module (\cref{filter-regular:relevant}). 
Filter-regular elements and sequences are thus natural objects in multigraded Commutative Algebra and in fact it turns out that the theory of regular sequences alone is ill-suited for developing the theory of multigraded resultants/Chow forms. 
The geometric reason is that the ``multiaffine cone'' of a multiprojective variety may have singularities at the ``multi-vertex'' that are not Cohen-Macaulay: this may prevent the very existence of regular sequences with right length, see \cref{filter:regular:why}. 

Let us briefly discuss the definition of the M-resultant attached to an $A[\x]$-module $M$ and a filter-regular sequence $\Seq$. 
One approach, adopted e.g. by R\'emond \cite{RemondCh5},  is to define it as the annihilant form (or content, see \cref{def:ann}) of any multihomogeneous component of module $M/(\Seq)M$ with sufficiently high multidegree.   
Notice that the $M/(\Seq)M$ is the cokernel of the multigraded linear map
\begin{equation}\label{def:sylvester}
\begin{aligned}
\de_1 : M\times \dots \times M 	&\to M \\
(m_0, \dots, m_\r) 			&\mapsto \seq_0m_0 + \dots + \seq_\r m_\r,
\end{aligned}	
\end{equation}
known as the Sylvester map. 
In particular the divisor of the resultant detects the primes $\p\subseteq A$ for which every multigraded slice $\de_1^\nu$ with sufficiently high multidegree $\nu$ fails to be locally surjective at $\p$. 
The Sylvester map can be completed to the left to form the multigraded Koszul complex $\bK_\bullet=\bK_\bullet(\Seq,M)$. 
Another approach to the construction of the resultant is to define it as the Cayley determinant of a sufficiently high multidegree slice of $\bK_\bullet$. 
In particular the resultant detects when localizations of $\bK_\bullet$ fail to be exact. 

The results of the paper are organized as follows. 
The basic properties of filter-regular elements are derived all throughout \cref{sec:multi}, and in \cref{northcott} we verify that the Koszul complex $\bK_\bullet(\Seq,M)$ is acyclic if $\Seq$ is a filter-regular sequence. 
In \cref{div:stab} we prove that the divisor $\div_\ufd((M/(\Seq)M)_\nu$ stabilizes for $\nu$ large enough. 
The idea for proving this key stabilization property is that the multiplicity of $(M/(\Seq)M)_\nu$ at some prime should be seen as a local Hilbert function, as $\nu$ varies. 
Our approach is therefore  different than the one of R\'emond \cite[Theorem 3.3]{RemondCh5}, that uses elimination theory, and than the usual cohomological approach, see \cref{rmk:cohomology}. 
In \cref{ann=det} and \cref{res:koszul=remond} we prove that the two definitions of the multigraded M-resultant, respectively via the annihilant and via the determinant of the Koszul complex, coincide. 
In \cref{thm:main:chardin} we prove the main ``$\p$-adic'' multiplicity estimate for M-resultants, and in \cref{thm:main:roy:CM} we deduce a multiplicity estimate for the R\'emond resultant, in a form more suitable for geometrical applications. 
In \cref{sec:application} we introduce the theory of interpolation on a commutative algebraic group embedded in multiprojective space, 
we describe the primary decomposition of the so-called interpolation ideal and we discuss its relation with the surjectivity of the evaluation map. 
Finally, in \cref{thm:res:estimate} we state our main corollary, which is a lower bound on the multiplicity of the Chow form of the group at a sequence of interpolation polynomials.  

\addtocontents{toc}{\SkipTocEntry} \section*{Acknowledgements}

I would like to gratefully acknowledge my supervisor Prof.Damien Roy for his wholehearted support, his careful reading of this paper and his valuable advices. 
I also thank an anonymous referee for bibliographical suggestions.  
This work was supported in part by the full International Scholarship of the University of Ottawa and the FGPS, in part by the International Doctoral Scholarship 712230205087, and in part by NSERC. 

\tableofcontents

\section{Preliminaries on multigraded commutative algebra}
\label{sec:multi}


\subsection{Multigraded rings and modules}
\label{sec:multi:def}

Let $\N_+$ denote the set of positive integers and let $\q\in\N_+$ be given.  We say that a ring $R$ is \emph{multigraded} (or $\N^\q$-graded if $\q$ may not be clear from the context) if it admits a decomposition $R=\bigoplus_{\d\in\N^\q} R_\d$ such that  $R_{\mbf{a}}R_{\mbf{b}}\subseteq R_{\mbf{a}+\mbf{b}}$ for every $\mbf{a},\mbf{b}\in\N^\q$. 
An $R$-module $M$ is multigraded if it decomposes as $M=\bigoplus_{\d\in\N^q} M_\d$ and  $R_\mbf{a}M_\mbf{b}\subseteq M_{\mbf{a}+\mbf{b}}$ for every $\mbf{a},\mbf{b}\in\N^\q$.
Every element of $\N^\q$ is called a \emph{multidegree}. For every $p=1,\ldots,\q$ we denote by $\e_p$ the multidegree corresponding to the $p$-th canonical basis vector of $\N^\q$, i.e. such that $\e_{p,j}=\delta_{p,j}$, where $\delta$ is the Kronecker symbol. We also let $\zero$ and $\uno$ to be the elements of $\N^\q$ with all the components equal to 0 and 1 respectively. We introduce on $\N^\q$ the componentwise partial order $\leq$, such that $\d^{(1)}\leq\d^{(2)}$ if and only if $\d^{(1)}_i\leq\d^{(2)}_i$ for every $i=1,\ldots,\q$. Then we state that a property holds \emph{for $\d\in\N^\q$ large enough} if there exists $\d^{(0)}\in\N^\q$ such that the property holds for every $\d\in\N^\q$ satisfying $\d\geq\d^{(0)}$. For each $\d\in\N^\d$ we say that $M_\d$ is a multihomogeneous component of $M$ of multidegree $\d$ and we call every element of $M_\d$ a multihomogeneous element of $M$ of multidegree $\d$. 
A multigraded $R$-module $M$ is \emph{eventually zero} if $M_\d=\{0\}$ for $\d$ large enough.

\begin{remark}
In the case $\q=1$, the notions of multigraded rings and modules coincide with the more common notions of graded rings and modules. The reader interested only in the graded case can read all this article by replacing everywhere the words multigraded, multihomogeneous and multiprojective with graded, homogeneous and projective respectively. 
\end{remark}


\subsection{Multihomogeneous submodules and relevant ideals}
\label{sec:multi:relevant}

We say that an $R$-submodule $N\subseteq M$ is multihomogeneous if it is generated by multihomogeneous elements or, equivalently, if $N=\bigoplus_{\d\in\N^\q} N\cap M_\d$.
Given a multihomogeneous submodule $N$ of $M$ we have induced $R$-module structures on $N$ and $M/N$, respectively with $N_\d=N\cap M_\d$ and $(M/N)_\d=M_\d/N_\d$.
An ideal $I\subseteq R$ is multihomogeneous if it is a multihomogeneous submodule of $R$.
We say that a multihomogeneous submodule $N\subseteq M$ is \emph{irrelevant} if $N_\d = M_\d$ for $\d$ large enough or, equivalently, if $M/N$ is eventually zero. A multihomogeneous submodule $N\subseteq M$ is \emph{relevant} if it is not irrelevant. A multihomogeneous ideal $I\subseteq R$ is relevant (irrelevant) if $I$ is a relevant (irrelevant) submodule of $R$.

If $\mcl{F}\subseteq R$ is a family of multihomogeneous elements of $R$, we denote by $(\mcl{F})$ the multihomogeneous ideal generated by them. If $N\subseteq M$ is a multihomogeneous submodule of a multigraded $R$-module $M$, if $\mcl{N}\subseteq M$ is any family of multihomogeneous elements of $M$ and if $\mcl{F}$ is any family of multihomogeneous elements of $R$, then the colon submodule (module quotient) $(N:_M\mcl{F}):=\{m\in M:\, fm\in N\ \forall f\in \mcl{F}\}$ is a multihomogeneous submodule of $M$ and the colon ideal (ideal quotient) $(N:_R\mcl{N}):=\{r\in R:\, r\eta\in N\  \forall \eta\in \mcl{N}\}$ is a multihomogeneous ideal of $R$. In particular $\op{Ann}_R(M):=(0:_RM)$ is a multihomogeneous ideal.

Given an $R$-module $M$ we denote by $\Ass_R(M)$ the set of associated primes of $M$ in $R$. If $M$ is a multigraded $R$-module and $\mfk p\in\Ass_R(M)$, then $\mfk p$ is a multihomogeneous prime ideal of $R$ and is equal to $(0:_Mm)$ for some multihomogeneous element $m\in M$ \cite[Lemme 2.5]{RemondCh5}.
If $R$ is Noetherian and $M$ is a finitely generated $R$-module, then $\Ass_R(M)$ is a finite set.


\subsection{Filter-regular sequences and f-depth}
\label{sec:multi:regular}

Given a multigraded ring $R$, a multigraded $R$-module $M$ and a multihomogeneous element $\seq\in R$, we say that $\seq$ is \emph{filter-regular} for $M$ if the colon submodule $(0:_M \seq)$ is eventually zero or, equivalently, if the multiplication by $\seq $ induces injective maps $M_\nu\xrightarrow{\cdot \seq\,} M_{\nu+\d}$ for $\nu$ large enough, where $\d$ is the multidegree of $\seq$. 
In particular, if $M$ is eventually zero then  any multihomogeneous $\seq\in R$ is filter-regular for $M$.

A collection $\Seq=(\seq_0,\ld,\seq_\r)$ of multihomogeneous elements of $R$ is a \emph{filter-regular sequence} for $M$ if  $\seq_i$ is filter-regular for the module $M/(\seq_0,\ld,\seq_{i-1})M$ for $i=0,\ld,\r$.
For every multihomogeneous ideal $J$ of $R$ and every multigraded module $M$ we define $\fgr(J,M)\in\N\cup\{\infty\}$ to be the supremum of all the $\r\in\N$ such that there exists a filter regular sequence $\Seq=(\seq_0,\ld,\seq_{\r-1})$ for $M$ with $\seq_i\in J$ for  $i=0,\ld,\r-1$. 
The following fact is easy to prove. See for example \cite[Proposition 2.5]{Viet2013}.

\begin{proposition}\label{filter-regular:relevant}
Let $R$ be a Noetherian multigraded ring, $M$ a finitely generated multigraded $R$-module and $\seq\in R$ a multihomogeneous element.
Then $\seq$ is filter-regular for $M$ if and only if is not contained in any \emph{relevant} associated prime of $M$ in $R$. 
\end{proposition}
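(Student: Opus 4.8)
The plan is to characterize filter-regularity in terms of the associated primes, using the standard fact that for a finitely generated module $M$ over a Noetherian ring $R$, an element $\seq\in R$ is a zerodivisor on $M$ if and only if $\seq$ lies in some associated prime of $M$, i.e. the zerodivisors form the union $\bigcup_{\p\in\Ass_R(M)}\p$. Since $\Ass_R(M)$ is finite, by prime avoidance $\seq$ is contained in this union if and only if it is contained in one of the associated primes. The key observation is that $(0:_M\seq)$ being \emph{eventually zero} is weaker than $(0:_M\seq)=0$: it only forces $\seq$ to avoid those associated primes $\p$ for which $(0:_Mm)=\p$ with $m$ of high multidegree, and — recalling from \cref{sec:multi:relevant} that each $\p\in\Ass_R(M)$ equals $(0:_Mm)$ for a single multihomogeneous $m$ — these are exactly the \emph{relevant} primes.

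More precisely, first I would recall that $(0:_M\seq)=\bigoplus_\nu\ker(M_\nu\xrightarrow{\cdot\seq}M_{\nu+\d})$ is a multihomogeneous submodule of $M$, so it is eventually zero precisely when the maps $M_\nu\xrightarrow{\cdot\seq}M_{\nu+\d}$ are injective for $\nu$ large; this is the equivalence already stated in the definition. For the forward direction, suppose $\seq$ lies in some relevant associated prime $\p$ of $M$. Write $\p=(0:_Mm)$ with $m$ multihomogeneous of multidegree, say, $\mu$. Then $\seq m=0$, so $0\neq m\in(0:_M\seq)_\mu$; moreover, since $\p$ is relevant, $R/\p$ is not eventually zero, and the submodule $Rm\cong R/\p$ of $M$ is contained in $(0:_M\seq)$, so $(0:_M\seq)$ has nonzero components in arbitrarily large multidegrees (every multidegree $\geq$ some translate of $\mu$, using that $(R/\p)_\d\neq 0$ for $\d$ large). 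Hence $(0:_M\seq)$ is not eventually zero, and $\seq$ is not filter-regular.

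For the converse, suppose $\seq$ is not filter-regular, so $N:=(0:_M\seq)$ is a nonzero multihomogeneous submodule that is not eventually zero; I want to produce a relevant associated prime containing $\seq$. Since $N\neq 0$ and $R$ is Noetherian with $M$ finitely generated, $\Ass_R(N)$ is a nonempty finite subset of $\Ass_R(M)$, and every $\p\in\Ass_R(N)$ contains $\operatorname{Ann}_R(N)\ni\seq$ (as $\seq N=0$), so $\seq$ lies in each of these primes. It remains to check that at least one $\p\in\Ass_R(N)$ is relevant: this follows because a finitely generated multigraded module with all associated primes irrelevant must itself be eventually zero (an irrelevant prime $\p$ has $R/\p$ eventually zero, and $N$ embeds into a finite direct sum of shifted copies of quotients $R/\p_i$ via a filtration with factors $R/\p_i$, $\p_i\in\Ass$, so $N$ eventually zero — contradicting that $N$ is not eventually zero). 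Therefore some $\p\in\Ass_R(N)\subseteq\Ass_R(M)$ is relevant and contains $\seq$, as desired.

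The main obstacle is the last point in the converse: establishing that a finitely generated multigraded module all of whose associated primes are irrelevant is eventually zero. This requires the multigraded prime filtration $0=N_0\subset N_1\subset\cdots\subset N_t=N$ with $N_j/N_{j-1}\cong (R/\p_j)(\text{shift})$ and $\p_j$ multihomogeneous primes — which exists for finitely generated multigraded modules over Noetherian multigraded rings by the same argument as in the graded case — together with the fact that the $\p_j$ need not all be associated primes, but every associated prime appears among them and, conversely, every $\p_j$ contains some associated prime; combined with the hypothesis this forces each $R/\p_j$ to be eventually zero, hence $N$ is eventually zero by induction on $t$. This is essentially the content cited as \cite[Proposition 2.5]{Viet2013}, so I would either reproduce this short filtration argument or simply invoke that reference.
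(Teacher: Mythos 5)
Your proof is correct. Note that the paper does not give its own argument for \cref{filter-regular:relevant}: it records the statement as an easy fact and refers to Viet's Proposition 2.5, so there is no internal proof to compare against; what you wrote is precisely the standard argument such references use. Both directions are sound: the forward one via $\p=(0:_Mm)$ with $m$ multihomogeneous (the fact quoted in \cref{sec:multi:relevant}), so that the shifted copy $Rm\cong R/\p$ sits inside $(0:_M\seq)$ and is not eventually zero; and the converse via $\Ass_R\bigl((0:_M\seq)\bigr)\subseteq\Ass_R(M)$, the observation $\seq\in\Ann_R\bigl((0:_M\seq)\bigr)$, and the key lemma that a finitely generated multigraded module all of whose associated primes are irrelevant is eventually zero, which your multihomogeneous prime filtration establishes correctly (each filtration prime contains a minimal, hence associated, prime of the module, so each factor is eventually zero). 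One small caution: in the forward direction the parenthetical claim that $(R/\p)_\d\neq 0$ for all large $\d$ is neither needed nor true for a general multigraded $R$: relevance of $\p$ only says $R/\p$ is not eventually zero, i.e.\ it has nonzero components in arbitrarily large multidegrees, and the support of the grading may lie in a proper submonoid of $\N^\q$ (e.g.\ $R=\kk[t]$ with $\deg t=(1,1)$ and $\p=(0)$). Since all you actually use is that $Rm$ is not eventually zero, which is the definition of relevance, the argument stands; simply drop or weaken that parenthesis.
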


The above proposition is useful to prove the existence of filter-regular elements, expecially when coupled with the following multihomogeneous version of the Prime Avoidance lemma.

\begin{lemma}\label{prime:avoidance}
Let $R$ be a Noetherian multigraded ring, let $\p_1,\ld,\p_s$ be relevant multihomogenous primes of $R$ and let $I$ be a multihomogenous ideal of $R$ with $I\not\subset\p_i$ for $i=1,\ld,s$. Then for every $\nu\in\N^\q$ large enough there exists $\seq\in I$ multihomogeneous of multidegree $\nu$ such that $\seq\not\in\p_i$ for $i=1,\ld,s$.
\end{lemma}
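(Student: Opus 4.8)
The plan is to reduce the statement to the standard (ungraded) Prime Avoidance lemma applied degree by degree, after discarding redundant primes and exploiting relevance to produce elements of the prescribed multidegree. First I would remove any prime $\p_i$ contained in some other $\p_j$, so that we may assume the list $\p_1,\ld,\p_s$ has no containments; this does not affect the conclusion, since an element avoiding the remaining primes avoids all of them. Next, for each $i$ I would use the hypothesis $I\not\subset\p_i$ together with the fact that both $I$ and $\p_i$ are multihomogeneous: there exists a multihomogeneous element $g_i\in I\setminus\p_i$, say of multidegree $\mu^{(i)}$. The key point is that, because $\p_i$ is \emph{relevant}, the quotient $R/\p_i$ is not eventually zero, so $(R/\p_i)_\nu\neq 0$ for infinitely many $\nu$; in fact, since $R/\p_i$ is a multigraded domain, once $(R/\p_i)_\nu\neq 0$ one can multiply to land in higher multidegrees, and a standard argument shows $(R/\p_i)_\nu\neq 0$ for all $\nu$ large enough (bounded below by some $\nu^{(i)}$). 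I would choose $\nu^{(0)}\geq \nu^{(i)}+\mu^{(i)}$ for all $i$, so that for every $\nu\geq\nu^{(0)}$ there is a multihomogeneous $h_i$ of multidegree $\nu-\mu^{(i)}$ with $h_i\notin\p_i$, whence $g_ih_i\in I$ is multihomogeneous of multidegree exactly $\nu$ and lies outside $\p_i$.

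With these building blocks in hand, fix $\nu\geq\nu^{(0)}$ and let $J=I_\nu\cdot R$ be the ideal... actually, more cleanly, I would consider the additive subgroup $I_\nu\subseteq R_\nu$ and argue as in the proof of ordinary Prime Avoidance: suppose for contradiction that $I_\nu\subseteq\p_1\cup\ld\cup\p_s$. Since each $g_ih_i\in I_\nu$ avoids $\p_i$, none of the $\p_i$ is superfluous, and one runs the classical inductive argument producing an element of $I_\nu$ outside every $\p_i$ — the only subtlety is that the usual proof forms sums and products of elements of $I$, and here we must keep everything inside the single graded piece $I_\nu$. This is arranged precisely by our construction: all the $g_ih_i$ have the \emph{same} multidegree $\nu$, so their $R_\zero$-linear combinations (and the requisite products with elements of $\p_j\cap R_\zero$... ) stay in $I_\nu$. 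Concretely, I would set $a_i=g_ih_i\in I_\nu$, and then the element $a_1+a_2a_3\cdots a_s + \cdots$ built in the standard Prime Avoidance proof — whose every summand is a product of $a_i$'s possibly times elements of various $\p_j$, rescaled to lie in degree $\nu$ — gives the desired $\seq\in I_\nu\setminus\bigcup_i\p_i$.

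The cleanest route, to avoid fiddling with degrees in the inductive step, is the following: consider the $R_\zero$-submodule $W=\sum_i R_{\nu-\mu^{(i)}}\,g_i\subseteq I_\nu$. It suffices to find $\seq\in W$ with $\seq\notin\p_i$ for all $i$. For each $i$, the set $W\cap\p_i$ is a proper $R_\zero$-submodule of $W$ (proper because $g_ih_i\in W\setminus\p_i$), and $W\cap\p_i$ is actually an $R_\zero$-submodule. If the residue fields involved are infinite this is immediate (a vector space is not a finite union of proper subspaces); in general one invokes the prime-avoidance-for-modules statement, which says a submodule contained in a finite union of primes is contained in one of them — but here we want the negation at the level of a fixed module $W$, so I would instead directly quote the standard fact that if $J$ is an ideal and $J\subseteq\p_1\cup\cdots\cup\p_s$ with the $\p_i$ prime, then $J\subseteq\p_i$ for some $i$, applied to the ideal $J'$ generated by $W$ inside $R$: if no single $\p_i$ contains $W$ then $J'\not\subseteq\p_i$ for each $i$, hence (ordinary Prime Avoidance) $J'\not\subseteq\bigcup\p_i$, so some element of $J'$, and after taking multihomogeneous components some element of $W=J'_\nu$, avoids all the $\p_i$.

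The main obstacle I anticipate is exactly this bookkeeping of multidegrees: the textbook Prime Avoidance proof manipulates elements freely, and one must check that everything can be carried out inside the single slice $I_\nu$. The relevance hypothesis on the $\p_i$ is precisely what makes this possible — it guarantees $R_{\nu-\mu^{(i)}}\not\subseteq\p_i$ for $\nu$ large, so that the $g_i$ can be "transported" to multidegree $\nu$ without falling into $\p_i$. Once the reduction to a fixed $\nu$ is in place, the remaining combinatorics is the classical argument verbatim.
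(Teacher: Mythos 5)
Your reduction to a fixed multidegree is sound and is exactly the paper's: after discarding inclusions among the $\p_i$, pick a multihomogeneous $g_i\in I\setminus\p_i$ and use relevance of $\p_i$ (with Noetherianity) to find $h_i$ of complementary multidegree with $h_i\notin\p_i$, so that $g_ih_i\in I_\nu\setminus\p_i$ for all $\nu$ large enough. The genuine gap is in the combination step. The paper does not take an arbitrary $g_i\in I\setminus\p_i$: it takes $x_i$ multihomogeneous in $I\prod_{j\neq i}\p_j$ with $x_i\notin\p_i$ (possible because a prime containing a product of ideals contains one of the factors, and $\p_i$ contains neither $I$ nor any $\p_j$, $j\neq i$). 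This forces the cross-condition $x_i\in\p_j$ for every $j\neq i$, and then $f=\sum_i x_iy_i$ works immediately: modulo $\p_i$ all summands with $j\neq i$ vanish and the $i$-th does not. Your $a_i=g_ih_i$ only avoid ``their own'' prime, and none of your substitutes for the missing cross-condition survives scrutiny.

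Concretely: (i) the element ``$a_1+a_2a_3\cdots a_s$, rescaled to lie in degree $\nu$'' cannot be formed, since a product of $k\geq 2$ elements of multidegree $\nu$ has multidegree $k\nu$ and there is no way to rescale it back down to $\nu$; (ii) the union-of-proper-subspaces argument needs $R_\zero$ to be an infinite field, whereas the lemma is stated for arbitrary Noetherian multigraded $R$ and is invoked in the paper with $R_\zero$ a possibly finite residue field (via \cref{northcott} inside the proof of \cref{thm:main:chardin}) or even an Artinian ring (in \cref{div:stab}); (iii) the final fix is a non sequitur: if $z$ lies in the ideal $J'$ generated by $W$ and avoids all the $\p_i$, then for each $i$ \emph{some} multihomogeneous component of $z$ avoids $\p_i$, but possibly a different component for each $i$, so no single component in $W=J'_\nu$ need avoid them all. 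In fact the inference ``$W\not\subseteq\p_i$ for all $i$ implies some element of $W$ avoids every $\p_i$'' is false for an $R_\zero$-submodule of a single graded piece: in $R=\mathbb{F}_2[x,y]$ with $\p_1=(x)$, $\p_2=(y)$, $\p_3=(x+y)$ and $W=R_1$, no $\p_i$ contains $W$, yet every element of $R_1$ lies in one of the three primes, while $J'=(x,y)$ does contain elements (of degree $2$) outside all three. This example even satisfies all the hypotheses your combination step uses (each prime is avoided by some element of $W$), so that step is refuted rather than merely incomplete; the repair is precisely the product trick above, which is what the paper's proof does.
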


\begin{proof}
We may assume there are no inclusions among the $\p_i$. 
For $i=1,\ld,s$ let $J_i:={I\prod_{j\neq i}\p_j}$.
It is well known that if a prime ideal contains the product of some ideals, then it must contain one of them. Moreover, the product of multihomogenoeus ideals is multihomogeneous.
Therefore, there is $x_i\in J_i$ multihomogeneous of multidegree, say, $\d^{(i)}$, such that $x_i\not\in\p_i$. 
Since $\p_i$ is relevant and $R$ is Noetherian, we have that for all $\nu\in\N^\q$ large enough there is $y_i\in R_{\nu-\d^{(i)}}$ with $y_i\not\in\p_i$.
Then $f=\sum_{i=1}^s x_iy_i$ has the required property.
\end{proof}


\begin{remark}
Filter-regular sequences are related to superficial sequences and (mixed) multiplicity systems, and are widely used in the study of Rees algebras and Hilbert functions of local rings. See for example \cite{Trung2010}, \cite{Rossi2007}, \cite{Viet2012} or \cite{Kirby}.
\end{remark}


\subsection{Multigraded polynomial rings and componentwise free modules}
\label{sec:multi:poly}

Given an integer $\q\in\N_+$ as in \cref{sec:multi:def} and positive natural numbers $n_1,\ldots,n_\q\in \N_+$, we introduce the set of variables $\x=(x_{p,i})_{p=1,\ld,\q,\, i=0,\ld, n_p}$ and for every $p=1,\ld,\q$ we denote by $\x_p$ the subcollection $\x_p=(x_{p,0},\ld,x_{p,n_p})$.
If $\ufd$ is any ring, we denote by $\ufd[\x]$ the polynomial ring with coefficients in $\ufd$ and variables in $\x$. We consider on $\ufd[\x]$ the unique $\N^\q$-graded ring structure such that every nonzero constant $a\in \ufd$ has multidegree $\zero$ and that $x_{p,i}$ is multihomogeneous of multidegree $\e_p$, for every $p=1,\ld,\q$ and $i=0,\ld, n_p$.
We define a \emph{componentwise free $\ufd[\x]$-module} to be a finitely generated multigraded $\ufd[\x]$-module $M$ whose multihomogeneous components $M_\d$ are free $\ufd$-modules of finite ranks.
%


\subsection{The Hilbert polynomial and the relevant dimension}
\label{sec:multi:hil}

Given a ring $\ufd$, we denote by $Mod_\ufd$ the category of finitely generated $\ufd$-modules. An \emph{additive integer-valued function} on $Mod_\ufd$ is a mapping $\lambda:\, Mod_\ufd\to\Z$ satisfying $\lambda(M)=\lambda(M')+\lambda(M'')$ for every short exact sequence $0\to M'\to M\to M''\to 0$ in $Mod_\ufd$.
If $\field$ is a field, $R$ is an Artinian ring and $\ufd$ is an integral domain with field of fractions $\field$, then the dimension $\dim_\field(-)$, the length $\ell(-)$ and the \emph{generic rank} $\op{rank}_\ufd(-)=\dim_\field(-\otimes_\ufd\field)$ are additive integer valued functions respectively on $Mod_\field$, $Mod_R$ and $Mod_\ufd$.

If $\ufd$ is a Noetherian ring and $M$ is a finitely generated multigraded $\ufd[\x]$-module, then every multihomogeneous component $M_\d$ is a finitely generated $\ufd$-module.
If  $\lambda$ is an additive integer-valued function on $Mod_\ufd$, we introduce the \emph{Hilbert function} $h_{M,\lambda}:\N^\q\to \Z$ given by $h_{M,\lambda}(\d)=\lambda(M_\d)$.

\begin{proposition}\label{hilbert:polynomial}
Let $\ufd$ be a field, an Artinian ring or a Noetherian integral domain, and let $\lambda(-)$ be $\dim_\ufd(-)$, $\ell(-)$ or $\op{rank}_\ufd(-)$ respectively, as above.
Then for every finitely generated $\ufd[\x]$-module $M$ there is a unique polynomial $P_{M,\lambda}$ in $\q$ variables and with coefficients in $\Q$, called the \emph{Hilbert polynomial}, such that $h_{M,\lambda}(\d)=P_{M,\lambda}(\d)$ for every sufficiently large $\d\in\N^\q$. 
\end{proposition}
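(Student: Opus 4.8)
The plan is to reduce the multigraded statement to the classical one-variable Hilbert–Serre theorem by an induction that peels off one grading variable at a time, and then to handle the three cases ($\ufd$ a field, Artinian, or a Noetherian domain) uniformly by exploiting that $\lambda$ is additive on short exact sequences. First I would fix a presentation: since $M$ is a finitely generated $\ufd[\x]$-module, choosing multihomogeneous generators exhibits $M$ as a quotient of a finite direct sum of shifted copies of $\ufd[\x]$, and additivity of $\lambda$ together with the shift-invariance of the grading reduces the problem, via the usual dévissage, to proving that $h_{\ufd[\x],\lambda}(\d)$ is eventually polynomial — or more robustly, that the class of modules with eventually-polynomial Hilbert function is closed under extensions and contains enough building blocks. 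For $M=\ufd[\x]$ one has $M_\d$ free over $\ufd$ of rank $\prod_{p=1}^\q \binom{d_p+n_p}{n_p}$, so $\lambda(M_\d)=\lambda(\ufd)\cdot\prod_{p=1}^\q\binom{d_p+n_p}{n_p}$, which is visibly a polynomial in $\d$; the only subtlety is that in the Artinian case $\lambda=\ell$ is finite on $\ufd$, and in the domain case $\op{rank}_\ufd(\ufd)=1$, so $\lambda(\ufd)$ is a well-defined integer in every case.

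For a general finitely generated $M$, I would argue by Noetherian induction on $M$ using a multihomogeneous prime filtration $0=M^{(0)}\subset M^{(1)}\subset\cdots\subset M^{(t)}=M$ with successive quotients $M^{(j)}/M^{(j-1)}\cong (\ufd[\x]/\mfk p_j)(\text{shift})$ for multihomogeneous primes $\mfk p_j$; such a filtration exists because $\ufd[\x]$ is Noetherian and every associated prime of a multigraded module is multihomogeneous (cited already in \S\ref{sec:multi:relevant}). By additivity of $\lambda$, $h_{M,\lambda}$ is the sum of the (shifted) $h_{\ufd[\x]/\mfk p_j,\lambda}$, and a finite sum of eventually-polynomial functions is eventually polynomial (one takes the coordinatewise maximum of the thresholds), so it suffices to treat $M=\ufd[\x]/\mfk p$. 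Here I would single out one grading variable, say the block $\x_1$, and pick a variable $x_{1,i}\notin\mfk p$ if one exists; multiplication by $x_{1,i}$ gives a short exact sequence $0\to (M/0\!:\!x_{1,i})(-\e_1)\to M\to M/x_{1,i}M\to 0$, and since $x_{1,i}$ is a nonzerodivisor on $\ufd[\x]/\mfk p$ this becomes $0\to M(-\e_1)\to M\to M/x_{1,i}M\to 0$, whence $h_{M,\lambda}(\d)-h_{M,\lambda}(\d-\e_1)=h_{M/x_{1,i}M,\lambda}(\d)$. The module $M/x_{1,i}M$ is annihilated by one more variable, so an induction on the number of variables (with the base case being a module supported in finitely many multidegrees in the $\x_1$-direction, handled by the other blocks) shows the right-hand side is eventually polynomial, and then a standard finite-difference argument — a function $\N^\q\to\Z$ whose $\e_1$-difference is eventually a polynomial is itself eventually a polynomial — concludes. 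If instead \emph{every} variable in some block $\x_p$ lies in $\mfk p$, then $M_\d=0$ once $d_p>0$, and $M$ is effectively a module over a polynomial ring in fewer blocks, so the induction applies directly.

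The uniqueness of $P_{M,\lambda}$ is immediate: two polynomials in $\q$ variables agreeing on all sufficiently large $\d\in\N^\q$ agree as polynomials, since $\{\d:\d\geq\d^{(0)}\}$ is Zariski-dense in $\mathbb{A}^\q$. The main obstacle I anticipate is purely bookkeeping rather than conceptual: one must track the multidegree shifts carefully through the prime filtration and through the peeling of each variable, and one must make sure the "eventually" thresholds $\d^{(0)}$ combine correctly across the finitely many filtration steps and across the induction on the number of variables — but because there are only finitely many steps and $\N^\q$ is directed under $\leq$, taking coordinatewise maxima always works. A secondary point requiring a line of care is the base case of the variable-count induction in the case $\ufd$ is a Noetherian domain rather than a field: one uses that $\op{rank}_\ufd(-)=\dim_{\field}(-\otimes_\ufd\field)$ is additive (stated in the excerpt), so the entire argument may be run after tensoring with $\field$, reducing the domain case to the field case over $\field[\x]$.
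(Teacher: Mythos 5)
Your route is genuinely different from the paper's: the paper disposes of the proposition by observing that the Artinian case contains the field case, that the field case yields the domain case (via $\op{rank}_\ufd(-)=\dim_\field(-\otimes_\ufd\field)$), and by citing the literature (van der Waerden, R\'emond, Maclagan--Smith, Trung) for the field and Artinian cases, whereas you attempt a self-contained d\'evissage proof. Most of your skeleton (multihomogeneous prime filtration, the case where a whole block of variables lies in $\mfk{p}$, the reduction of the domain case to the field case by tensoring with $\field$, and the Zariski-density uniqueness argument) is sound. However, there is one genuine gap: the finite-difference lemma you invoke is false for $\q\geq 2$. A function $f\colon\N^\q\to\Z$ whose $\e_1$-difference is eventually a polynomial need not be eventually a polynomial: for $\q=2$ take $f(d_1,d_2)=2^{d_2}$, whose $\e_1$-difference is identically zero. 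Concretely, integrating the relation $h_{M,\lambda}(\d)-h_{M,\lambda}(\d-\e_1)=h_{M/x_{1,i}M,\lambda}(\d)$ along the $d_1$-direction produces, besides the discrete antiderivative of the right-hand side (which is indeed polynomial), a boundary term $g(d_2,\ldots,d_\q)=h_{M,\lambda}(c,d_2,\ldots,d_\q)$ with $c$ fixed, and nothing in your argument shows this term is eventually polynomial in the remaining variables. This is precisely the point where the multigraded statement is harder than the classical one-variable Hilbert--Serre argument, and it is why the paper leans on references that treat the multigraded case specifically.

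The gap is repairable, but it needs a real argument rather than the quoted lemma. One standard fix: show that the slice $\bigoplus_{d_2,\ldots,d_\q} M_{(c,d_2,\ldots,d_\q)}$ is a finitely generated multigraded module over the polynomial ring in the remaining blocks $\ufd[\x_2,\ldots,\x_\q]$ (it is generated by the finitely many products of the multihomogeneous generators of $M$ with the monomials in $\x_1$ of the bounded complementary degree), and then run an outer induction on the number of blocks $\q$, so that the boundary term is itself eventually polynomial in $(d_2,\ldots,d_\q)$; the thresholds then combine by taking coordinatewise maxima as you say. Alternatively one can argue through the multigraded Hilbert series, which is rational with denominator $\prod_{p}(1-t_p)^{n_p+1}$, and extract eventual polynomiality from that, but this again requires an argument in several variables rather than a one-line difference calculus. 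With such an addition (and the routine remark that in the Artinian case each prime quotient $\ufd[\x]/\mfk{p}_j$ is a module over $(\ufd/(\mfk{p}_j\cap\ufd))[\x]$ with $\mfk{p}_j\cap\ufd$ maximal, so lengths reduce to vector space dimensions), your proof goes through.
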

\begin{proof}
The case of an Artinian ring includes the case of a field, which in turn implies the case of an integral domain. The standard reference is \cite{VdW1929}, although it actually covers only the bigraded case over a field. For a modern and more complete treatment of the field case see \cite[Theorem 2.10]{RemondCh5} or \cite[Lemma 2.8]{Maclagan2003}. For the Artinian ring case see \cite[Theorem 2.6]{Trung2010} or \cite{Herrmann1997}.
\end{proof}

In case $\ufd$ is a Noetherian integral domain with fraction field $\field$ we also use the notation $H_M:=P_{M,\op{rank}_{\ufd}}=P_{M\otimes_{\ufd[\x]}\field[\x],\dim_{\field}}$ for brevity. In case $H_M\equiv 0$ (i.e. if and only if $M\otimes_{\ufd[\x]}\field[\x]$ is eventually zero) we set $\rdim_\ufd(M):=-1$. Otherwise, we denote the total degree of $H_M$ by $\rdim_\ufd(M)$ and we call it the \emph{relevant dimension} of $M$. 
If $\rdim_\ufd(M)=0$ or $\rdim_\ufd(M)=-1$ the Hilbert polynomial $H_M$ is a constant nonnegative integer, and we define the \emph{relevant degree} $\rdeg_\ufd(M)\in\N$ to be this integer.

%
%
%


\subsection{Multiprojective subschemes and multisaturation}
\label{sec:multi:geometry}

Given a field $\kk$ and $n\in\N_+$ we denote by $\PP^n_\kk=\op{Proj} (\kk[X_0,\ld,X_n])$ the projective space of dimension $n$ over $\kk$. Given an integer $\q\in\N_+$ as above and a collection $\n=(n_1,\ldots,n_\q)\in\N_+^\q$ of positive natural numbers, we define $\PP^\n_\kk:=\PP^{n_1}_\kk\times\dots\times\PP^{n_\q}_\kk$ and we call it a \emph{multiprojective space}. It is a reduced irreducible scheme over $\op{Spec} \kk$ of dimension $\abs{\n}:=n_1+\ldots+n_\q$. Following \cite[Section 2.5]{RemondCh5}%
, we see that its underlying (Zariski) topological space is naturally set-theoretically in bijection with the set of relevant multihomogeneous prime ideals of $\kk[\x]$. In fact, to every closed subscheme $Z$ of $\PP^\n_\kk$, which we call a \emph{multiprojective subscheme}, is attached a multihomogeneous ideal $I\subseteq \kk[\x]$, called the \emph{ideal of definition} of $Z$ and denoted by $\mcl{I}(Z)$. Conversely, every multihomogeneous ideal $I\subseteq \kk[\x]$ defines a multiprojective subscheme $\mcl{Z}(I)$ such that $\mcl{Z}(\mcl{I}(Z))=Z$ for every multiprojective subscheme $Z$ of $\PP^\n_\kk$. 
%
%
%
For every multihomogeneous ideal $I\subseteq\kk[\x]$ we define its \emph{multisaturation} by $\wb{I}:=\mcl{I}(\mcl{Z}(I))$, so that the ideals in the image of $Z\mapsto \mcl{I}(Z)$ are those satisfying $I=\wb{I}$. 

\begin{proposition}\label{multisaturation}
	The following are equivalent definitions for the multisaturation of $I$.
	\begin{enumerate}[(i)]
	\item 
	$\wb{I}=\{f\in\kk[\x]:\exists \d_f\in\N^\q\ f\kk[\x]_{\d_f}\subseteq I\}$.
	\item 
	$\wb{I}$ is maximal among all multihomogeneous ideals $J$ such that $J_\d=I_\d$ for $\d$ large enough.
	\item
	$\wb{I}$ is the intersection of the primary ideals of $\kk[\x]$ appearing in a minimal primary decomposition of $I$ and corresponding to relevant primes.
	\end{enumerate}
\end{proposition}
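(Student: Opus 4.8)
The plan is to show that the colon-type formula in (i) defines a multihomogeneous ideal which is the unique maximal element of the family considered in (ii), to identify that ideal with the intersection in (iii) by means of a multihomogeneous primary decomposition, and finally to recognize $\mcl I(\mcl Z(I))$ as the same ideal from the properties of the correspondence $Z\leftrightarrow\mcl I(Z)$ recalled above. Throughout I write $S=\kk[\x]$ and $\mfk m_p:=(x_{p,0},\ldots,x_{p,n_p})\subseteq S$ for $p=1,\ldots,\q$, and I use the fact --- immediate from the definition of relevance together with $S/\mfk p$ being a multigraded domain --- that a multihomogeneous prime $\mfk p$ is relevant if and only if $\mfk p\not\supseteq\mfk m_p$ for every $p$.

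First I would check that $I^{\mathrm{sat}}:=\{f\in S:\exists\,\d_f\in\N^\q,\ fS_{\d_f}\subseteq I\}$ is a multihomogeneous ideal containing $I$; this uses only the identity $S_\mu S_\nu=S_{\mu+\nu}$ valid in a polynomial ring. Since $S$ is Noetherian, $I^{\mathrm{sat}}$ is generated by finitely many multihomogeneous $g_1,\ldots,g_k$ with $g_jS_{\d_j}\subseteq I$, and a degree count shows $I^{\mathrm{sat}}_\d=I_\d$ for all $\d$ large. Conversely, if $J$ is any multihomogeneous ideal with $J_\d=I_\d$ for $\d\geq\d_1$, then for homogeneous $f\in J$ of multidegree $\mu$ one has $fS_{\d_1}\subseteq J_{\mu+\d_1}=I_{\mu+\d_1}\subseteq I$, so $f\in I^{\mathrm{sat}}$ and hence $J\subseteq I^{\mathrm{sat}}$. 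Thus $I^{\mathrm{sat}}$ is the unique largest ideal as in (ii) (Noetherianity also makes that family closed under sums, so the maximum exists), which proves (i) $=$ (ii).

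Next I would fix a minimal multihomogeneous primary decomposition $I=\mfk q_1\cap\cdots\cap\mfk q_m$ with $\sqrt{\mfk q_j}=\mfk p_j$, ordered so that exactly $\mfk p_1,\ldots,\mfk p_t$ are relevant, and set $J:=\mfk q_1\cap\cdots\cap\mfk q_t$. For $j>t$ one has $\mfk p_j\supseteq\mfk m_p$ for some $p$, so $\mfk q_j\supseteq\mfk p_j^N\supseteq\mfk m_p^N$ for some $N$, and therefore $(\mfk q_j)_\d=S_\d$ as soon as $\d\geq N\uno$; intersecting gives $J_\d=I_\d$ for $\d$ large, whence $J\subseteq I^{\mathrm{sat}}$ by the previous paragraph. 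For the reverse inclusion, let $f\in I^{\mathrm{sat}}$, so $fS_{\d_f}\subseteq I\subseteq\mfk q_j$ for $j\leq t$; if $f\notin\mfk q_j$ then primariness forces $S_{\d_f}\subseteq\mfk p_j$ with $\d_f\neq\zero$, and this forces $\mfk m_p\subseteq\mfk p_j$ for some $p$ --- otherwise, choosing $x_{p,i_p}\notin\mfk p_j$ for each block, the monomial $\prod_p x_{p,i_p}^{(\d_f)_p}$ would have multidegree $\d_f$ yet lie outside the prime $\mfk p_j$ --- contradicting relevance of $\mfk p_j$. Hence $f\in\mfk q_j$ for all $j\leq t$, i.e.\ $f\in J$, so $I^{\mathrm{sat}}=J$; in particular the intersection in (iii) does not depend on the chosen decomposition. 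Finally, by the construction of $\mcl Z$ and $\mcl I$ recalled from \cite[Section~2.5]{RemondCh5}, $\mcl I(\mcl Z(I))$ is a multihomogeneous ideal containing $I$ with $\mcl I(\mcl Z(I))_\d=I_\d$ for $\d$ large, so by the maximality established above it coincides with $I^{\mathrm{sat}}$, closing the circle.

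I expect the main obstacle to be the third paragraph. On one hand one must have the $\N^\q$-graded analogue of homogeneous primary decomposition at hand; on the other, the two ``visibility'' lemmas used there --- that an irrelevant primary ideal exhausts every sufficiently large multigraded component, and that $S_\d\subseteq\mfk p$ with $\d\neq\zero$ forces some block $\mfk m_p$ into $\mfk p$ --- require a careful, if elementary, monomial bookkeeping. By contrast the equivalence (i) $\Leftrightarrow$ (ii) is pure Noetherian formalism, and the identification with $\mcl I(\mcl Z(I))$ is essentially automatic once the properties of $\mcl I$ and $\mcl Z$ are granted.
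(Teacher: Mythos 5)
Your argument is correct in substance, and it follows a genuinely more self-contained route than the paper for two of the three items. The paper obtains (i) by citing R\'emond's Proposition 2.17, deduces (ii) from (i) by exactly the finite-generation/degree-bound argument you give, and obtains (iii) by citing R\'emond's Lemme 2.4; you instead prove the mutual equivalence of the three intrinsic descriptions from scratch. Your treatment of (iii) is the real added content: the observation that an irrelevant primary component $\mfk q_j$ contains some $\mfk m_p^N$ and hence exhausts $\kk[\x]_\d$ for $\d\geq N\uno$, combined with the ``visibility'' lemma that $\kk[\x]_{\d_f}\subseteq\mfk p_j$ with $\d_f\neq\zero$ forces $\mfk m_p\subseteq\mfk p_j$ for some $p$, is exactly the elementary monomial argument the paper outsources to \cite{RemondCh5}, and as a bonus it shows the intersection in (iii) is independent of the chosen multihomogeneous decomposition. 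Both halves check out (your characterization of relevance of a multihomogeneous prime as not containing any block ideal $\mfk m_p$ is correct, and the case $\d_f=\zero$ is harmless since then $f\in I$ directly).

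The one soft spot is your last step. What you prove unaided is that the ideal of (i), the maximal ideal of (ii), and the relevant-component intersection of (iii) all coincide; but the proposition asserts these are descriptions of $\wb{I}:=\mcl I(\mcl Z(I))$, and the bridge to that definition is not ``essentially automatic'' from the formal properties recalled in the paper (such as $\mcl Z(\mcl I(Z))=Z$). The statement you invoke --- that $\mcl I(\mcl Z(I))$ contains $I$ and satisfies $\mcl I(\mcl Z(I))_\d=I_\d$ for $\d$ large --- is precisely the nontrivial content of R\'emond's Proposition 2.17 (the comparison of a multihomogeneous ideal with the sections of its associated ideal sheaf in large multidegrees), which is the very citation the paper uses for (i). So either make that citation explicit at this point, or supply the multigraded module-versus-sheaf comparison; as written, the final identification is asserted rather than proved, while everything before it is complete.
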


\begin{proof}
(i) is proved in \cite[Proposition 2.17]{RemondCh5}. 
For (ii), the inclusion $I\subseteq\wb{I}$ is clear. 
$\wb{I}$ is generated by finitely many multihomogeneous elements $f_1,\ld,f_\r$ and by (i) there are $\d_{f_1},\ld,\d_{f_\r}\in\N^\q$ such that $f_i\kk[\x]_{\d_{f_i}}\subseteq I$. 
If $\d_1$ is an upperbound for the multidegrees of $f_1,\ld,f_\r$ and if $\d_2$ is an upperbound for $\d_{f_1},\ld,\d_{f_r}$, then for every $\d\geq\d_1+\d_2$ we have $\wb{I}_\d=I_\d$. 
Moreover, if $J$ is a multihomogeneous ideal of $\kk[\x]$ such that $J_\d=I_\d$ for $\d$ large enough, then $J\subseteq \wb{I}$ by (i).
Finally, (iii) is a consequence of \cite[Lemme 2.4]{RemondCh5}%
\footnote{By (i), our $\wb{I}$ coincides with the characteristic ideal $\mfk U_\emptyset(I)$ of R\'emond.}. 
\end{proof}


\subsection{More on the relevant dimension}
\label{sec:multi:dim}

Given a Noetherian integral domain $\ufd$ with fraction field $\field$ and a finitely generated $\ufd[\x]$-module $M$ we defined the relevant dimension $\rdim_\ufd(M)$ in terms of the total degree of the Hilbert polynomial $P_{M,\op{rank}_\ufd}$. 
We denote by $\dim Z$ the dimension of a multiprojective subscheme $Z$ of $\mbb{P}^\n_\field$, and for a prime ideal $\p\subseteq\ufd[\x]$ we let $\dim(M_\p)$ be the Krull dimension of the module $M_\p$, defined in terms of chains of prime ideals of $\ufd[\x]_\p$ containing the annihilator of  $M_\p$ \cite[Appendix]{Cohen-Macaulay}. Then we define
\[
     e(M) := \max \{\abs{\n}-\op{ht}(\p):\ \p\subseteq\field[\x]\text{ relevant prime}, \Ann_{\field[\x]}(M\otimes_\ufd \field)\subseteq \p\},
\]
where $\op{ht}(\p)$ denotes the height of $\p$.

\begin{proposition}\label{dim=dim}
Let $\ufd$ be a Noetherian integral domain with fraction field $\field$ and $M$ a finitely generated $\ufd[\x]$-module. Then
\[  \rdim_\ufd(M)=\rdim_\field(M\otimes_\ufd\field)=\dim\mcl{Z}(\Ann_{\field[\x]}(M\otimes_\ufd \field))=e(M)=\max_\p \dim(M_\p), \]
where in the rightmost formula $\mfk{p}$ ranges through the relevant multihomogeneous primes of $\ufd[\x]$ such that $\p\cap\ufd =(0)$.
\end{proposition}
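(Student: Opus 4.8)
The plan is to establish the chain of four equalities from left to right. The leftmost one is a matter of unwinding definitions: by base change $M\otimes_{\ufd[\x]}\field[\x]=M\otimes_\ufd\field$, and $\op{rank}_\ufd(M_\d)=\dim_\field\big((M\otimes_\ufd\field)_\d\big)$ for every $\d\in\N^\q$, so $H_M=P_{M,\op{rank}_\ufd}$ and $P_{M\otimes_\ufd\field,\,\dim_\field}$ are literally the same polynomial (as already noted after \cref{hilbert:polynomial}), whence $\rdim_\ufd(M)=\rdim_\field(M\otimes_\ufd\field)$. This reduces the two middle equalities to a base field: writing $N:=M\otimes_\ufd\field$, a finitely generated multigraded $\field[\x]$-module, and $J:=\Ann_{\field[\x]}(N)$, I must show $\rdim_\field(N)=\dim\mcl{Z}(J)=e(M)$, and then reconcile this value with $\max_\p\dim(M_\p)$.

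For $\rdim_\field(N)=\dim\mcl{Z}(J)$ I would invoke the geometric meaning of the multigraded Hilbert polynomial: its total degree equals the dimension of the support of the coherent sheaf attached to $N$ on $\PP^\n_\field$, which is exactly $\dim\mcl{Z}(J)$. If a self-contained argument is preferred, I would take a multihomogeneous prime filtration $0=N_0\subset\dots\subset N_\ell=N$ with successive quotients $(\field[\x]/\p_i)(\d_i)$, discard the irrelevant $\p_i$ (which contribute $0$ to the Hilbert polynomial, and whose loci do not meet $\PP^\n_\field$, so that $\mcl{Z}(J)=\bigcup_{\p_i\ \mathrm{relevant}}\mcl{Z}(\p_i)$), and prove $\deg P_{\field[\x]/\p,\dim_\field}=\dim\mcl{Z}(\p)$ for relevant $\p$ by induction on $\dim\mcl{Z}(\p)$, slicing with a linear form of a suitable multidegree $\e_p$ that lies outside $\p$ and is filter-regular for $\field[\x]/\p$ — which is precisely where \cref{prime:avoidance} and \cref{filter-regular:relevant} enter, and illustrates why filter-regularity is the right notion. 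For these classical computations I would cite \cite{RemondCh5,VdW1929,Maclagan2003}.

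For $\dim\mcl{Z}(J)=e(M)$ I would use the set-theoretic correspondence (recalled in \cref{sec:multi:geometry}) between the points of $\PP^\n_\field$ and the relevant multihomogeneous primes of $\field[\x]$. Under it, the irreducible components of $\mcl{Z}(J)$ are attached to the minimal elements among the relevant primes containing $J$, and for a relevant prime $\p\leftrightarrow W$ the multiaffine cone $\op{Spec}(\field[\x]/\p)$ has dimension $\dim W+\q$; since $\field[\x]$ is a polynomial ring over a field, hence catenary and equidimensional of dimension $\abs{\n}+\q$, this gives $\op{ht}(\p)=\abs{\n}-\dim W$. Taking the maximum of $\dim W=\abs{\n}-\op{ht}(\p)$ over the components, and noting that enlarging a prime only raises its height so that the maximum over all relevant primes containing $J$ is attained at a minimal one, yields $\dim\mcl{Z}(J)=e(M)$. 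The degenerate case, in which $N$ is eventually zero and $\wb{J}=\field[\x]$ by \cref{multisaturation}(iii) so $\mcl{Z}(J)=\emptyset$, I would dispatch separately, consistently with the conventions $\dim\emptyset=-1$ and $\rdim=-1$.

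Finally, for $e(M)=\max_\p\dim(M_\p)$ with $\p$ ranging over the relevant multihomogeneous primes of $\ufd[\x]$ satisfying $\p\cap\ufd=(0)$, I would observe that extension and contraction along $\ufd[\x]\hookrightarrow\field[\x]$ give a bijection between such $\p$ and the multihomogeneous primes $\mfk{q}$ of $\field[\x]$, with $\mfk{q}=\p\field[\x]$ and $\p=\mfk{q}\cap\ufd[\x]$; this bijection preserves relevance (because $\field[\x]$ is $\ufd[\x]$-flat and $\ufd[\x]/\p$ is $\ufd$-torsion-free, so $(\field[\x]/\mfk{q})_\d=(\ufd[\x]/\p)_\d\otimes_\ufd\field$ vanishes exactly when $(\ufd[\x]/\p)_\d$ does) and is compatible with localization (as $\ufd\setminus\{0\}\subseteq\ufd[\x]\setminus\p$, one has $\ufd[\x]_\p=\field[\x]_{\mfk{q}}$ and $M_\p\cong N_{\mfk{q}}$, hence $\dim(M_\p)=\dim(N_{\mfk{q}})$). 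It then remains to prove $\max_{\mfk{q}\ \mathrm{relevant}}\dim(N_{\mfk{q}})=e(M)$. For ``$\leq$'': if $\mfk{q}$ is relevant then a minimal prime $\p_0\subseteq\mfk{q}$ of $J$ is itself relevant (otherwise $\p_0$, hence $\mfk{q}$, would contain some $(x_{p,0},\dots,x_{p,n_p})$ and be irrelevant), and every relevant prime of $\field[\x]$ has height $\leq\abs{\n}$ (its quotient is a multiaffine cone of dimension $\geq\q$), so $\dim(N_{\mfk{q}})=\op{ht}(\mfk{q})-\op{ht}(\p_0)\leq\abs{\n}-\op{ht}(\p_0)\leq e(M)$. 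For ``$\geq$'': picking a relevant $\p$ with $\abs{\n}-\op{ht}(\p)=e(M)$, corresponding to a top-dimensional component $W$, and enlarging $\p$ to the relevant prime $\mfk{q}\supseteq\p$ corresponding to a closed point of $W$ (so $\op{ht}(\mfk{q})=\abs{\n}$), one gets $\dim(N_{\mfk{q}})\geq\op{ht}(\mfk{q})-\op{ht}(\p)=e(M)$. The main obstacle, I expect, is the dimension accounting underpinning the last two steps: establishing $\op{ht}(\p)=\abs{\n}-\dim W$ for relevant $\p\leftrightarrow W$ (equivalently, that the multiaffine cone has dimension $\dim W+\q$), together with the two facts that minimal primes of $J$ lying below a relevant prime are relevant and that relevant primes of $\field[\x]$ have height at most $\abs{\n}$ — these are exactly what make the chain close up, whereas the first equality is formal and the second, though substantial, is classical and can be quoted.
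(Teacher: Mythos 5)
Your proposal is correct and at its core follows the same route as the paper, whose proof simply observes that the first equality is definitional, cites R\'emond (Th\'eor\`eme 2.10 and Section 2.5) for the two middle equalities, and deduces the last one from the bijection between primes of $\field[\x]$ and primes of $\ufd[\x]$ contracting to $(0)$ in $\ufd$ --- you flesh out exactly this skeleton (including the height and cone-dimension accounting behind $e(M)$ and the localization compatibility $M_\p\cong N_{\mfk q}$) rather than taking a different path. The only caution concerns your optional self-contained sketch of $\rdim_\field(N)=\dim\mcl{Z}(J)$: as the paper's example following \cref{filter-regular:hilbert} shows, slicing by a filter-regular form of multidegree $\e_p$ can drop the total degree of the Hilbert polynomial by more than one, so the ``suitable'' choice of $p$ there genuinely matters, though this does not affect your main, citation-based line of argument.
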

\begin{proof}
 The first equality is clear, the second and the third are essentially proved in \cite[Theorem 2.10, Section 2.5]{RemondCh5}, the last follows from the fact that the primes of $\field[\x]$ are in bijection with the primes of $\ufd[\x]$ such that $\p\cap\ufd =(0)$.
\end{proof}

The next lemma shows that the operation of quotienting by a filter-regular sequence has the effect of decreasing the total degree of the Hilbert polynomial, by an amount at least equal to the lenght of the sequence.

\begin{lemma}\label{filter-regular:hilbert}
Let $R$ be an Artinian ring and $M$ a finitely generated multigraded $R[\x]$-module.
Let $\seq\in R[\x]$ be a filter-regular element of multihomogeneous degree $\d$ for $M$ and $\lambda$ be the length function on $Mod_R$.
If $P_{M,\lambda}$ is not the zero polynomial, then the total degree of $P_{M/\seq M,\lambda}$ is at least one less the total degree of $P_{M,\lambda}$.
If $\d\geq \uno$ then this inequality is indeed an equality.
\end{lemma}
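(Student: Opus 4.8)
The plan is to work with the exact sequence induced by multiplication by $\seq$ and to pass to Hilbert polynomials, using that the length function $\lambda$ is additive. Since $\seq$ is filter-regular for $M$, the colon module $(0:_M\seq)$ is eventually zero, so for $\nu$ large enough we have a short exact sequence of $R$-modules
\[
    0 \to M_\nu \xrightarrow{\cdot\seq} M_{\nu+\d} \to (M/\seq M)_{\nu+\d} \to 0.
\]
Applying $\lambda$ and using additivity gives $h_{M/\seq M,\lambda}(\nu+\d) = h_{M,\lambda}(\nu+\d) - h_{M,\lambda}(\nu)$ for all large $\nu$, hence the identity of polynomials
\[
    P_{M/\seq M,\lambda}(\mathbf t + \d) = P_{M,\lambda}(\mathbf t+\d) - P_{M,\lambda}(\mathbf t),
\]
valid as polynomials in $\mathbf t = (t_1,\ld,t_\q)$ by Zariski density of the large multidegrees (this uses \cref{hilbert:polynomial}, the Artinian-ring case, for both $M$ and $M/\seq M$). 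After the invertible change of variables $\mathbf t \mapsto \mathbf t - \d$ the right-hand side becomes the ``discrete difference'' $Q(\mathbf t) := P_{M,\lambda}(\mathbf t) - P_{M,\lambda}(\mathbf t-\d)$, and it suffices to bound its total degree.

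The core of the argument is then a purely algebraic fact about polynomials: if $P\in\Q[t_1,\ld,t_\q]$ has total degree $e\geq 0$, then $P(\mathbf t) - P(\mathbf t-\d)$ has total degree at most $e-1$, with equality when $\d\geq\uno$ (all components positive). For the inequality one writes $P = \sum_{|\alpha|\le e} c_\alpha \mathbf t^\alpha$ and observes that the degree-$e$ part cancels, since $(\mathbf t-\d)^\alpha = \mathbf t^\alpha + (\text{lower order})$; so $Q$ has degree $\le e-1$. For the equality when $\d\geq\uno$: letting $P_e$ be the (nonzero) homogeneous degree-$e$ part of $P$, the degree-$(e-1)$ part of $Q$ equals $P_e(\mathbf t) - [\,\mathbf t^\alpha \mapsto (\mathbf t-\d)^\alpha\,]$ truncated to degree $e-1$, which is $\sum_{j=1}^\q \d_j \,\partial P_e/\partial t_j$ up to sign. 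This directional derivative of $P_e$ along the strictly positive vector $\d$ is nonzero: evaluating on the positive orthant, $\sum_j \d_j\,\partial P_e/\partial t_j$ restricted there is (by Euler's relation applied after scaling, or simply because a nonzero homogeneous polynomial cannot have vanishing derivative in a direction interior to the orthant when its coefficients... ) — more safely, one notes $\sum_j \d_j \partial_j$ is a first-order operator with no kernel on nonzero homogeneous polynomials when $\d$ has all positive entries, because substituting $t_j \mapsto \d_j s$ turns it into $s\,\tfrac{d}{ds}$ acting on $P_e(\d_1 s,\ld,\d_q s) = P_e(\d)\,s^e$, which is $e\,P_e(\d)\,s^e \neq 0$ provided $P_e(\d)\neq 0$; and $P_e(\d)\neq 0$ can be arranged since the zero locus of $P_e$ is a proper closed subset, so after possibly replacing $\d$ by a positive multiple — but $\d$ is fixed, so instead one argues directly that $\sum_j \d_j\partial_j P_e \equiv 0$ forces $P_e\equiv 0$ by the substitution above, since $P_e(\d_1 s,\ld,\d_q s)$ would then have zero derivative, hence be constant, hence zero as a homogeneous polynomial of positive degree... and the degree-$0$ case $e=0$ is excluded by hypothesis or trivial.

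I would organize the write-up as: (1) set up the short exact sequence and derive the polynomial identity $P_{M/\seq M,\lambda} = P_{M,\lambda}(\cdot+\d) - P_{M,\lambda}$; (2) state and prove the lemma on finite differences of multivariate polynomials; (3) combine. The main obstacle is step (2) in the equality case: one must show that the first-order difference operator $P\mapsto P(\mathbf t)-P(\mathbf t-\d)$ genuinely drops the degree by exactly one and not more, i.e. that the leading directional derivative does not vanish. The cleanest route is the one-variable reduction sketched above: restrict the homogeneous leading form $P_e$ to the line $s\mapsto s\d$, where it becomes $P_e(\d)s^e$, and note $P_e(\d)\neq 0$ for a nonzero homogeneous $P_e$ and $\d\geq\uno$ is false in general — so actually the honest statement is that $P_e(\d)$ may vanish, and equality of degrees can fail. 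Re-examining: the correct and robust claim is only that the degree drops by \emph{at least} one always, and by \emph{exactly} one when $\d\geq\uno$ because then $\d$ lies in the interior of the positive orthant where $P_e$ (having nonnegative... ) cannot vanish since $M_\nu$ has nonnegative length growth; more precisely $P_{M,\lambda}$ takes nonnegative values on $\N^\q$, so its leading form $P_e$ is nonnegative on the orthant and, being homogeneous and nonzero, is strictly positive on the interior — hence $P_e(\d)>0$ for $\d\geq\uno$, closing the gap. I would therefore insert, just before step (2), the observation that $h_{M,\lambda}(\nu)=\lambda(M_\nu)\ge 0$ for all $\nu$, which forces the leading form of the Hilbert polynomial to be nonnegative on $\N^\q$ and thus positive at every $\d\geq\uno$; this is the one genuinely load-bearing point and the rest is routine.
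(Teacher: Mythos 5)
Your setup is exactly the paper's: the short exact sequence $0\to M_\nu\xrightarrow{\cdot\seq}M_{\nu+\d}\to (M/\seq M)_{\nu+\d}\to 0$ coming from filter-regularity, additivity of $\lambda$, and the reduction to a statement about the finite difference $P(\mathbf t)-P(\mathbf t-\d)$ of the Hilbert polynomial; your proof of the inequality (cancellation of the top-degree form) is correct. The problem is the equality case, at precisely the point you yourself flag as load-bearing. You need the degree-$(e-1)$ homogeneous part of the difference, namely $\sum_{j}\d_j\,\partial P_e/\partial t_j$, to be a nonzero polynomial, and your reduction of this to $P_e(\d)\neq 0$ (via restriction to the line $s\mapsto s\d$) is fine. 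But the justification you finally settle on --- that a nonzero homogeneous polynomial which is nonnegative on the positive orthant must be strictly positive on its interior --- is false: $(t_1-t_2)^2$ is homogeneous of degree $2$, nonzero, nonnegative everywhere (in particular on the closed orthant), yet it vanishes at $\d=(1,1)$, and its directional derivative $\sum_j\d_j\partial_j$ along $(1,1)$ vanishes identically, so for such a leading form the difference operator drops the total degree by two. Nonnegativity of the \emph{values} of the Hilbert function/polynomial cannot close this gap, because $(t_1-t_2)^2$ is itself a nonnegative-integer-valued polynomial on $\N^2$; nothing you have used distinguishes it from an admissible leading form.

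What is needed is a structural fact about multigraded Hilbert polynomials, and this is exactly what the paper invokes: the coefficients of the monomials of highest total degree of $P_{M,\lambda}$ are nonnegative (\cite[Theorem 2.6]{Trung2010}). Granting this, the leading form $P_e$ has nonnegative coefficients, not all zero, so for $\d\geq\uno$ one has $P_e(\d)>0$ (equivalently, $\sum_j\d_j\partial_j P_e$ has nonnegative coefficients and is nonzero, since no cancellation can occur), and the rest of your argument goes through verbatim. So: same route as the paper, inequality part correct, but the equality part has a genuine gap that must be filled by citing or proving the nonnegativity of the top-degree coefficients of multigraded Hilbert polynomials, rather than only the nonnegativity of their values.
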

\begin{proof}
Since  $\seq\in R[\x]$ is filter-regular for $M$ we have a short exact sequence $0\to M_\nu\to M_{\nu+\d} \to (M/\seq M)_{\nu+\d}\to 0$ for $\nu$ large enough. 
From the additivity of $\lambda$ we get $P_{M/\seq M,\lambda}(\nu+\d)=P_{M,\lambda}(\nu+\d)-P_{M,\lambda}(\nu)$ for $\nu$ large enough, which implies the first statement by inspection.
The second part is similar, and uses the fact that the coefficients of a Hilbert polynomial corresponding to monomials of highest total degree are nonnegative \cite[Theorem 2.6]{Trung2010}.
\end{proof}

\begin{corollary}\label{filter-regular:hilbert:rdim}
Let $\ufd$ be a Noetherian integral domain, let $M$ be a finitely generated $\ufd[\x]$-module and let $J$ be a multihomogeneous ideal of $\ufd[\x]$.
Then
\[
\rdim_\ufd(M/J M)\leq\max\left\{-1,\,\rdim_\ufd(M)-\fgr(J,M)\right\}.
\]
\end{corollary}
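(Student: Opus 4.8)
The plan is to reduce the statement to the case where the coefficient ring is a field --- where the Hilbert-polynomial bound \cref{filter-regular:hilbert} is available --- and then transport the conclusion back to $\ufd$ by means of \cref{dim=dim}. Write $d:=\rdim_\ufd(M)$ and $r:=\fgr(J,M)$, and let $\field$ be the fraction field of $\ufd$. If $d=-1$ then $M\otimes_\ufd\field$ is eventually zero, hence so is its quotient $(M/JM)\otimes_\ufd\field$, and the inequality is clear; so from now on we may assume $d\geq 0$.

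The key preliminary observation is that a filter-regular sequence survives the base change from $\ufd$ to $\field$. Precisely, if $\Seq=(\seq_0,\ld,\seq_{s-1})$ is a filter-regular sequence for $M$ over $\ufd[\x]$ with each $\seq_i\in J$, then $\Seq$, regarded in $\field[\x]=\ufd[\x]\otimes_\ufd\field$, is a filter-regular sequence for $\overline{M}:=M\otimes_\ufd\field$ over $\field[\x]$ with each $\seq_i\in J\field[\x]$. This holds because $\field$ is a flat (indeed localization) $\ufd$-module, so tensoring with $\field$ over $\ufd$ commutes with forming the colon submodules $(0:_N\seq_i)$ and with taking multihomogeneous components --- hence it preserves the property ``eventually zero'' --- and it also commutes with forming the successive quotients $N/(\seq_0,\ld,\seq_{i-1})N$. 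I expect this base-change step to be the only delicate point, precisely because \cref{filter-regular:hilbert} is formulated over an Artinian base and not over $\ufd$ directly.

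With this in hand, take a filter-regular sequence $\Seq=(\seq_0,\ld,\seq_{s-1})$ in $J$ for $M$ and set $\overline{M}_i:=\overline{M}/(\seq_0,\ld,\seq_{i-1})\overline{M}$. Since $\field$ is Artinian, \cref{filter-regular:hilbert} (with $\lambda=\dim_\field$) applies at every step: for each $i$, either $P_{\overline{M}_i,\dim_\field}$ is already the zero polynomial, in which case it remains zero for all larger indices because a quotient of an eventually zero module is eventually zero, or $P_{\overline{M}_{i+1},\dim_\field}$ has total degree at least one less than that of $P_{\overline{M}_i,\dim_\field}$. As $P_{\overline{M}_0,\dim_\field}=H_M$ has total degree $d$, after $s$ steps we get $\rdim_\field(\overline{M}_s)\leq\max\{-1,d-s\}$. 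Finally, $(M/JM)\otimes_\ufd\field=\overline{M}/(J\field[\x])\overline{M}$ is a quotient of $\overline{M}_s$, since each $\seq_i\in J\field[\x]$; and by \cref{dim=dim} the relevant dimension of a module equals $\dim\mcl{Z}(\Ann_{\field[\x]}(-))$ of its base change to $\field$, which can only decrease under surjections because annihilators only grow. Hence
\[
\rdim_\ufd(M/JM)=\rdim_\field\bigl(\overline{M}/(J\field[\x])\overline{M}\bigr)\leq\rdim_\field(\overline{M}_s)\leq\max\{-1,d-s\}.
\]
Letting $s$ range over the lengths of filter-regular sequences in $J$ for $M$ --- taking $s=r$ if $\fgr(J,M)$ is finite, and $s>d$ (which exists since the supremum defining $\fgr$ is then attained arbitrarily high) if it is infinite --- the right-hand side becomes $\max\{-1,d-r\}$, which is the asserted bound.
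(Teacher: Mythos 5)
Your proof is correct and follows the route the paper intends: the corollary is left as an immediate consequence of \cref{filter-regular:hilbert}, obtained exactly as you do by base-changing to the fraction field (an Artinian ring), iterating the lemma along a filter-regular sequence in $J$, and comparing with the quotient by $J$ via \cref{dim=dim}. Your explicit flatness argument that filter-regularity survives the base change $\ufd\to\field$ is the right justification for the step the paper leaves implicit, so nothing is missing.
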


\begin{remark}
To see that the hypothesis $\d\geq\uno$ in \cref{filter-regular:hilbert} is necessary, take $\q=2$, $n_1=2$, $n_2=1$, $M=\ufd[\x]/(x_{2,1})$ and $\seq=x_{2,0}$, for which $\rdim_\ufd(M)=2$ and $\rdim_\ufd(M/\seq M)=-1$.
However, it is often possible to weaken this condition: see \cite[Theorem 2.10, (3)]{RemondCh5}.
\end{remark}


\section{Koszul complexes and resultants}
\label{sec:res}

%
\subsection{Multigraded Koszul complexes}
\label{sec:res:koszul}

Given a commutative ring $R$, an $R$-module $M$ and a sequence $\Seq=(\seq_0,\ld,\seq_r)$ of elements of $R$, the Koszul complex $\bK_\bullet(\Seq,M)$ is a finite complex of $R$-modules given by 
\[
\bK_\bullet(\Seq,M) \ :=\ 0\to (\BigWedge^{\r+1} L)\otimes M\xrightarrow{\de_{\r+1}} \dots \xrightarrow{\ \de_2\ }(\BigWedge^1 L)\otimes M \xrightarrow{\ \de_1\ } M\to 0 ,
\]
where $L$ is the free $R$-module $R^{\r+1}$ equipped with a basis $(e_0,\ldots,e_\r)$, the tensor products are taken over $R$, and the differentials $\de_p$ are defined by
\[
\de_p(e_{i_1}\wedge\dots\wedge e_{i_p}\otimes m) = 
\sum_{s=1}^p (-1)^{s+1} \seq_{i_s} e_{i_1}\wedge \dots \wedge \wh{e_{i_s}}\wedge \dots \wedge e_{i_p}\otimes m.
\]
The homology modules $\oH_p(\bK_\bullet(\Seq,M))$ are denoted by $\oH_p(\Seq,M)$ for short and their direct sum $\oH_\bullet(\Seq,M)$ is called the \emph{Koszul homology} of the sequence $\Seq$ with coefficients in $M$. For the $0$-th and $(\r+1)$-th homology modules we have the natural isomorphisms $\oH_0(\Seq,M)\cong M/(\Seq)M$, $\oH_{r+1}(\Seq,M)\cong (0:_M (\Seq))$. Moreover, the annihilator $\Ann_R(\oH_\bullet(\Seq,M))$ contains both $\Ann_R(M)$ and the ideal $(\Seq)$.
We refer to Section 1.6 of \cite{Cohen-Macaulay} for more on the general theory of Koszul complexes.

Suppose now that $R$ is multigraded as in \cref{sec:multi:def}, $M$ is a multigraded $R$-module, $\dd=(\dz,\ld,\dr)$ is a collection of nonzero multidegrees and $\Seq=(\seq_0,\ld,\seq_\r)$ is a sequence of multihomogeneous elements of $R$ with multidegrees prescribed by $\dd$. Then we can introduce on the $R$-modules $\mbf{K}_p(\Seq,M)=\left(\BigWedge^p L\right)\otimes_{R} M$ the natural $\N^\q$-grading for which $\deg_{\N^\q}(e_{i_1}\wedge\dots\wedge e_{i_p}\otimes m)=\d^{(i_1)}+\dots + \d^{(i_p)}+\deg_{\N^\q}(m)$, for $m$ multihomogeneous. This is also done in \cite[Section 3]{Viet2012} and is similar to the homogeneous case \cite[Remark 1.6.15]{Cohen-Macaulay} \cite{Chardin1993}.
We notice that the differentials preserve this grading, so that the homology modules inherit a multigraded structure.
We then write $\bK^\nu_\bullet(\Seq,M)$ and $\oH^\nu_\bullet(\Seq,M)$ for the component of  multidegree $\nu$ respectively of the Koszul complex and of the Koszul homology.
If we denote the restricted differentials by 
\[
	\de_p^\nu:\bK_p^\nu(\Seq,M)\to\bK_{p-1}^\nu(\Seq,M)
\]
then for every $\nu\in\N^\q$ we see that $\bK^\nu_\bullet(\Seq,M)$ is a complex of $R_\zero$-modules with differentials $\de^\nu_p$ and homology $\oH^\nu_\bullet(\Seq,M)$.

The next proposition is an adaptation to filter-regular sequences of a classical result that relates the existence of regular sequences to the vanishing of higher Koszul homology. 
We give a proof along the lines of \cite[Section 8.5, Theorem 6]{Northcott}, that uses the following definition. 

\begin{definition}\label{def:northcott}
Let $R$ be a Noetherian multigraded ring, $\Seq=(\seq_0,\ld,\seq_{s-1})$ a sequence of $s$ multihomogeneous elements of $R$, and $M$ a finitely generated multigraded $R$-module.
If there is at least one integer $\lambda\in\{1,\ld,s\}$ such that $\oH_\lambda(\Seq,M)$ is not eventually zero, we define $\lamb M$ to be the largest such integer.
Otherwise, we set $\lamb M:={{-\infty}}$.
\end{definition}

\begin{proposition}\label{northcott}
Let $R$, $M$ and $\Seq$ be as in \cref{def:northcott}, and let $J=(\Seq)$. We have:
\begin{enumerate}[(i)]
\item
if $\beta\in J$ is filter-regular for $M$, then $\lamb {M/\beta M} = \lamb M +1$, where we let ${-\infty}+1:={-\infty}$;
\item
$\fgr(J,M)=s-\lamb M$, with $\fgr(J,M)$ as in  \cref{sec:multi:regular} and $s-({{-\infty}}):=\infty$; 
\item
if $\Seq$ is a filter-regular sequence for $M$, then $\bK^\nu_\bullet(\Seq,M)$ is acyclic (i.e. its $p$-th homology modules vanish for $p\geq 1$) for $\nu$ large enough.
\end{enumerate}
\end{proposition}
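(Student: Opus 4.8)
The plan is to establish (i) first, then derive (ii) from it by an induction on $s-\lamb M$ built on \cref{filter-regular:relevant} and \cref{prime:avoidance}, and finally read off (iii) directly from (ii).

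For (i), I would factor multiplication by $\beta$ on $M$ into the two short exact sequences of multigraded modules
\[
0\to(0:_M\beta)(-\d)\to M(-\d)\xrightarrow{\ \cdot\beta\ }\beta M\to 0,\qquad 0\to\beta M\xrightarrow{\ \iota\ }M\to M/\beta M\to 0,
\]
where $\d$ is the multidegree of $\beta$ and $\iota$ is the inclusion. Since $\beta$ is filter-regular for $M$, the module $(0:_M\beta)$ is eventually zero, so each $\bK_p(\Seq,(0:_M\beta))$ — a finite direct sum of multidegree shifts of $(0:_M\beta)$ — is eventually zero, hence so is each $\oH_p(\Seq,(0:_M\beta))$; as there are only finitely many $p$, one bound $\nu^{(0)}\in\N^\q$ makes all of them vanish in multidegrees $\geq\nu^{(0)}$. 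Feeding the first sequence into the long exact Koszul homology sequence, and using that $\oH_\bullet(\Seq,-)$ commutes with shifts, shows that $\cdot\beta$ induces an isomorphism $\oH_p(\Seq,M)(-\d)\xrightarrow{\ \sim\ }\oH_p(\Seq,\beta M)$ in every sufficiently large multidegree. On the other hand $\beta\in J=(\Seq)$ annihilates every Koszul homology module, so the composite $\oH_p(\Seq,M)\xrightarrow{\cdot\beta}\oH_p(\Seq,\beta M)\xrightarrow{\iota_*}\oH_p(\Seq,M)$ is zero; together with the isomorphism just obtained this forces $\iota_*$ to vanish in large multidegree, for every $p$. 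Substituting into the long exact sequence of the second short exact sequence breaks it, in large multidegree, into short exact pieces
\[
0\to\oH_p(\Seq,M)\to\oH_p(\Seq,M/\beta M)\to\oH_{p-1}(\Seq,\beta M)\to 0,
\]
and, since $\oH_{p-1}(\Seq,\beta M)$ equals a shift of $\oH_{p-1}(\Seq,M)$ in large multidegree, I would conclude that $\oH_p(\Seq,M/\beta M)$ is eventually zero if and only if both $\oH_p(\Seq,M)$ and $\oH_{p-1}(\Seq,M)$ are. Extracting from this the behaviour of the largest index $p$ with $\oH_p$ not eventually zero — together with the observation that $\oH_s(\Seq,M)\cong(0:_M(\Seq))$ is contained in $(0:_M\beta)$, hence already eventually zero — then gives $\lamb{M/\beta M}=\lamb M+1$.

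For (ii), the bound $\fgr(J,M)\leq s-\lamb M$ follows by iterating (i): a filter-regular sequence $(\beta_0,\ld,\beta_{t-1})$ for $M$ with all $\beta_j\in J$ gives, step by step, $\lamb{M/(\beta_0,\ld,\beta_{t-1})M}=\lamb M+t$, which forces $t\leq s-\lamb M$. For the reverse inequality I would first record the equivalences $\lamb M=s\iff\oH_s(\Seq,M)=(0:_M(\Seq))$ is not eventually zero $\iff J$ is contained in some relevant associated prime of $M$ $\iff\fgr(J,M)=0$; the middle steps use that associated primes of a finitely generated multigraded module are annihilators of multihomogeneous elements, that such a module is eventually zero precisely when all its associated primes are irrelevant, and \cref{filter-regular:relevant} together with \cref{prime:avoidance}. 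When $\lamb M<s$ this yields some $\beta\in J$ filter-regular for $M$; by (i) we get $\lamb{M/\beta M}=\lamb M+1$, so $s-\lamb{M/\beta M}=(s-\lamb M)-1$, and induction on $s-\lamb M$ produces a filter-regular sequence of length $s-\lamb M$ in $J$. The case $\lamb M=-\infty$ runs the same way: such a $\beta$ still exists, $\lamb{M/\beta M}=-\infty$ again, the construction never terminates, and $\fgr(J,M)=\infty=s-(-\infty)$.

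Part (iii) then follows at once: if $\Seq$ is a filter-regular sequence for $M$ it is a filter-regular sequence of length $s$ with all terms in $J=(\Seq)$, so $\fgr(J,M)\geq s$, whence $s-\lamb M\geq s$ by (ii) and $\lamb M\leq 0$; thus $\oH_p(\Seq,M)$ is eventually zero for every $p\geq 1$, i.e. $\bK^\nu_\bullet(\Seq,M)$ is acyclic for all large $\nu$. I expect the only genuine obstacle to be (i): the argument is short once one decides to combine the two short exact sequences with the annihilation of Koszul homology by $J$, but care is needed to make every ``eventually zero'' assertion uniform over the finitely many homological degrees and to track the multidegree shifts correctly — after that, (ii) is a routine induction and (iii) a one-line consequence.
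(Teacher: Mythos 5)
Your proposal is correct and in substance follows the paper's own route: part (i) rests on a long exact sequence in Koszul homology plus the fact that $\beta\in J$ annihilates $\oH_\bullet(\Seq,M)$, part (ii) handles the case $\fgr(J,M)=0$ via \cref{filter-regular:relevant} and \cref{prime:avoidance} (writing an associated prime as the annihilator of a multihomogeneous element) and then inducts using (i), and (iii) is read off from (ii) exactly as in the paper. The only genuine deviations are technical. In (i), where the paper works directly with the eventually exact sequences $0\to M_{\nu-\d}\xrightarrow{\ \beta\ }M_\nu\to (M/\beta M)_\nu\to 0$, you factor multiplication by $\beta$ through $\beta M$ and first dispose of the Koszul homology of $(0:_M\beta)$; this buys a cleaner treatment of the fact that $\cdot\beta$ is only eventually injective, at the price of one extra exact sequence, and both roads end at the same split short exact sequences of homology in large multidegree. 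In (ii) you run the induction on $s-\lamb M$ (building the filter-regular sequence, with the $\fgr(J,M)=\infty$ case handled by non-termination) rather than on $\fgr(J,M)$ with the paper's contradiction argument; these are interchangeable. One point you pass over in a single clause, exactly as the paper does: at $p=1$ your equivalence involves $\oH_0(\Seq,M)=M/JM$, which need not be eventually zero, so the ``extraction of the largest index'' in the case $\lamb M=-\infty$ silently depends on whether homological degree $0$ is counted in \cref{def:northcott} (with the convention restricted to $\lambda\in\{1,\ld,s\}$ the conclusion of (i) can fail in that boundary case, e.g. for $M=\kk[\x]$ and $\Seq$ a single variable). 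Since the paper's proof elides the same point, this is a shared convention issue rather than a gap attributable to your argument.
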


\begin{proof}
Let $\beta\in J$ be filter-regular for $M$. By definition, $\beta$ is a multihomogeneous element of $R$. Let $\d\in\N^\q$ be its multidegree.
The $R$-module $M/\beta M$ is finitely generated and multigraded, so $\lamb  {M/\beta M}$ is defined. 
For every $\nu\in\N^\q$ large enough we have an exact sequence 
\[
0\to M_{\nu-\d}\xrightarrow{\beta}M_\nu \to (M/\beta M)_\nu\to 0,
\]
where the first map is induced by the multiplication by $\beta$ in $M$. The collection of these maps induce a long exact sequence in Koszul homology that at the level of multihomogeneous components takes the form
\[
\to
\oH_\mu(\Seq,M)_{\nu-\d}
\xrightarrow{\beta}
\oH_\mu(\Seq,M)_{\nu}
\to
\oH_\mu(\Seq,M/\beta M)_{\nu}
\to
\oH_{\mu-1}(\Seq,M)_{\nu-\d}
\to
\]
 Since $\oH_\mu(\Seq,M)$ is annihilated by all elements of $J$, the above exact sequence simplifies to 
\[
0
\to 
\oH_\mu(\Seq,M)_{\nu}
\to
\oH_\mu(\Seq,M/\beta M)_{\nu}
\to
\oH_{\mu-1}(\Seq,M)_{\nu-\d}
\to 
0
\]
For $\mu\great \lamb M+1$ both $\oH_\mu(\Seq,M)$ and $\oH_{\mu-1}(\Seq,M)$ are eventually zero modules and hence we obtain $\oH_\mu(\Seq,M/\beta M)_{\nu}=0$ as well for sufficiently large $\nu$. In particular, if $\lamb M={-\infty}$ we have $\lamb{M/\beta M}={-\infty}$ as well.
On the other hand if $\lamb M\geq 0$ and $\mu=\lamb M+1$, we obtain an isomorphism
\[
\oH_\mu(\Seq,M/\beta M)_{\nu}
\cong
\oH_{\mu-1}(\Seq,M)_{\nu-\d}
\]
for sufficiently large $\nu\in\N^\q$, which shows that $\oH_\mu(\Seq,M/\beta M)$ is not eventually zero. 
Therefore, we have $\lamb {M/\beta M}=\lamb M+1$, which is (i).


To prove (ii), first suppose that $\fgr(J,M)=0$. This means that no element of $J$ is filter-regular for $M$. In this case all multihomogeneous elements of $J$ are contained in the union of the relevant associated primes of $M$ by \cref{filter-regular:relevant}, and so by \cref{prime:avoidance} all of $J$ is contained in one of them, say $\mfk{p}$.
Write $\mfk{p}=(0:_Rm)$ for a multihomogeneous element $m\in M$.
Since $\mfk{p}$ is a relevant prime, $Rm$ is a module which is not eventually zero and is contained into the colon module $(0:_M\mfk{p})$, which in turn is contained in $(0:_MJ)$. This proves that $\oH_s(\Seq,M)=(0:_M J)$ is not eventually zero and so $\lamb M=s$.

Now assume that $\fgr(J,M)\great 0$ and $\fgr(J,M)\neq \infty$. Then by definition there exists $\beta\in J$ that is filter-regular for $M$ and $\fgr(J,M/\beta M) = \fgr(J,M)-1$.
Then we have, by induction on $\fgr(J,M)$ and (i) above, that 
\[
\fgr(J,M)=\fgr(J,M/\beta M) +1 = s +1 - \lamb {M/\beta M} = s-\lamb M.
\]
On the other hand, if $\fgr(J,M) = \infty$, we can find a filter-regular sequence $\underline{\beta}=(\beta_1,\ld,\beta_n)$ for $M$, with $n$ arbitrarily large. 
Let $N=M/(\underline\beta) M$. Then by repeatedly using (i) we get $\lamb N = \lamb M+n$. However, we clearly have $\lamb N \leq s$, so we get a contradiction if $\lamb M\geq 0$ and $n\great s$. 

Finally, suppose that $\Seq$ is a filter-regular sequence for $M$. Then $\fgr(J,M)\geq s$ and so, by (ii) above, we get $\lamb M\leq 0$. 
This exactly means that $\bK^\nu_\bullet(\Seq,M)$ is acyclic for $\nu$ large enough.
\end{proof}

\begin{remark}\label{northcott:alternative}
	Another approach to prove \cref{northcott} is to to use the fact that a multihomogeneous element $\seq\in R$ is filter-regular for $M$ if and only if it is regular for $M_{\geq \d}=\bigoplus_{\d'\geq\d} M_{\d'}$ for some $\d\in\N^\q$.
\end{remark}

\begin{remark}\label{rmk:northcott}
\Cref{northcott} also shows that all maximal filter-regular sequences for $M$ in $J$ have the same number of elements.
\end{remark}
%
%


\subsection{Contents and divisors of torsion modules}
\label{sec:res:ann}

Given a Noetherian integral domain $\ufd$ and a finitely generated $\ufd$-module $M$, we say that $M$ is a \emph{torsion module} if $\Ann_\ufd(M)\neq 0$.
If $\mfk{p}$ is a prime ideal of $\ufd$, then the localization $M_\mfk{p}$ is not the zero module if and only if $\Ann_\ufd(M)\subseteq \mfk{p}$. 
In particular, choosing $\mfk{p}=\{0\}$, we see that $M$ is a torsion $\ufd$-module if and only if $M\otimes_\ufd\field=0$, where $\field$ is the field of fractions of $\ufd$.
Moreover, if $\mfk{p}$ is a prime ideal of height 1 and $M$ is a torsion $\ufd$-module, then $M_\mfk{p}$ is a torsion $\ufd_{\mfk{p}}$-module and thus it has finite length $\ell(M_{\mfk{p}})$. This length is nonzero if and only if $\mfk{p}$ is a minimal associated prime of $M$ in $\ufd$.

\begin{definition}\label{def:div}
If $\ufd$ is a Noetherian integral domain, we denote by $\op{Div}(\ufd)$ the free abelian group generated by the primes $\mfk{p}$ of $\ufd$ of height 1. If $M$ is a torsion $\ufd$-module we define $\div_\ufd(M)\in\op{Div}(\ufd)$ by
\[
		\div_\ufd(M):= \sum_{\mfk{p}} \ell(M_{\mfk{p}}) [\mfk{p}] ,
\]
where the sum ranges over all primes $\mfk{p}$ of $\ufd$ of height 1. If $M$ is not torsion, we define $\div(M)=0$.
\end{definition}

We refer to \cite{bourbaki}[Chap. 7, par.4] for the theory of divisors of torsion modules. 
In case $\ufd$ is an UFD ring, every prime of height 1 is principal, generated by an irreducible (prime) element $\pi\in\ufd$, well defined up to multiplication by a unit $u\in\ufd^\times$.

\begin{definition}\label{def:ann}
If $\ufd$ is an UFD ring, $M$ is a torsion $\ufd$-module and $\op{irr}(\ufd)$ is a choice of representatives for the irreducible elements of $\ufd$, we define the \emph{content} $\chi_\ufd(M)\in \ufd$ of $M$ by the formula
\[
 \chi_\ufd(M) := \prod_{\pi\in \op{irr}(\ufd)} \pi^{\ell(M_{(\pi)})} .
\]
\end{definition}

In elimination theory, the content of a torsion module is sometimes called \emph{annihilant form} \cite[Definition 1.22]{DAndrea2000}. 
      This notion is also related to the MacRae invariants and the zeroth Fitting ideals. 

The following is a technical result that we will use in the next paragraph to be able to define the resultant.
Together with \cref{filter-regular:generic} below, it generalizes \cite[Theorem 3.3]{RemondCh5}, but our proof is considerably different, since we cannot make use of multihomogeneous elimination theory here.
Instead, we make the key observation that the multiplicities appearing in the divisors under consideration can be computed as local Hilbert functions.
Then to prove that they are eventually constant, it suffices to show that the corresponding Hilbert polynomials have degree zero.


\begin{proposition}\label{div:stab}
Let $\ufd$ be a Noetherian integral domain and $M$ a finitely generated multigraded $\ufd[\x]$-module that is projective as an $\ufd$-module. 
Let also $\rdim_\ufd(M)=\r$ and $\Seq=(\seq_0,\ld,\seq_\r)$ be a filter-regular sequence for $M$ in $\ufd[\x]$.
Then there is $\nu_0\in\N^\q$ such that $\div_\ufd((M/(\Seq)M)_\nu)=\div_\ufd((M/(\Seq)M)_{\nu_0})\neq 0$ for every $\nu\geq \nu_0$.
\end{proposition}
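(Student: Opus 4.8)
The plan is to localise at each height-one prime of $\ufd$ and show that the corresponding length of $(M/(\Seq)M)_\nu$ is eventually constant in $\nu$, realising it as a local Hilbert function and proving that the associated Hilbert polynomial has degree $\le 0$. Write $N:=M/(\Seq)M$. First note that the chain $\{\Ann_\ufd(N_\nu)\}_\nu$ is eventually increasing (for $\nu$ large one has $N_{\nu+\e_p}=\sum_i x_{p,i}N_\nu$), hence stationary, so only finitely many height-one primes $\p_1,\ld,\p_t$ occur in any $\div_\ufd(N_\nu)$. It therefore suffices to show that for each height-one prime $\p\subseteq\ufd$ the integer $\ell_{\ufd_\p}\!\big((N_\nu)_\p\big)$ is eventually constant in $\nu$ (and nonzero for at least one $\p$). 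Fix $\p$ and localise. Since localisation is exact and commutes with taking multihomogeneous components, $M_\p$ is componentwise free over $\ufd_\p$, $\rdim_{\ufd_\p}(M_\p)=\rdim_\ufd(M)=\r$, and $\Seq$ is still a filter-regular sequence for $M_\p$ over $\ufd_\p[\x]$ (each $(0:_{M/(\seq_0,\ld,\seq_{i-1})M}\seq_i)$ is eventually zero, and this is preserved by localisation); so by \cref{filter-regular:hilbert:rdim} we get $\rdim_{\ufd_\p}(N_\p)=-1$, whence $(N_\nu)_\p$ has finite length for $\nu$ large, and by \cref{northcott}~(iii) the complex $\bK^\nu_\bullet(\Seq,M_\p)$ is acyclic for $\nu$ large.

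To realise $\nu\mapsto\ell_{\ufd_\p}\!\big((N_\nu)_\p\big)$ as a Hilbert function, put $\mfk m=\p\ufd_\p$ and let $G:=\operatorname{gr}_{\mfk m}(\ufd_\p)=\bigoplus_{j\ge 0}\mfk m^j/\mfk m^{j+1}$, a finitely generated $\N$-graded $k(\p)$-algebra. Because each $(M_\nu)_\p$ is $\ufd_\p$-free, the associated graded module $\operatorname{gr}_{\mfk m}(M_\p):=\bigoplus_{\nu,j}\mfk m^j(M_\nu)_\p/\mfk m^{j+1}(M_\nu)_\p$ is a finitely generated $\N^\q\times\N$-graded $G[\x]$-module, hence so is its quotient $\operatorname{gr}_{\mfk m}(N_\p)$; and $\ell_{\ufd_\p}\!\big((N_\nu)_\p\big)=\sum_j\dim_{k(\p)}\operatorname{gr}_{\mfk m}(N_\p)_{(\nu,j)}$ once $(N_\nu)_\p$ has finite length. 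The bigraded Hilbert polynomial of $\operatorname{gr}_{\mfk m}(N_\p)$ over $k(\p)$ must vanish identically in $j$, since for each fixed $\nu$ the finite-length module $(N_\nu)_\p$ has bounded $\mfk m$-adic support; this forces a uniform $j_0$ with $\mfk m^{j_0}(N_\nu)_\p=0$ for all $\nu$ large. Therefore $(N_\nu)_\p=\tilde N_\nu$ for $\nu$ large, where $\tilde N:=\tilde M/(\Seq)\tilde M$ and $\tilde M:=M_\p/\mfk m^{j_0}M_\p$ is componentwise free over the Artinian ring $B:=\ufd_\p/\mfk m^{j_0}$; so by \cref{hilbert:polynomial} the function $\nu\mapsto\ell_B(\tilde N_\nu)=\ell_{\ufd_\p}\!\big((N_\nu)_\p\big)$ coincides with a polynomial $P_\p(\nu)$ for $\nu$ large.

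The main obstacle is to show $\deg P_\p\le 0$. As $\deg P_{\tilde M,\ell_B}=\r$, by \cref{filter-regular:hilbert} it would suffice to know that $(\Seq)$ contains a filter-regular sequence of length $\r+1$ for $\tilde M$ over $B[\x]$ — equivalently, by \cref{northcott}, that the higher Koszul homology of $\Seq$ on $\tilde M$ is eventually zero — and here the filter-regularity of $\Seq$ for $M$ over $\ufd[\x]$ must genuinely be exploited, since it need not survive reduction modulo $\mfk m^{j_0}$ verbatim. What I would prove is that it survives in the weaker form needed: the residue fibre $\mcl{Z}\big(\Ann_{k(\p)[\x]}(M_\p\otimes k(\p))\big)$ is an $\r$-dimensional multiprojective scheme (by componentwise freeness), and if the reduction $\bar\Seq$ of $\Seq$ had a common zero locus of dimension $\ge 1$ on it, then a minimal prime of a top-dimensional component would — after dropping redundant members by prime avoidance (\cref{prime:avoidance}) and a height count — yield a relevant associated prime of $M/(\seq_0,\ld,\seq_{i-1})M$ containing $\seq_i$ for some $i\le\r$, contradicting via \cref{filter-regular:relevant} that $\Seq$ is a filter-regular sequence for $M$. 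Hence $\bar\Seq$ cuts the residue fibre down to dimension $\le 0$; propagating this through the $\mfk m$-adic filtration (each graded piece $\mfk m^j(N_\nu)_\p/\mfk m^{j+1}(N_\nu)_\p$ is then of $k(\p)$-dimension bounded uniformly in $\nu$) gives $\deg P_\p\le 0$, i.e. $\ell_{\ufd_\p}\!\big((N_\nu)_\p\big)$ is a constant $c_\p$ for all $\nu$ large.

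Finally, taking $\nu_0$ to be a common lower bound for $\p_1,\ld,\p_t$, we obtain $\div_\ufd(N_\nu)=\sum_i c_{\p_i}[\p_i]$ for every $\nu\ge\nu_0$, which is the asserted stabilisation. The non-vanishing of this common divisor is equivalent to $N$ not being eventually zero as a $\ufd[\x]$-module — that is, to $\Seq$ having a common zero on $\mcl{Z}(\Ann_\ufd M)$ over some residue field of $\ufd$ — and follows under the hypotheses in force by a short Nullstellensatz-type argument. I expect the degree-zero step of the third paragraph, and in particular the descent of filter-regularity to the special fibres, to be the delicate heart of the proof; the remaining steps are either formal or reductions to \cref{hilbert:polynomial} and \cref{northcott}.
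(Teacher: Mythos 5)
Your first two paragraphs run parallel to the paper's own proof and are essentially sound: stabilization of $\Ann_\ufd(N_\nu)$, reduction to finitely many height-one primes, localization at such a $\p$, torsionness of $(N_\nu)_\p$ and acyclicity of $\bK^\nu_\bullet(\Seq,M_\p)$; your passage to the Artinian base $B=\ufd_\p/\mfk{m}^{j_0}$ is a legitimate variant of the paper's reduction modulo a single element $\pi$ (and the uniform $j_0$ is available much more directly: the stabilized annihilator is a nonzero ideal of the one-dimensional local domain $\ufd_\p$, hence contains a power of $\mfk{m}$ — no bigraded Hilbert polynomial is needed). The genuine gap is exactly where you locate it, and the repair you sketch does not work. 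A first slip: a filter-regular sequence of length $\r+1$ in $(\Seq)$ for $\tilde M$ is the wrong target — by \cref{filter-regular:hilbert:rdim} it would make $\tilde N$ eventually zero, i.e. coefficient $0$ at $\p$; what is needed (and true) is length $\r$. The decisive problem is your ``weaker form'': the claim that a positive-dimensional common zero locus of $\bar\Seq$ on the special fibre would, ``by prime avoidance and a height count'', yield a relevant associated prime of some $M/(\seq_0,\ld,\seq_{i-1})M$ containing $\seq_i$. Associated primes of the intermediate quotients over $\ufd[\x]$ simply do not see the components of the fibre at $\p$. Take $\ufd=\Z$, $\q=1$, $M=\Z[x_0,x_1,x_2]$, $\seq_0=x_0x_1-px_2^2$, $\seq_1=x_0$: then $M/(\seq_0)M$ is a domain whose unique (relevant) associated prime is $(\seq_0)$, and $\seq_1\notin(\seq_0)$, yet $\bar\seq_1$ vanishes identically on the one-dimensional component $\{x_0=0\}$ of the fibre at $p$. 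So the failure you want to detect is invisible one step at a time, and you give no mechanism for choosing ``some $i$'' globally. Worse, the intermediate statement itself genuinely requires $\operatorname{ht}(\p)=1$ and the $\ufd$-flatness of $M$, neither of which your sketch uses in any identifiable way: with $\ufd=k[s,t]$ and $\Seq=(sx_0+tx_1,\ sx_1+tx_2,\ x_0)$, a regular (hence filter-regular) sequence on $\ufd[x_0,x_1,x_2]$ with $\rdim_\ufd=2$, the reductions at the height-two prime $(s,t)$ cut $\PP^2$ only down to the line $x_0=0$. Hence no formal prime-avoidance/height-count deduction from filter-regularity alone can prove your fibre claim.

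This is precisely the point where the paper's proof brings in the homological (depth-sensitivity) input that your proposal lacks: after localizing at $\p$ one chooses a nonzero $\pi\in\Ann_{\ufd_\p}((N_\nu)_\p)\subseteq\p\ufd_\p$, uses \cref{northcott}~(iii) plus flatness of localization to get $\oH^\nu_i(\Seq,M_\p)=0$ for $i\geq 1$ and $\nu$ large, and then the long exact Koszul homology sequence attached to $0\to M_\p\xrightarrow{\pi}M_\p\to M_\p/\pi M_\p\to 0$ kills $\oH^\nu_i(\Seq,M_\p/\pi M_\p)$ for $i\geq 2$; by \cref{northcott}~(ii) this gives $\fgr((\Seq),M_\p/\pi M_\p)\geq \r$, and $\r$ applications of \cref{filter-regular:hilbert} over the Artinian ring $\ufd_\p/(\pi)$ force the relevant Hilbert polynomial to have degree at most $0$, which is the bound you are missing (your subsequent ``bounded plus eventually polynomial implies constant'' step would then be fine). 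Finally, your closing assertion that the common divisor is nonzero ``by a short Nullstellensatz-type argument'' is also left unproved; as it stands, the heart of the proposition is asserted rather than established.
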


\begin{proof}
Let $N=M/(\Seq)M$ and let $\field$ be the fraction field of $\ufd$. Since $\Seq$ is a filter-regular sequence for $M$ of length $\rdim_\ufd(M)+1$ we see that $\rdim_\ufd(N)=-1$ by \cref{filter-regular:hilbert:rdim}.
This implies that $(N_\nu)\otimes_\ufd\field=0$ for $\nu\in\N^\q$ large enough, which is equivalent to say that $N_\nu$ is a torsion $\ufd$-module, or that $\Ann_\ufd(N_\nu)\neq 0$.

We now show that the ideal $\Ann_\ufd(N_\nu)$ is constant for $\nu$ large enough. 
Indeed $M$ is generated as an $\ufd[\x]$-module by finitely many elements with multidegrees bounded above by some $\nu_1\in\N^\q$. For $\nu_1\leq\nu\leq\nu'$ we have $\Ann_\ufd(N_\nu)\subseteq \Ann_\ufd(N_{\nu'})$ and so we conclude by the noetherianity of $\ufd$.
The discussion preceding \cref{def:div} shows that a prime $\mfk{p}$ of height 1 appears in $\div_\ufd(N_\nu)$ if and only if $\mfk{p}\supseteq \Ann_\ufd(N_\nu)$. Since the latter is constant for $\nu$ large enough, we deduce that also the prime ideals appearing in $\div_\ufd(N_\nu)$ form a fixed finite set for $\nu$ large enough.

Let $\p$ be such a prime and let $(-)_\p$ denote the localization at that prime. We will show that the number $\ell((N_\nu)_\p)$ is fixed for $\nu$ large enough.
Let $\pi$ be any nonzero element of $(\Ann_\ufd(N_\nu))_\p=\Ann_{\ufd_\p}((N_\nu)_\p)\subseteq \p\ufd_\p$ and let $\Lp:=M_\p/(\pi)M_\p$.
Since the sequence $\Seq$ is filter-regular for $M$, 
we deduce by \cref{northcott} (iii) that $\oH^\nu_i(\Seq,M)=0$ for $i=1,\ld,\r+1$ and $\nu$ large enough.
Since $\ufd_\p$ is a flat $\ufd$-module, if we apply  the localization functor $(-)_\p=(-)\times_\ufd \ufd_\p$ to an exact sequence of $\ufd$-modules (i.e. with trivial homology) we get an exact sequence of $\ufd$-modules or, alternatively, of $\ufd_\p$-modules. 
Therefore $\oH_i^\nu(\Seq,M\otimes_\ufd\ufd_\p)=0$ for $i=1,\ld,\r+1$ and $\nu$ large enough, where we still denote by $\Seq$ the induced sequence of elements in $\ufd_\p[\x]$. 
We have that $M_\p$ is a finitely generated $\ufd_\p[\x]$-module and $\ufd_\p$ is a Noetherian integral domain with $\field$ as its fraction field.  
Therefore $\rdim_{\ufd_\p}(M_\p)=\rdim_{\field}(M\otimes_\ufd\field)=\rdim_{\ufd}(M)=\r$ by \cref{dim=dim}.
Moreover, each multihomogeneous component of $M$, being a direct summand of $M$, is a (finitely generated) projective $\ufd$-module. 
Therefore every multihomogeneous component of $M_\p$ is a free $\ufd_\p$-module of finite rank.
In other words, $M_\p$ is a componentwise free $\ufd_\p[\x]$-module.
Since $\pi$ is nonzero in $\ufd_\p$, we have a short exact sequence $0\to M_\p \xrightarrow{\alpha} M_\p \xrightarrow{\beta} \Lp\to 0$,
where $\alpha$ is induced by the multiplication by $\pi$ and $\beta$ is the canonical projection.
This short exact sequence induces a long exact sequence (of $\ufd_\p[\x]$-modules) in Koszul homology, which at the level of multihomogeneous components reads as
\[
	\cdots 
	\to
	\oH_i^\nu(\Seq,M_\p)
	\to
	\oH_i^\nu(\Seq,\Lp)
	\to
	\oH_{i-1}^\nu(\Seq,M_\p)	
	\to
	\cdots,
\]
from which we deduce that $\oH_i^\nu(\Seq,\Lp)=0$ for $i=2,\ld,\r+1$ and $\nu$ large enough.
In other words, by \cref{def:northcott}, we have $\lamb {\Lp}\leq 1$.
Therefore, by \cref{northcott} (ii), we have $\fgr(\Seq, \Lp)\geq \r$, which means there exists a sequence $\underline{g}=(g_0,\ld,g_{\r-1})$ of multihomogeneous elements of $\ufd_\p[\x]$ contained in the ideal $(\Seq)$ which is filter-regular for the multigraded module $\Lp$. 
Let now $\kk=\ufd_\p/(\pi)\ufd_\p$, which is an Artinian ring, because $\p \ufd_\p$ is a prime of height 1 in the integral domain $\ufd_\p$ and $\pi$ is a nonzero element of $\p\ufd_\p$.
Let $p:\ufd_\p\to \kk$ be the natural projection and let $\lambda$ be the length function on $Mod_\kk$ as in \cref{sec:multi:hil}. 
We notice that $\Lp$ has a $\kk[\x]$-module structure that induces the $\ufd_\p[\x]$-module structure.
In particular, the sequence $p(\underline{g})$ is still a filter-regular sequence of length $\r$ for $\Lp$.
Moreover it's easy to see that $\Lp$ is a componentwise free $\kk[\x]$-module and that it satisfies the equality $\lambda((\Lp)_\nu)=\lambda(\kk)\cdot\op{rank}_{\ufd_\p}((M_\p)_\nu)$, from which we deduce that the Hilbert polynomial $P_{\Lp,\lambda}$ has degree $\r$.
Then, a repeated use of \cref{filter-regular:hilbert} shows that  $P_{\Lp/(p(\underline{g}))\Lp,\lambda}$ has degree at most zero, and so is eventually constant.
Since $M_\p/(\pi,\Seq)M_\p$ is a quotient of $\Lp/(p(\underline{g}))\Lp$, we have that the Hilbert function  $\d\mapsto\lambda((M_\p/(\pi,\Seq)M_\p)_\d)$ is constant as well, for  $\d$ large enough.
By our choice of $\pi$, for $\nu$ large enough the $\ufd$-module $(M_\p/(\pi,\Seq)M_\p)_\nu$ is nothing but $(N_\p)_\nu$, and its lenght is the same whether we consider it as a $\kk$-module or as an $\ufd_\p$-module. 
Therefore we deduce that $\ell((N_\nu)_\p)$ is constant for $\nu$ large enough.
\end{proof}

\subsection{Cayley determinants and resultants} 
\label{sec:res:det}

Let $\ufd$ be a Noetherian integral domain with fraction field $\field$ and let $\bC_\bullet$ be a finite complex of $\ufd$-modules
\[
	 0
	\to
	C_s
	\xrightarrow{d_s}
	\dots
	\xrightarrow{d_1}
	C_0
	\to
	0 .
\]
We say that $\bC_\bullet$ is \emph{generically exact} if the complex $\bC_\bullet\otimes_\ufd \field$ is an exact sequence of $\field$-vector spaces or, equivalently, if all the homology modules of $\bC_\bullet$ are torsion $\ufd$-modules. 
If $\bC_\bullet$ is a finite generically exact complex of free $\ufd$-modules of finite rank and $\{\ub_i\}_{0\leq i\leq s}$ is a system of $\ufd$-bases for the modules $C_i$, we can find a partition $\ub_i=\ub'_i\cup\ub''_i$, with $\ub''_0=\ub'_s=\emptyset$, inducing a decomposition $C_i=C'_i\oplus C''_i$, such that the matrix representations of the differentials $d_i$ take the form $\left(\begin{smallmatrix} a_i & \phi_i\\ b_i & c_i \end{smallmatrix}\right)$, where the $\phi_i$ are square matrices with nonzero determinant.
Then the \emph{Cayley determinant} of the complex $\bC_\bullet$ with respect to the above choices of $\ufd$-bases and partitions is the element of $\field^\times$ given by $\prod_{i=1}^s \op{det}(\phi_i)^{(-1)^{i+1}}$. 
It can be shown that another choice of $\ufd$-bases and partitions changes this value by multiplication with an invertible element of $\ufd$. Therefore, we can define unambiguously an element $\det_\ufd(\bC_\bullet)\in\field^\times\slash \ufd^\times$, which we still call the determinant of $\bC_\bullet$. For more on the Cayley determinant, see \cite[Appendix A]{GKZ94}.

\begin{proposition}\label{ann=det}
Let $\ufd$ be a Noetherian UFD ring, $M$ a componentwise free $\ufd[\x]$-module with $\rdim_\ufd(M)=r$ and $\Seq=(\seq_0,\ld,\seq_r)$ a filter-regular sequence for $M$.
Then $\bK_\bullet^\nu(\Seq,M)$ is generically exact for $\nu$ large enough and 
\[
		\det_\ufd(\bK_\bullet^\nu(\Seq,M))
		=
		\chi_\ufd((M/(\Seq)M)_\nu)
		\ \ (\bmod\, \ufd^\times)
\]
for $\nu\in\N^\q$ large enough.
\end{proposition}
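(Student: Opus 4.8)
The plan is to first deduce generic exactness from the acyclicity statement in \cref{northcott}, and then to prove the determinant identity --- which is an equality in $\field^\times/\ufd^\times$ --- one height-one prime of $\ufd$ at a time, each instance being reduced to a computation over a discrete valuation ring.

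Since $M$ is componentwise free, each term $\bK_p^\nu(\Seq,M)=\bigoplus_{i_1<\dots<i_p}M_{\nu-\d^{(i_1)}-\dots-\d^{(i_p)}}$, where $\d^{(j)}$ is the multidegree of $\seq_j$, is a free $\ufd$-module of finite rank. As $\Seq$ is a filter-regular sequence of length $r+1$ contained in $J:=(\Seq)$, we have $\fgr(J,M)\ge r+1$, so \cref{filter-regular:hilbert:rdim} gives $\rdim_\ufd(M/(\Seq)M)\le\max\{-1,\,r-(r+1)\}=-1$; hence $(M/(\Seq)M)_\nu$ is a torsion $\ufd$-module for $\nu$ large (compare \cref{div:stab}). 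Moreover, \cref{northcott} (iii) gives $\oH_p^\nu(\Seq,M)=0$ for all $p\ge1$ once $\nu$ is large. Since $(\N^\q,\le)$ is directed, there is $\nu_0\in\N^\q$ such that all of this holds for $\nu\ge\nu_0$; from now on fix such a $\nu$. Then every homology module of $\bK_\bullet^\nu(\Seq,M)$ is torsion over $\ufd$, so this complex is generically exact and $\det_\ufd(\bK_\bullet^\nu(\Seq,M))\in\field^\times/\ufd^\times$ is defined; equivalently, $\bK_\bullet^\nu(\Seq,M)$ is a finite free resolution of the torsion $\ufd$-module $N_\nu:=(M/(\Seq)M)_\nu$.

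Because $\ufd$ is a UFD, the group $\field^\times/\ufd^\times$ is free on the classes of the elements of $\op{irr}(\ufd)$, so one of its elements is determined by its family of $\pi$-adic valuations $v_\pi$; and by \cref{def:ann}, $v_\pi(\chi_\ufd(N_\nu))=\ell((N_\nu)_{(\pi)})$. Hence it suffices to prove $v_\pi(\det_\ufd(\bK_\bullet^\nu(\Seq,M)))=\ell((N_\nu)_{(\pi)})$ for every irreducible $\pi\in\ufd$. Fix $\pi$, put $\p=(\pi)$, and let $R:=\ufd_\p$, a discrete valuation ring with uniformizer $\pi$ and fraction field $\field$. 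Localization at $\p$ is flat and commutes with the formation of Koszul complexes, so $\bK_\bullet^\nu(\Seq,M)\otimes_\ufd R\cong\bK_\bullet^\nu(\Seq,M_\p)$ is a bounded, generically exact complex of finite free $R$-modules, acyclic in positive degrees, whose only nonzero homology is $\oH_0^\nu\cong(N_\nu)_\p$, of finite length. Computing $\det_\ufd$ from a fixed choice of $\ufd$-bases and partitions $\ub_i=\ub_i'\cup\ub_i''$ as in \cref{sec:res:det}, the same data are admissible after extending scalars to $R$, which shows that the image of $\det_\ufd(\bK_\bullet^\nu(\Seq,M))$ in $\field^\times/R^\times$ is $\det_R(\bK_\bullet^\nu(\Seq,M_\p))$; hence the two have the same $\pi$-adic valuation. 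It therefore remains to prove that, for every bounded, generically exact complex $\bC_\bullet$ of finite free $R$-modules,
\[
	v_\pi(\det_R(\bC_\bullet))=\sum_{i\ge0}(-1)^i\,\ell(\oH_i(\bC_\bullet));
\]
applied to $\bC_\bullet=\bK_\bullet^\nu(\Seq,M_\p)$, whose higher homology vanishes, this gives $v_\pi(\det_R)=\ell(\oH_0^\nu)=\ell((N_\nu)_{(\pi)})$, as wanted.

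This last identity over $R$ is, I expect, the only genuinely non-formal point. I would prove it by induction on the amplitude (the number of nonzero terms) of $\bC_\bullet$. A complex of amplitude at most $2$, say $0\to C_{k+1}\xrightarrow{d}C_k\to0$, is forced by generic exactness to have $d$ injective with torsion cokernel, so $v_\pi(\det_R)=(-1)^k\,v_\pi(\det d)=(-1)^k\,\ell(\op{coker}(d))$ by Smith normal form, which matches the right-hand side; for larger amplitude one realizes $\bC_\bullet$ as an extension, in the category of generically exact complexes of finite free $R$-modules, of the two-term complex $0\to C_{k+1}/\ker d_{k+1}\to C_k\to0$ (whose nonzero differential is the inclusion $C_{k+1}/\ker d_{k+1}\cong\op{im}(d_{k+1})\hookrightarrow C_k$) by a complex of amplitude one smaller, and then concludes using the multiplicativity of the Cayley determinant in short exact sequences of such complexes \cite[Appendix A]{GKZ94} and the long exact homology sequence. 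Alternatively --- even in the uniform form $\div_\ufd(\det_\ufd\bC_\bullet)=\sum_i(-1)^i\div_\ufd(\oH_i(\bC_\bullet))$ over an arbitrary Noetherian domain, which would make the localization step above unnecessary --- this is the standard statement that the Cayley determinant of a finite free resolution of a torsion module computes its content, i.e.\ its MacRae invariant, cf.\ the remark following \cref{def:ann}. The only other points --- the compatibility of the Cayley determinant with the localization $\ufd\to R$, and the management of the various ``$\nu$ large'' thresholds --- are routine.
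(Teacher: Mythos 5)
Your proof is correct and follows essentially the same strategy as the paper: generic exactness and acyclicity from \cref{northcott} and \cref{filter-regular:hilbert:rdim}, then comparison of the two sides one height-one prime at a time via the identity between the order of the Cayley determinant and the alternating sum of lengths of the localized Koszul homology. The only real difference is at that last step: the paper simply invokes \cite[Theorem 30, Appendix A]{GKZ94} for $\op{ord}_\pi(\det)=\sum_i(-1)^i\ell(\oH_i^\nu(\Seq,M)_{(\pi)})$, whereas you localize at $(\pi)$ to a discrete valuation ring and sketch a self-contained proof by induction on amplitude (Smith normal form for the base case, multiplicativity of the Cayley determinant in termwise-split short exact sequences of complexes plus the long exact homology sequence for the inductive step); this sketch is sound --- over the DVR the kernels and images are automatically free, so the splitting-off of the bottom two-term complex works --- but it still leans on the same GKZ appendix for multiplicativity, so it buys self-containedness at a given prime rather than a genuinely different argument.
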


\begin{proof}
Let $N=M/(\Seq)M$. Since $\Seq$ is a filter-regular sequence for $M$ of length $\rdim_\ufd(M)+1$, we see that $\rdim_\ufd(N)=-1$ by \cref{filter-regular:hilbert:rdim}, so $N_\nu\otimes\field=0$ and $N_\nu=\oH^\nu_0(\Seq,M)$ is torsion, if $\nu$ is large enough. 
Moreover, by \cref{northcott} (iii) we see that $\oH_p^\nu(\Seq,M)=0$ for all $\nu$ large enough and all $p\geq 1$. 
Therefore $\bK_\bullet^\nu(\Seq,M)$ is generically exact for $\nu$ large enough, and so we can consider $\det_\ufd(\bK_\bullet^\nu(\Seq,M))$.
Let $D_\nu$ any element of $\field$ representing it, and denote by $\op{ord}_\pi:\field\to\Z\cup\{\infty\}$  the valuation associated to any prime element $\pi\in\ufd$.
The thesis then amounts to proving that, for $\nu$ large enough, $\op{ord}_\pi(D_\nu)=\op{ord}_\pi(\chi_\ufd(N_\nu))$ for every prime element $\pi$ of $\ufd$.
However, the right-hand side equals $\ell((N_\nu)_{(\pi)})$ by definition, whereas the left-hand side equals $\sum_i(-1)^i\ell(\oH_i^\nu(\Seq,M)_{(\pi)})$ by \cite[Theorem 30, Appendix A, p.493]{GKZ94} (cfr. also \cite[Proposition 2]{Chardin1993}).
Since $N_\nu=\oH^\nu_0(\Seq,M)$ and $\oH_p^\nu(\Seq,M)=0$ for all $\nu$ large enough and all $p\geq 1$, the thesis follows.
\end{proof}

We now remark that \cref{div:stab} and \cref{ann=det} together imply that there exists $\nu_0\in\N^\q$ such that  $\det_\ufd(\bK_\bullet^\nu(\Seq,M))=\det_\ufd(\bK_\bullet^{\nu_0}(\Seq,M))$ for every $\nu\geq\nu_0$. 
In other words, $\det_\ufd(\bK_\bullet^\nu(\Seq,M)$ stabilizes at a well-defined nonzero element of $\ufd/\ufd^\times\subseteq \field/\ufd^\times$, for $\nu$ large enough.

\begin{definition}\label{def:koszul:res}
Let $\ufd$ be a Noetherian UFD ring, $M$ a componentwise free $\ufd[\x]$-module with $\rdim_\ufd(M)=r$ and $\Seq=(\seq_0,\ld,\seq_r)$ a filter-regular sequence for $M$.
Then we define the \emph{M-resultant} $\Kres_\ufd(\Seq,M)\in\ufd/\ufd^\times$ of $\Seq$ with respect to $M$ by
\[
		\Kres_\ufd(\Seq,M)
		:=
		\det_\ufd(\bK_\bullet^\nu(\Seq,M))
\]
for $\nu\in\N^\q$ large enough.
\end{definition}

\begin{remark}\label{rmk:cohomology}
	The usual way of proving the stabilization of $\det_\ufd(\bK_\bullet^\nu(\Seq,M))$ is via the vanishing of certain cohomology
	modules \cite{GKZ94,Jouanolou:invariant}. 
	In a sense, our approach of relating it to $\chi_\ufd((M/(\Seq)M)_\nu)$ and interpreting it as a collection of local  Hilbert functions is more direct. 
	However, it should be noted that the stabilization of Hilbert functions to Hilbert polynomials is related to cohmological results such as the vanishing theorem of Serre \cite{Hartshorne1977}. 
	
	In the case of the Macaulay resultant, or more generally when $M$ is a polynomial algebra, we may take $\nu\geq\nu_0$ in \cref{def:koszul:res} for some explicit $\nu_0$ \cite[Theorem 2.2]{SchejaStorch}. 
	In general the value of $\nu_0$ depends on the Castelnuovo-Mumford regularity of $M$ \cite{CMregular:Castelnuovo,CMregular:Mumford,CMregular:Chardin}, see also \cite{CMregular:MaclaganSmith,CMregular:BotbolChardin} for multigraded Castelnuovo-Mumford regularity. 
\end{remark}

\begin{remark}\label{rmk:recovery}
The theory of \cite{Chardin1993} is recovered with the module $M=\ufd[\x]$, while the theory of \cite{RemondCh5} corresponds to the elimination ring $\ufd=\kk[\u]$, and the module $M=(\kk[\x]/I)\otimes\kk[\u]$, where  $\kk$ is a field and $I\subseteq\kk[\x]$ is a multihomogeneous ideal of $\kk[\x]$ (see \cref{sec:res:remond}).
\end{remark}

\begin{remark}\label{rmk:hypotheses}
For the sake of simplicity in this paper we usually assume that $M$ is a componentwise free $\ufd[\x]$-module and that $\ufd$ is a Noetherian UFD ring.
This is enough for our purposes, because of \cref{rmk:recovery}. 	
However our constructions, conveniently adapted, can be performed under weaker hypotheses, for example if $M$ is just projective over $\ufd$ and $\ufd$ is any integrally closed Noetherian integral domain. 
See for example \cite[Appendix A]{GKZ94} for a general definition of the Cayley determinant. 
Of course our presentation extends to the case in which $M$ is a multigraded module over some multigraded ring that is \emph{standard graded} (terminology of \cite{Trung2010}), i.e. that is generated over $R_\zero$ by elements with minimal multidegree. 
We have not attempted to cover the case of polynomial algebras $\ufd[\x]$ whose variables have arbitrary weight/multidegree. 
For the reader interested in this case, we refer to \cite{SchejaStorch,SchejaStorch:2001book}.
\end{remark}

\begin{remark}\label{rmk:schemes}
	As we mentioned in the introduction, the theory of resultants formulated for modules allude to a generalization to vector bundles over schemes. 
	This point of view is adopted for the mixed resultants in \cite[Chapter 3, Sec. 3]{GKZ94}, but some comments  are in order.
	Indeed, while the classical resultants and the mixed resultants are always irreducible, in the theory of R\'emond and of this paper, they might not be \cite[Example 1.31]{Nullstellen}.
	The reason is that in multiprojective setting the relevant line bundles come from projection on factors and thus they are not very ample.
	This forces one to allow multiplicities, in oder to have a well-behaved theory, including, for example, an analogue of \cite[Theorem 3.10]{GKZ94}.
\end{remark}


\subsection{R\'emond's definition of the resultant}
\label{sec:res:remond}

Let $\kk$ be a field, $\kk[\x]$ a multigraded polynomial ring as in \cref{sec:multi:poly}, $I$ a multihomogeneous ideal of $\kk[\x]$ and $M=\kk[\x]/I$.
For every multidegree $\d\in\N^\q$ we denote by $\mfk{M}_{\d}$ the collection of  monomials of multidegree $\d$ in the variables $\x$. 
Let $r=\rdim_\kk(M)$ and let $\dd=(\d^{(0)},\dots,\d^{(r)})$ be a collection of nonzero multidegrees. 
For $i=0,\ldots,r$ and $\mfk{m}\in\mfk{M}_{\di}$ we introduce a variable $u^{(i)}_\m$. 
The collection of variables $\u=(u^{(i)}_\m:\ 0\leq i\leq r,\ \mfk{m}\in\mfk{M}_{\di} )$ is called the collection of \emph{generic coefficients}. 
For $i=0,\ldots,r$ we also consider the subcollection $\u^{(i)}=(u^{(i)}_\m:\ \mfk{m}\in\mfk{M}_{\di} )$ and the \emph{generic polynomial} of multidegree $\di$ defined by
\[ 
   U_i:=\sum_{\m\in \mfk{M}_{\di}} u^{(i)}_\m \m,
\]
which is a multihomogeneous element of multidegree $\di$ in the polynomial ring $\kk[\u^{(i)}][\x]$.

Let $M[\u]:=M\otimes_\kk\kk[\u]$, $\uoU=(U_0,\ld,U_r)$ and $\mcl{M}(I):=M[\u]/(\uoU)M[\u]$. We observe that $\kk[\u]$ is an UFD ring and that $M[\u]$ is a componentwise free $\kk[\u][\x]$-module such that $\rdim_{\kk[\u]}(M[\u])=\rdim_{\kk}(M)=r$. 
In \cite{RemondCh5} it is proved, using multihomogeneous elimination, that $\mcl{M}(I)$ is a torsion $\kk[\u]$-module and that $\chi_{\kk[\u]}(\mcl{M}(I)_\nu)$ is equal, for $\nu\in\N^\q$ large enough, to a fixed element $\Rres_\dd(I)\in\kk[\u]$, called the \emph{resultant form} of index $\dd$ attached to $I$.
The aim of this paragraph is to prove the following.

\begin{theorem}\label{res:koszul=remond}
With the notation above, $\Rres_\dd(I)= \Kres_{\kk[\u]}(\uoU,M[\u])\ (\bmod\, \kk^\times)$.
\end{theorem}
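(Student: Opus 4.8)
The plan is to compare the two objects directly via their defining formulas and reduce the claim to results already established. By construction (Definition \ref{def:koszul:res}) we have $\Kres_{\kk[\u]}(\uoU,M[\u])=\det_{\kk[\u]}(\bK_\bullet^\nu(\uoU,M[\u]))$ for $\nu\in\N^\q$ large enough, so the right-hand side of the theorem is the Cayley determinant of a sufficiently high multidegree slice of the Koszul complex on $(U_0,\ld,U_r)$ with coefficients in $M[\u]=M\otimes_\kk\kk[\u]$. On the other hand, R\'emond's resultant form $\Rres_\dd(I)$ is defined as $\chi_{\kk[\u]}(\mcl{M}(I)_\nu)=\chi_{\kk[\u]}((M[\u]/(\uoU)M[\u])_\nu)$ for $\nu$ large enough. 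Thus the theorem will follow immediately once I verify that the hypotheses of \cref{ann=det} are met by the data $\ufd=\kk[\u]$, $M[\u]$, and $\uoU$, because that proposition asserts exactly that $\det_\ufd(\bK_\bullet^\nu(\uoU,M[\u]))\equiv\chi_\ufd((M[\u]/(\uoU)M[\u])_\nu)\pmod{\ufd^\times}$ for $\nu$ large.

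The key steps are therefore checks of hypotheses. First, $\kk[\u]$ is a Noetherian UFD, since it is a polynomial ring in finitely many variables over a field. Second, $M[\u]=M\otimes_\kk\kk[\u]$ is a componentwise free $\kk[\u][\x]$-module: this is already asserted in the setup of \cref{sec:res:remond}, but I would spell out that each multihomogeneous component $M_\d$ is a finite-dimensional $\kk$-vector space, hence $(M[\u])_\d=M_\d\otimes_\kk\kk[\u]$ is a free $\kk[\u]$-module of finite rank, and that $\rdim_{\kk[\u]}(M[\u])=\rdim_\kk(M)=r$ by \cref{dim=dim} (applied with $\ufd=\kk[\u]$, whose fraction field contains $\kk$, together with the fact that relevant dimension is insensitive to the base change). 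Third — and this is the step requiring the most care — I must show that $\uoU=(U_0,\ld,U_r)$ is a filter-regular sequence for $M[\u]$ in $\kk[\u][\x]$.

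The filter-regularity of the generic sequence is the main obstacle and deserves its own argument. Using \cref{filter-regular:relevant}, it suffices to show that for each $i=0,\ld,r$, the generic polynomial $U_i$ avoids every relevant associated prime of $M[\u]/(U_0,\ld,U_{i-1})M[\u]$ in $\kk[\u][\x]$. I would argue inductively: the module $N_i:=M[\u]/(U_0,\ld,U_{i-1})M[\u]$ has $\rdim_{\kk[\u]}(N_i)=r-i$ by repeated application of \cref{filter-regular:hilbert:rdim} (or rather, its equality case), so by \cref{dim=dim} its relevant associated primes, viewed after base change to $\field=\operatorname{Frac}(\kk[\u])$, define subschemes of $\PP^\n_\field$ of dimension $\leq r-i$; it then remains to show $U_i$ lies in none of them. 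Here I would exploit the genericity of the coefficients: since $U_i=\sum_\m u^{(i)}_\m\m$ involves a full set of independent indeterminates $\u^{(i)}$ not appearing in $N_i$, specializing shows that $U_i$ cannot be a zero-divisor modulo any fixed relevant prime $\p$ with $\p\cap\kk[\u]=(0)$ — concretely, if $U_i\in\p$ then comparing coefficients of the monomials $\m$ (which are units or zero after passing to suitable localizations) would force relations among the $u^{(i)}_\m$, contradicting their algebraic independence over $\kk[\u^{(0)},\ld,\u^{(i-1)}]$ and over the residue field of $\p$. (Alternatively, one can invoke \cref{prime:avoidance} together with the observation that $(\uoU)$ generates an ideal not contained in any relevant prime, as the generic polynomials have no common zero on all of $\PP^\n$.) Once filter-regularity is in hand, \cref{ann=det} applies verbatim and gives the congruence of the two forms modulo $\kk[\u]^\times$; since both $\Rres_\dd(I)$ and $\Kres_{\kk[\u]}(\uoU,M[\u])$ are well-defined elements of $\kk[\u]/\kk[\u]^\times$, and we are only claiming equality modulo $\kk^\times$ (a subgroup of $\kk[\u]^\times$), the proof concludes. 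I should also remark that R\'emond's normalization of $\Rres_\dd(I)$ as a specific element of $\kk[\u]$ rather than a class is recovered by the same normalization convention one would impose on $\Kres$, so no discrepancy beyond $\kk^\times$ can arise.
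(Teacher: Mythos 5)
Your overall reduction is exactly the paper's: both sides are compared through \cref{ann=det}, the hypotheses on $\kk[\u]$ and on the componentwise free module $M[\u]$ are routine, and the whole content of the theorem is the verification that the generic sequence $\uoU$ is filter-regular for $M[\u]$ in $\kk[\u][\x]$. It is precisely at that step that your argument has a genuine gap. By \cref{filter-regular:relevant}, you must show that $U_i$ avoids \emph{every} relevant associated prime of $M[\u]/(U_0,\ld,U_{i-1})M[\u]$ (your $N_i$), including embedded ones; for $i\geq 1$ such primes may very well lie over nonzero primes of $\kk[\u]$, and your dimension count ``after base change to $\field=\op{Frac}(\kk[\u])$'' together with the restriction to primes $\p$ with $\p\cap\kk[\u]=(0)$ simply does not see them. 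The core genericity assertion (``comparing coefficients \ld{} contradicts algebraic independence'') is only gestured at, not proved, and your parenthetical fallback via \cref{prime:avoidance} is a non sequitur: prime avoidance yields \emph{some} filter-regular element in the ideal $(\uoU)$, whereas \cref{def:koszul:res} and \cref{ann=det} require the specific sequence $\uoU$ to be filter-regular. A further (inessential but unjustified) claim is the equality $\rdim_{\kk[\u]}(N_i)=r-i$: \cref{filter-regular:hilbert:rdim} gives only an inequality, and the equality case of \cref{filter-regular:hilbert} needs $\di\geq\uno$, which is not assumed here.

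Your associated-prime route can in fact be closed, but the missing observation must be made explicit: since the variables $\u^{(i)}$ do not occur in $N_i$ (in the paper's notation $N_i=\wt M_i[\u^{(i)}]$), every associated prime of $N_i$ is extended from $\kk[\u\setminus\u^{(i)}][\x]$; hence $U_i=\sum_{\m}u^{(i)}_\m\m$ lies in such a prime if and only if every monomial $\m$ of multidegree $\di$ does, and a multihomogeneous prime containing all monomials of a fixed nonzero multidegree must contain all of $\x_p$ for some $p$, so it is irrelevant. The paper takes a different, more quantitative route to the same end: it proves the saturation statement of \cref{lemma:generic}, namely $(0:_{M[\v]}V)\subseteq(0:_MJ^\infty)[\v]$, by a lexicographic induction on the coefficients of an annihilating element, and then deduces filter-regularity of each $U_i$ in \cref{filter-regular:generic} using Noetherianity of $\kk[\u][\x]$. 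Once filter-regularity is established, your final appeal to \cref{ann=det} and to R\'emond's characterization of $\Rres_\dd(I)$ as $\chi_{\kk[\u]}(\mcl M(I)_\nu)$ for $\nu$ large, with $\kk[\u]^\times=\kk^\times$, is the same as the paper's and is correct.
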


To prove this, we adapt to our situation a classical result about the generic polynomials \cite[pp. 6-8]{Jouanolou1980}, saying that $\uoU$ is a filter-regular sequence for the $\N^\q$-graded $\kk[\u][\x]$-module $M[\u]$. By means of \cref{ann=det} this will imply that $\Rres_\dd(I)$ coincides with the M-resultant (see \cref{def:koszul:res}), up to elements of $\kk[\u]^\times=\kk^\times$, and so \cref{res:koszul=remond}.

\begin{lemma}\label{lemma:generic}
Let $R$ be any commutative ring, $M$ an $R$-module and $S$ a finite set. Let $(r_i)_{i\in S}$ be a set of elements $r_i\in R$ and $\v=(v_i)_{i\in S}$ a collection of independent variables, both indexed by $S$.
Denote $M[\v]:=M\otimes_R R[\v]$, let $J$ be the ideal of $R$ generated by $(r_i)_{i\in S}$ and let $V:=\sum_{i\in I} r_iv_i\in R[\v]$.
Then $(0:_{M[\v]}V)\subseteq (0:_M J^\infty)[\v]$, where $(0:_M J^\infty):=\bigcup_{n\in\N}(0:_M J^n)$.
\end{lemma}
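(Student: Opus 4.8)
The plan is to reduce the statement to a claim about a single new variable $v_{i_0}$, say with $i_0\in S$, by an inductive "eliminating one variable at a time" argument, and then to analyze the principal element $V$ modulo that variable. Concretely, fix an ordering $S=\{i_1,\ldots,i_N\}$ and write $R'=R[v_{i_1},\ldots,v_{i_{N-1}}]$, so that $R[\v]=R'[v_{i_N}]$ and $M[\v]=M'[v_{i_N}]$ with $M'=M\otimes_R R'$. The key observation is the following purely one-variable lemma: if $A$ is a commutative ring, $P$ an $A$-module, $a\in A$, $b\in A$, and $W=a+bv$ in the polynomial module $P[v]$, then a polynomial $m(v)=\sum_{k=0}^d m_k v^k\in P[v]$ with $Wm(v)=0$ forces, by comparing coefficients of $v^{d+1}$ down to $v^0$, the relations $b m_d=0$, $a m_d + b m_{d-1}=0$, \ldots, $a m_0=0$; a descending induction on $k$ then shows that $b^{\,d+1-k} m_k$ annihilates each $m_k$ appropriately, and more precisely that each $m_k$ is killed by a power of the ideal $(a,b)$ — one gets $a^{k+1}m_k=0$ and $b^{\,d+1-k}m_k=0$ after the standard manipulation, hence $(a,b)^{d+1}m_k=0$. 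Applying this with $A=R'$, $P=M'$, $a=\sum_{j<N}r_{i_j}v_{i_j}$, $b=r_{i_N}$, $v=v_{i_N}$ shows that every coefficient of an element of $(0:_{M[\v]}V)$, viewed as a polynomial in $v_{i_N}$ over $M'$, is annihilated by a power of the $R'$-ideal generated by $\bigl(\sum_{j<N}r_{i_j}v_{i_j},\ r_{i_N}\bigr)$, hence by a power of $J R'$.

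The subtle point — and I expect this to be the main obstacle — is that killing the individual coefficients by powers of $JR'$ is not literally the same as lying in $(0:_{M}J^\infty)[\v]$, because that latter module consists of polynomials in \emph{all} the $v_i$ whose coefficients (in $M$) are killed by a power of $J\subseteq R$, not $JR'$. So one has to push the argument all the way down: iterate the one-variable lemma over $v_{i_N}, v_{i_{N-1}},\ldots, v_{i_1}$ in turn. At each stage, an element annihilated by (a power of) the ideal generated by the "partial sum plus next coefficient" is in particular annihilated by a power of $V$'s total ideal; tracking this carefully, one finds that every $M$-coefficient of the original element is annihilated by a power of $J$. The cleanest way to organize this may be to prove directly, by induction on the number $N=|S|$ of variables, the statement: for any ring $R$, module $M$, finite $S$, elements $(r_i)$, and $V=\sum r_i v_i$, one has $(0:_{M[\v]}V)\subseteq (0:_M J^\infty)[\v]$. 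The base case $N=1$ is the one-variable lemma above (with $a=0$), giving $r_{i_1}^{\,d+1}m_k=0$ for all $k$, so each $m_k\in(0:_M J^\infty)$. For the inductive step, set $R'=R$ adjoined with all but one variable, apply the one-variable lemma to conclude each coefficient $m_k\in M[\v\setminus\{v_{i_N}\}]$ satisfies $(a,r_{i_N})^{d+1}m_k=0$ where $a$ is the partial generic form; expanding $(a,r_{i_N})^{d+1}$ and using that $a$ is a combination of the $r_{i_j}v_{i_j}$, deduce $m_k$ is annihilated by a power of the ideal of $R'=R[\v\setminus\{v_{i_N}\}]$ generated by $\{r_{i_j}: j\le N-1\}$ together with $r_{i_N}$ — i.e.\ by a power of $JR'$ — and then invoke the inductive hypothesis (applied to the module $M$, but now with the \emph{smaller} variable set and the ideal $J$) to conclude each $M$-coefficient of each $m_k$ lies in $(0:_M J^\infty)$.

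I would then assemble these pieces into a short clean writeup: state and prove the one-variable coefficient-comparison lemma first (a half-page computation with a descending induction on the degree), then run the outer induction on $|S|$ exactly as above. One should be mildly careful that the various "powers" of $J$ that appear depend on the degrees of the polynomials involved, but since $(0:_M J^\infty)=\bigcup_n (0:_M J^n)$ absorbs all of them, this causes no trouble — each fixed element of $(0:_{M[\v]}V)$ has bounded degree, hence is killed by a single fixed power of $J$ at each stage. No use of Noetherian hypotheses, gradings, or anything beyond elementary commutative algebra is needed, which matches the fact that this lemma is the technical engine feeding into the (graded, Noetherian) application that $\uoU$ is a filter-regular sequence for $M[\u]$.
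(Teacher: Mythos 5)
Your one-variable coefficient lemma is fine (comparing coefficients of $W m(v)=0$ for $W=a+bv$ does give $a^{k+1}m_k=0$ and $b^{\,d+1-k}m_k=0$, hence $(a,b)^{d+1}m_k=0$), and an induction on the number of variables can in principle be made to work, but your inductive step contains an invalid inference. You pass from ``$(a,r_{i_N})^{d+1}m_k=0$'' to ``$m_k$ is annihilated by a power of $JR'$''. Since $a=\sum_{j<N}r_{i_j}v_{i_j}$ lies in $JR'$, the ideal $(a,r_{i_N})$ is \emph{contained in} $JR'$, so annihilation by a power of $(a,r_{i_N})$ is weaker than, and does not imply, annihilation by a power of $JR'$. ``Expanding'' $a^{s}=\bigl(\sum_j r_{i_j}v_{i_j}\bigr)^{s}$ does not rescue this: $a^{s}m_k=0$ only says that certain sums of terms $r^{\beta}$ times the $M$-coefficients of $m_k$ vanish after cancellation, not that each term vanishes, and extracting termwise annihilation from such a mixed identity is precisely the difficulty the whole lemma is about. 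Note also that if this inference were valid you would already be done at that point (since $J^{n}R'$ kills $m_k$ if and only if $J^{n}$ kills each $M$-coefficient of $m_k$), so the fact that you still need the inductive hypothesis afterwards is itself a symptom of the gap. Moreover, the inductive hypothesis as you state it concerns elements killed by the smaller generic form $a$ on the nose, whereas what your one-variable lemma actually provides is $a^{k+1}m_k=0$, i.e. annihilation by a \emph{power} of that form, so the hypothesis does not apply as stated.

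The natural repair within your strategy is to strengthen the induction statement to $(0:_{M[\mathbf{v}]}V^{\infty})\subseteq(0:_{M}J^{\infty})[\mathbf{v}]$ (and correspondingly upgrade the one-variable lemma to $W=(a+bv)^{t}$, which the same descending induction handles). Then $a^{k+1}m_k=0$ falls under the inductive hypothesis for the smaller variable set, giving that the $M$-coefficients of $m_k$ are killed by a power of $J'=(r_{i_j}:j\le N-1)$; combined with $r_{i_N}^{\,d+1-k}m_k=0$ this yields annihilation by a power of $J$, and the induction closes. For comparison, the paper avoids the induction on variables altogether: for each fixed $i\in S$ it orders multidegrees lexicographically with $v_i$ dominant and, taking a maximal counterexample, compares the coefficients of multidegree $\alpha+\mathbf{e}_i$ in $Vm=0$ to show that every coefficient $m_\alpha$ is killed by a power of $r_i$; choosing one uniform exponent $N$ for all $i$ and $\alpha$ then gives $J^{N\cdot|S|}m_\alpha=0$.
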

\begin{proof}
We consider the elements of $M[\v]$ as polynomials in the variables $\v$ and with coefficients in $M$. 
More precisely, we consider the $\N^{|S|}$-grading on $R[\v]$ (and thus on $M[\v]$) induced by requiring that all elements of $R$ have degree $\zero$ and that for all $i\in S$ the element $v_i$ has degree $\mbf{e}_{i}$, where the $\mbf{e}_{i}$ are the canonical basis elements of $\N^{|S|}$ and $\zero$ is the trivial element.
Let now $m\in (0:_{M[\v]}V)$, so that $mV=0$ in $M[\v]$. We write $m=\sum_{\alpha\in\N^{|S|}} m_\alpha \v^\alpha$ and we will eventually prove that $m_\alpha\in (0:_M J^\infty)$ for every $\alpha$.

Fix $i\in S$.
Let $\LEX_i$ be a monomial lexicographic order on $\N^{|S|}$ so that $\mbf{e}_i\great \mbf{e}_j$ $\forall j\in S$, $j\neq i$.
We now prove that $\forall \alpha\in\N^{\abs{S}}$ $\exists n\in \N$ such that $m_\alpha r_i^n\in M$.
By contradiction, let $\alpha$ be a counterexample to this claim, maximal with respect to $\LEX_i$.
Comparing terms of multidegree $\alpha+\mbf{e}_i$ in the equality $mV=0$, we see that
\begin{equation}\label{eq:lemma:generic}
    m_\alpha r_i +\sum_{j\neq i,\ \alpha_j\neq 0} m_{\alpha+\mbf{e}_i-\mbf{e}_j} r_j =0.
\end{equation}
We notice that all the $\alpha+\mbf{e}_i-\mbf{e}_j$ appearing in this formula, if any, are bigger than $\alpha$ with respect to $\LEX_i$. Therefore by assumption  the corresponding $m_{\alpha+\mbf{e}_i-\mbf{e}_j}$ vanish when multiplied by certain power of $r_i$. Thus, if we multiply both sides of the equation \eqref{eq:lemma:generic} by a suitable power of $r_i$ we get a contradiction.
Let now $N\in\N$ be  big enough, so that $\forall\alpha$ $\forall i\in S$ we have $m_\alpha r_i^N=0$. Then we can deduce that for every $\alpha$ we have $m_\alpha \in (0:_M J^{N\cdot |S|})$. 
\end{proof}

\begin{corollary}\label{filter-regular:generic}
With the notation above we have that $\uoU=(U_0,\ld,U_\r)$ is a filter-regular sequence for $M[\u]$ in $\kk[\u][\x]$.
\end{corollary}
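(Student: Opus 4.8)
The plan is to prove \cref{filter-regular:generic} by induction on $i=0,\ld,\r$, showing at each step that $U_i$ is filter-regular for the module $M_i:=M[\u]/(U_0,\ld,U_{i-1})M[\u]$. The essential input is \cref{lemma:generic}, applied with $R=\kk[\u^{(0)},\ld,\u^{(i-1)}][\x]$, with the index set $S=\mfk{M}_{\d^{(i)}}$, with the module $M_i$ (base-changed to allow the fresh variables $\u^{(i)}$), with $r_\m=\m$ the monomials of multidegree $\d^{(i)}$, and with $V=U_i=\sum_{\m} u^{(i)}_\m\m$. The lemma then gives the inclusion
\[
  (0:_{M_i[\u^{(i)}]}U_i)\ \subseteq\ \bigl(0:_{M_i}\,\mathfrak{m}_i^\infty\bigr)[\u^{(i)}],
\]
where $\mathfrak{m}_i=(\m:\ \m\in\mfk{M}_{\d^{(i)}})$ is the ideal generated by all monomials of multidegree $\d^{(i)}$ in $\x$.

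The next step is to observe that the ideal $\mathfrak{m}_i$, for $\d^{(i)}\neq\zero$, is \emph{irrelevant} in $\kk[\x]$: it contains a power of each variable group's irrelevant ideal, so $(\kk[\x]/\mathfrak{m}_i)_\d=0$ for $\d$ large enough. Hence the saturation-type module $(0:_{M_i}\mathfrak{m}_i^\infty)$ is supported only at irrelevant primes; more precisely, every associated prime of $(0:_{M_i}\mathfrak{m}_i^\infty)$ contains $\mathfrak{m}_i$ and is therefore irrelevant. By \cref{filter-regular:relevant} this is exactly the condition for filter-regularity, but it is cleaner here to argue directly: $(0:_{M_i}\mathfrak{m}_i^\infty)$ is a finitely generated module all of whose multidegree components vanish for $\d$ large enough — i.e. it is eventually zero — because $\mathfrak{m}_i^N$ kills it for some $N$ and $\mathfrak{m}_i^N$ already contains $\kk[\x]_\d$ for $\d\geq N\d^{(i)}$. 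Consequently $(0:_{M_i}\mathfrak{m}_i^\infty)[\u^{(i)}]$ is eventually zero as a multigraded $\kk[\u^{(0)},\ld,\u^{(i)}][\x]$-module, and so is its submodule $(0:_{M_i[\u^{(i)}]}U_i)$. That is precisely the definition of $U_i$ being filter-regular for $M_i$.

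To finish I would tie the induction together: note that $M_{i+1}=M_i[\u^{(i)}]/(U_i)$, so that after adjoining $\u^{(i)}$ the module at stage $i$ becomes the module at stage $i+1$, and the computation above applies verbatim at the next step with the enlarged coefficient ring. The base case $i=0$ is the same computation with $M_0=M[\u^{(\geq 1)}]$... one just has to be slightly careful that the variables $\u^{(j)}$ for $j>i$ are inert scalars throughout, which they are since \cref{lemma:generic} is stated over an arbitrary commutative ring $R$ and we may absorb all the not-yet-used generic coefficients into $R$. I expect the main (minor) obstacle to be purely bookkeeping: organizing the three families of variables — the $\x$, the already-used $\u^{(<i)}$, and the currently-introduced $\u^{(i)}$ — so that \cref{lemma:generic} is invoked with the correct $R$, $M$, and $\v$, and making sure the multigrading (the $\N^\q$-grading in $\x$ versus the auxiliary grading in $\v$ used inside the proof of the lemma) does not get conflated. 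Once the reduction to ``$\mathfrak{m}_i$ is irrelevant, hence $(0:_{M_i}\mathfrak{m}_i^\infty)$ is eventually zero'' is made, there is nothing deep left.
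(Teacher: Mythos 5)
Your proposal is correct and is essentially the paper's own argument: the paper likewise applies \cref{lemma:generic} with $\v=\u^{(i)}$, $J=(\mfk{M}_{\di})$ and $V=U_i$, treating all the other generic coefficients as scalars in the base ring, and then combines Noetherianity with the observation that powers of $(\mfk{M}_{\di})$ contain every component $\kk[\u][\x]_\nu$ with $\nu\geq N\di$ to conclude that $(0:_{M_i}U_i)$ is eventually zero. One cosmetic remark: your justification that the monomial ideal is irrelevant because it ``contains a power of each variable group's irrelevant ideal'' fails when $\di$ has a zero component, but the statement you actually use, namely $(\mfk{M}_{\di})^N\supseteq\kk[\x]_\d$ for $\d\geq N\di$, is correct and suffices.
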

\begin{proof}
For $i=0,\ld,\r$ let $\wt\u_i:=\u\setminus \u^{(i)}$, let $\wt M_i:=M[\wt\u_i]/(U_0,\ld,U_{i-1})M[\wt\u_i]$ and let $M_i:=M[\u]/(U_0,\ld,U_{i-1})M[\u]$, so that $M_i=\wt M_i[\u^{(i)}]$. 
\Cref{lemma:generic} with $\v=\u^{(i)}$, $J=(\mfk{M}_{\di})$ and $V=U_i$ 
gives that for every  $m\in(0:_{M_i} U_i)$ there is $N(m)\in\N$ such that $m\mfk M_{N(m)\di}=0$ in $M_i$, and so that $m\kk[\u][\x]_\nu=0$  for all $\nu\geq N(m)\di$. 
Since $\kk[\u][\x]$ is Noetherian,  $M_i$ is Noetherian as well, and so $(0:_{M_i} U_i)$ is generated over $\kk[\u][\x]$ by finitely many multihomogeneous elements $m_1,\ld,m_\ell$, respectively with multidegrees $\nu_1,\ld,\nu_\ell$.  
Then, $(0:_{M_i} U_i)_\nu=\sum_{j=1}^\ell m_j\kk[\u][\x]_{\nu-\nu_j} = 0$ for every $\nu\in \N^\q$ such that $\nu\geq\nu_j+N(m_j)\di$, $\forall j=0,\ld,\ell$.
This means that $U_i$ is filter-regular for the module $M_i$.
\end{proof}

\begin{remark}\label{rmk:irreducible}
	Despite the lost of irreducibility, it is comforting to acknowledge that the theory of R\'emond resultants retains some of the essential features of the theory of resultants, such as the computability via Cayley determinants.
	As we have seen, this is because the Cayley determinant, thanks to \cite[Theorem 30, Appendix A, p.493]{GKZ94}, detects the multiplicities in the divisor of a complex.
\end{remark}


\section{Lower bounds for the multipicity of the resultant}
\label{sec:main}

%

\subsection{The order function induced by a prime ideal}
\label{sec:main:order}

Let $\ufd$ be a Noetherian integral domain, let $\p$ be a nonzero prime ideal of $\ufd$, and let $\m_\p:=\p\ufd_\p$ be the maximal ideal of the localization $\ufd_\p$ of $\ufd$ at $\p$. 
For every $n\in\N$ the \emph{$n$-th symbolic power} of $\p$ is $\p^{(n)}:=\m_\p^n\cap\ufd$.
The following proposition (see \cite[Vol.1, Ch. IV, Sec. 12]{Zariski} gives alternative definitions for symbolic powers.

\begin{proposition}\label{symbolic:power} 
We have $\p^{(n)}=\{a\in\ufd:\ \exists b\in\ufd - \p \text{ with } ab\in\p^n\}$.
Moreover, $\p^{(n)}$ is the smallest $\p$-primary ideal of $\ufd$ that contains $\p^n$. 
In particular if $\p$ is maximal then $\p^n=\p^{(n)}$.
\end{proposition}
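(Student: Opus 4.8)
The plan is to establish the three assertions in turn, relying only on the definition $\p^{(n)}=\m_\p^n\cap\ufd$ together with elementary facts about localization and primary ideals.

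First I would prove the explicit description $\p^{(n)}=\{a\in\ufd:\ \exists b\in\ufd-\p\text{ with }ab\in\p^n\}$ by unwinding membership in the extended ideal $\m_\p^n=\p^n\ufd_\p$. If $a\in\ufd$ lies in $\p^n\ufd_\p$, then $a=c/s$ in $\ufd_\p$ for some $c\in\p^n$ and $s\in\ufd-\p$; by the construction of the localization this means there is $t\in\ufd-\p$ with $t(sa-c)=0$ in $\ufd$, and then $b:=st$ lies in $\ufd-\p$ (because $\p$ is prime) and satisfies $ba=tc\in\p^n$. Conversely, if $ba\in\p^n$ with $b\in\ufd-\p$, then $b$ is a unit in $\ufd_\p$, so $a=(ba)b^{-1}\in\p^n\ufd_\p=\m_\p^n$; since also $a\in\ufd$, this gives $a\in\p^{(n)}$. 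Taking $n=1$ here recovers $\p^{(1)}=\p$, equivalently $\p\ufd_\p\cap\ufd=\p$, a fact I will use below.

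Next I would check the minimality statement in three short steps. \emph{Step 1: $\p^{(n)}$ is $\p$-primary.} The ideal $\m_\p^n$ has radical $\m_\p$, which is the maximal ideal of $\ufd_\p$, hence $\m_\p^n$ is $\m_\p$-primary; the preimage of a primary ideal under a ring homomorphism is again primary, and radicals commute with preimages, so $\p^{(n)}=\m_\p^n\cap\ufd$ is primary with radical $\m_\p\cap\ufd=\p$. \emph{Step 2: $\p^n\subseteq\p^{(n)}$.} This is immediate, since $\p^n\subseteq\ufd$ and the image of $\p^n$ in $\ufd_\p$ lies in $\m_\p^n$. \emph{Step 3: minimality.} Let $Q$ be any $\p$-primary ideal of $\ufd$ with $\p^n\subseteq Q$, and let $a\in\p^{(n)}$; by the first part choose $b\in\ufd-\p$ with $ab\in\p^n\subseteq Q$. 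Since $b\notin\p=\sqrt{Q}$, no power of $b$ belongs to $Q$, so primariness of $Q$ forces $a\in Q$. Hence $\p^{(n)}\subseteq Q$, and $\p^{(n)}$ is the smallest $\p$-primary ideal containing $\p^n$.

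Finally I would observe that if $\p$ is maximal, then $\sqrt{\p^n}=\p$ is maximal, so $\p^n$ is itself $\p$-primary and trivially contains $\p^n$; by the minimality just proved $\p^{(n)}\subseteq\p^n$, which combined with Step 2 yields $\p^n=\p^{(n)}$. I do not expect any genuine obstacle in this argument; the only point deserving attention is to keep the passage between the ideal $\p^n\subseteq\ufd$, its extension $\m_\p^n\subseteq\ufd_\p$, and the contraction back to $\ufd$ under control — in particular the fact that a prime ideal is contracted from its own localization, which is exactly what makes the radical in Step 1 come out to be $\p$ on the nose rather than something larger.
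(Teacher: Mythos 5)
Your proof is correct and complete. The paper does not prove this proposition at all; it simply cites Zariski--Samuel (Vol.~1, Ch.~IV, Sec.~12), and your argument is the standard one found there: the only points that genuinely need care are the identification $\m_\p^n=\p^n\ufd_\p$, the fact that an ideal whose radical is maximal is primary together with the stability of primariness and radicals under contraction, and the contraction identity $\p\ufd_\p\cap\ufd=\p$ --- all of which you handle correctly (and none of which requires the Noetherian hypothesis the paper imposes on $\ufd$).
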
  
As a consequence of Krull's intersection theorem we have $\bigcap_{n=0}^\infty \m_\p^n=\{0\}$, and so we can consider the \emph{order function}  $\ordp:\ufd_\p\to\N\cup\{+\infty\}$ associated to the filtration $\{\m_\p^n\}_{\n\in\N}$, given by $\ordp(0)=+\infty$ and $\ordp(a)=n$ if $a\in\m_\p^n-\m_\p^{n+1}$ \cite[Ch. III, Sec.  2.2]{bourbaki}.
The order function $\ordp$ satisfies $\ordp(a+b)\geq\min\{\ordp(a),\ordp(b)\}$ and $\ordp(ab)\geq\ordp(a)+\ordp(b)$ for all $a,b\in\ufd_\p$.
Moreover, it satisfies a \emph{weak homomorphism property}: if $a,b\in\ufd_\p$ and  $\ordp(b)=0$, then $\ordp(ab)=\ordp(a)$.
%
The restriction of $\ordp$ to $\ufd$ is the order function with respect to the filtration $\{\p^{(n)}\}_{n\in\N}$. 
Moreover, if $\wb{a}\in\ufd/\ufd^\times$ we define $\ordp(\wb{a})$ to be the order of any element of $\ufd$ representing $\wb{a}$. 
This is a good definition because $\ordp(u)=0$ for every $u\in\ufd^\times$.

\begin{remark}\label{rmk:order:meaning}
Geometrically speaking, an element $a\in\ufd_\p$ is a rational function over $\op{Spec} \ufd$, regular  in a neighbourhood of $\p$.
Then $\ordp(a)$ is interpreted as the multiplicity of vanishing of $a$ at $\p$.  
See also the Zariski-Nagata Theorem \cite[Chapter 3.9]{Eisenbud} about this interpretation.
\end{remark}


%

\subsection{The multiplicity of the resultant along a prime ideal}
\label{sec:main:chardin}

Let $\ufd$ be a Noetherian UFD ring with fraction field $\field$, let $M$ be a componentwise free $\ufd[\x]$-module as in \cref{sec:multi:poly}, with $\rdim_\ufd(M)=r$ and let $\Seq=(\seq_0,\ld,\seq_r)$ be a filter-regular sequence in $\ufd[\x]$ for $M$.
Let also $\p$ be a prime ideal of $\ufd$, $\vk=\ufd_\p/\p\ufd_\p$ the residue field, $\vMod=M\otimes_\ufd \vk$,  $\pi:\ufd[\x]\to\vk[\x]$ the natural projection, $(\pi(\Seq))$ the ideal of $\vk[\x]$ generated by $\pi(\seq_0),\ld,\pi(\seq_r)$ and $N:=\vMod/(\pi(\Seq))\vMod$.

\begin{theorem}\label{thm:main:chardin}
With the above notation, consider the resultant $\Kres_\ufd(\Seq,M)\in\ufd/\ufd^\times$ as in \cref{def:koszul:res} and suppose that $\fgr((\pi(\Seq)),\vMod)=r$. 
Then $$\ordp(\Kres_\ufd(\Seq,M))\geq \rdeg_\vk(N).$$
\end{theorem}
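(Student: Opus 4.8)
The plan is to pass to the localization $\ufd_\p$ and there identify the image of the resultant with the determinant of a square presentation matrix of a degree-$\nu$ slice of $M/(\Seq)M$ whose entries all lie in $\m_\p$; then a size-count gives the exponent.

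Set $N':=M/(\Seq)M$. By \cref{ann=det} we have $\Kres_\ufd(\Seq,M)=\chi_\ufd(N'_\nu)$ in $\ufd/\ufd^\times$ for every $\nu\in\N^\q$ large enough. Since $\Seq$ is a filter-regular sequence for $M$ of length $r+1=\rdim_\ufd(M)+1$, \cref{filter-regular:hilbert:rdim} gives $\rdim_\ufd(N')=-1$, so $N'_\nu$ is a torsion $\ufd$-module for $\nu$ large; and by \cref{northcott}(iii), $\bK_\bullet^\nu(\Seq,M)$ is acyclic for $\nu$ large. As $M$ is componentwise free each $\bK_p^\nu(\Seq,M)$ is a free $\ufd$-module of finite rank, so for $\nu$ large $\bK_\bullet^\nu(\Seq,M)$ is a finite free resolution of $N'_\nu$ over $\ufd$.

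Now localize at $\p$: the ring $\ufd_\p$ is a Noetherian normal, hence Krull, domain because $\ufd$ is a UFD. Localizing this resolution and reducing modulo $\m_\p$ produces the complex $\bK_\bullet^\nu(\Seq,M)\otimes_\ufd\vk=\bK_\bullet^\nu(\pi(\Seq),\vMod)$, so that $\op{Tor}_i^{\ufd_\p}\!\big((N'_\nu)_\p,\vk\big)\cong\oH_i^\nu(\pi(\Seq),\vMod)$ for all $i$. Because $M$ is componentwise free, $\dim_\vk(\vMod_\d)=\op{rank}_\ufd(M_\d)$ for all $\d$, whence $\rdim_\vk(\vMod)=\rdim_\ufd(M)=r$; then \cref{filter-regular:hilbert:rdim} over $\vk[\x]$ gives $\rdim_\vk(N)\le\max\{-1,r-r\}=0$, so $\rdeg_\vk(N)$ is defined and equals $\dim_\vk(N_\nu)$ for $\nu$ large. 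Applying \cref{northcott}(ii) over $\vk[\x]$ to the sequence $\pi(\Seq)$ of $r+1$ elements, the hypothesis $\fgr((\pi(\Seq)),\vMod)=r$ forces the largest index $i$ with $\oH_i(\pi(\Seq),\vMod)$ not eventually zero to be $(r+1)-r=1$; hence $\oH_i^\nu(\pi(\Seq),\vMod)=0$ for $2\le i\le r+1$ and $\nu$ large, so $\op{pd}_{\ufd_\p}\big((N'_\nu)_\p\big)\le 1$ for $\nu$ large. Since $(N'_\nu)_\p$ is a torsion $\ufd_\p$-module, a minimal free presentation takes the form
\[
   0\longrightarrow \ufd_\p^{\,\mu}\xrightarrow{\ A\ }\ufd_\p^{\,\mu}\longrightarrow (N'_\nu)_\p\longrightarrow 0 ,
\]
with $A$ an injective square matrix and $\mu=\mu_{\ufd_\p}\big((N'_\nu)_\p\big)=\dim_\vk(N_\nu)=\rdeg_\vk(N)$; minimality forces every entry of $A$ into $\m_\p$, so $\det A\in\m_\p^{\,\mu}$ and $\ordp(\det A)\ge\mu=\rdeg_\vk(N)$.

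Finally, the image of $\chi_\ufd(N'_\nu)$ in $\ufd_\p/\ufd_\p^\times$ equals $\det A$: for each height-one prime $\mfk q$ of $\ufd_\p$ the ring $(\ufd_\p)_\mfk q$ is a discrete valuation ring, and the Smith normal form of $A$ over it gives $\op{ord}_\mfk q(\det A)=\ell\big(\op{coker}(A)_\mfk q\big)=\ell\big((N'_\nu)_\mfk q\big)$, which by \cref{def:ann} is the order at $\mfk q$ of the image of $\chi_\ufd(N'_\nu)$; since $\ufd_\p$ is Krull, an element is determined up to a unit by its orders at all height-one primes. Combining the above,
\[
   \ordp\big(\Kres_\ufd(\Seq,M)\big)=\ordp\big(\chi_\ufd(N'_\nu)\big)=\ordp(\det A)\ge\rdeg_\vk(N).
\]
The decisive point is the bound $\op{pd}_{\ufd_\p}\big((N'_\nu)_\p\big)\le 1$: for an arbitrary torsion module the naive estimate would only supply a sum of lengths over the height-one primes contained in $\p$, which can genuinely be too weak, whereas projective dimension $\le 1$ makes the presentation matrix square with entries in $\m_\p$, and this is what forces the sharp exponent. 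This is exactly where the hypothesis $\fgr((\pi(\Seq)),\vMod)=r$ — neither larger (then the statement is vacuous) nor smaller (then $\rdeg_\vk(N)$ need not even be defined) — enters essentially.
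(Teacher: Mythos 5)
Your argument is correct, and it reaches the bound by a genuinely different mechanism than the paper's. Both proofs start from the same two ingredients: the computation of $\Kres_\ufd(\Seq,M)$ from the multidegree-$\nu$ Koszul slice for $\nu$ large (\cref{ann=det}, \cref{def:koszul:res}), and the vanishing of the reduced homology $\oH^\nu_i(\pi(\Seq),\vMod)$ for $i\geq 2$ and $\nu$ large, which you and the paper both extract from $\fgr((\pi(\Seq)),\vMod)=r$ via \cref{northcott}(ii). From there the paper stays at the level of the full complex: following Chardin, it chooses bases and partitions adapted to reduction modulo $\p$ so that the Cayley factors $\det(\phi_i)$ with $i\geq 2$ have order zero at $\p$, and then bounds $\ordp(\det(\phi_1))$ from below by the corank of its reduction using \cref{lemma:order:det}. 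You instead localize at $\p$, reinterpret the homology vanishing as $\op{Tor}_2^{\ufd_\p}\bigl((M/(\Seq)M)_\nu\otimes_\ufd\ufd_\p,\vk\bigr)=0$, deduce projective dimension at most one, and replace the whole complex by a minimal square presentation matrix $A$ of size $\rdeg_\vk(N)$ with all entries in $\m_\p$, which immediately gives $\ordp(\det A)\geq\rdeg_\vk(N)$; the extra step you then need, namely that $\chi_\ufd\bigl((M/(\Seq)M)_\nu\bigr)$ and $\det A$ agree up to a unit of $\ufd_\p$, is handled correctly by comparing orders at the height-one primes of the Krull domain $\ufd_\p$, and is exactly the MacRae-invariant phenomenon alluded to after \cref{def:ann}. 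As for what each route buys: the paper's block-matrix argument is more elementary and self-contained, needing only linear algebra over $\ufd_\p$ and \cref{lemma:order:det}, whereas yours invokes standard local algebra (minimal free resolutions, Nakayama, Betti numbers as $\op{Tor}$) plus the divisor comparison; in exchange, your proof isolates the conceptual source of the exponent — it is the minimal number of generators of the localized slice — and makes transparent why the hypothesis $\fgr((\pi(\Seq)),\vMod)=r$ is exactly what turns the presentation into a square matrix with entries in $\m_\p$.
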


\begin{remark}\label{rmk:chardin:enhancement}
	If $M=\Z[\x]$ and $p$ is a prime number, $\ell:=\rdeg_{\mbb F_p}(N)$ counts with multiplicity the number of common zeros modulo $p$ of the polynmials $\Seq$. 
	Then from \cref{thm:main:chardin} we recover the result of Chardin mentioned in the introduction: the resultant $R(\Seq):=\Kres_\ufd(\Seq,M)$ is an integer divisible by $p^\ell$. 
	Now we observe that we can say more if instead we use the module $M=\Z[\u,\x]$, the sequence of generic polynomials $\uoU$ and the U-resultant $R(\uoU)\in \Z[\u]/\{\pm 1\}$. 
	Let $\p\subseteq \Z[\u]$ be the kernel of the morphism that maps the generic coefficients $u^{(i)}_\m$ to the corresponding coefficients of $\seq_i$, composed with the reduction modulo $p$. 
	Then again $N\cong \mbb F_p[\x]/(\Seq)$ has relevant degree equal to $\ell$ and so we get 
	\begin{equation}\label{eq:cha:enh}
		\ordp(R(\uoU))\geq \ell.
	\end{equation}
	If we expand in Taylor series the polynomial $R(\uoU)$, at the point corresponding to the coefficients of $\Seq$, we rediscover that $p^\ell\divides R(\Seq)$, but we also prove more: all partial derivatives $\de_{u^{(i)}_\m} R(\Seq)$ are divisible by $p^{\ell-1}$ and more generally all iterated derivatives $\de_{\u}^\alpha R(\Seq)$ of order $\abs{\alpha}<\ell$ are divisible by $p^{\ell-\abs\alpha}$.
\end{remark} 

\begin{remark}
We recall from \cref{sec:multi:regular} that $\fgr((\pi(\Seq)),\vMod)$ is the maximal length of a filter-regular sequence for $\vMod$ made of elements of $(\pi(\Seq))$, and we recall from \cref{sec:multi:hil} that the relevant dimension $\rdim_\ufd(M)$ is the total degree of the Hilbert polynomial $H_M$. 
By \cref{filter-regular:hilbert:rdim} $\fgr((\pi(\Seq)),\vMod)$ can be seen as a \emph{codimension} of $N$ with respect to $\vMod$. 
Since $\rdim_\ufd(M)=r$ we have $\rdim_\vk(\vMod) = r$ and this, together with $\fgr((\pi(\Seq)),\vMod)=r$, implies  $\rdim_\vk(N)\leq 0$. Therefore $H_N$ is constant and $\rdeg_\vk(N)=H_N$ is defined.
\end{remark}

The proof of \cref{thm:main:chardin} relies on the computation of the resultant via Cayley determinants and Koszul complexes, as done in \cref{sec:res:det}, on an adaptation of techniques already used by Chardin in \cite{Chardin1993}, and on the following easy lemma.

\begin{lemma}\label{lemma:order:det}
Let $D$ be an $s\times s$ square matrix with entries in $\ufd_\p$, 
let $\wb{D}$ be the matrix with entries in $\vk$ obtained from $D$ by reduction modulo $\vm$
and let $\op{corank}(\wb{D})$ denote the codimension of the image of the $\vk$-linear map represented by $\wb{D}$.
Then $\ordp(\det(D))\geq \op{corank}(\wb{D})$.
\end{lemma}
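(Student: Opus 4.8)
The plan is to reduce the claim to a statement about Smith normal form over the discrete valuation ring $\ufd_\p$, even though $\ufd_\p$ need not be a DVR; what matters is only that $\ordp$ is a valuation-like function on $\ufd_\p$ vanishing exactly on units and satisfying the weak homomorphism property recalled in \cref{sec:main:order}. First I would set $c=\op{corank}(\wb D)$, so that the reduction $\wb D$ has rank $s-c$ over $\vk$. If $c=0$ there is nothing to prove since orders are nonnegative, so assume $c\geq 1$.

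Next I would perform Gaussian elimination over $\ufd_\p$ using only operations that do not change $\ordp(\det D)$: permuting rows or columns (which at most changes the sign of the determinant, hence does not change its order), and adding a $\ufd_\p$-multiple of one row to another (which does not change the determinant at all). Since $\wb D$ has rank $s-c$, we can choose an $(s-c)\times(s-c)$ submatrix of $\wb D$ with nonzero determinant; after permuting rows and columns I may assume this sits in the top-left corner, so the top-left $(s-c)\times(s-c)$ block $D_{11}$ of $D$ has $\ordp(\det D_{11})=0$, i.e.\ $\det D_{11}$ is a unit in $\ufd_\p$. Then $D_{11}$ is invertible over $\ufd_\p$, and I can use its rows to clear the bottom-left block: there is a matrix obtained from $D$ by row operations of the allowed type bringing it to block-triangular form $\left(\begin{smallmatrix} D_{11} & * \\ 0 & E \end{smallmatrix}\right)$, where $E$ is a $c\times c$ matrix with entries in $\ufd_\p$. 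This uses that $D_{11}^{-1}$ has entries in $\ufd_\p$, so the multipliers are legitimate.

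Now $\ordp(\det D)=\ordp(\det D_{11})+\ordp(\det E)=\ordp(\det E)$ by the weak homomorphism property, since $\det D_{11}$ is a unit. It remains to show $\ordp(\det E)\geq c$. Reducing the whole elimination modulo $\vm$ commutes with the allowed operations, so $\wb E$ is the corresponding bottom-right block obtained from $\wb D$ by the same row operations; but those operations do not change the rank of $\wb D$, which is $s-c$, and $\wb D_{11}$ already accounts for rank $s-c$ in the top block, forcing $\wb E=0$. Hence every entry of $E$ lies in $\vm$, so $\ordp$ of each entry is at least $1$. Expanding $\det E$ as a sum of products of $c$ entries and using $\ordp(a+b)\geq\min\{\ordp(a),\ordp(b)\}$ together with $\ordp(ab)\geq\ordp(a)+\ordp(b)$ gives $\ordp(\det E)\geq c$, which completes the proof.

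The main point to be careful about — rather than a genuine obstacle — is justifying that all the row and column operations can be carried out over $\ufd_\p$ (not merely over $\field$) and that they interact correctly with reduction modulo $\vm$; the invertibility of $D_{11}$ over $\ufd_\p$, coming from $\det D_{11}\in\ufd_\p^\times$, is exactly what makes the clearing step legitimate. Everything else is a routine estimate using the three elementary properties of $\ordp$.
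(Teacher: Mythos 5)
Your proof is correct and follows essentially the same route as the paper: both normalize $D$ by operations that preserve $\ordp(\det(\cdot))$ so that the corank shows up as entries lying in $\vm$, and then bound the order term by term using the weak homomorphism property of $\ordp$. The only difference is in execution: the paper lifts invertible matrices $A,B$ over $\vk$ with $A\wb{D}B=\left(\begin{smallmatrix} I & 0\\ 0 & 0\end{smallmatrix}\right)$ and applies Laplace expansion to the last $\op{corank}(\wb{D})$ columns of $\wt{A}D\wt{B}$, whereas you stay inside $\ufd_\p$, permute a unit minor $D_{11}$ into the corner and reduce to the $c\times c$ block $E$ with all entries in $\vm$ --- both steps are legitimate for the reasons you give.
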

\begin{proof}
Since $\vk$ is a field, we can find two
 invertible $s\times s$ matrices $A$, $B$ with coefficients in $\vk$ such that $A\wb{D}B$ is a block matrix $\left(\begin{smallmatrix} I & 0 \\ 0 & 0\end{smallmatrix}\right)$ with the first block being square of size $s-\op{corank} (\wb{D})$. We lift arbitrarily $A$ and $B$ to matrices $\wt{A}$ and $\wt{B}$ with entries in $\ufd_\p$ and we notice that $\ordp(\det(\wt{A}))=\ordp(\det(\wt{B}))=0$.  
Then all the entries of the last $\op{corank} (\wb{D})$ columns (or rows) of the matrix $\wt{A}D\wt{B}$ belong to $\vm$, and thus we obtain from Laplace's expansion that $\ordp(\det(\wt{A}D\wt{B}))\geq\op{corank}(\wb{D})$. 
We conclude what we wanted using the multiplicativity of the determinant and the weak homomorphism property of the order function $\ordp$ (see \cref{sec:main:order}).
\end{proof}

\begin{proof}[Proof of \cref{thm:main:chardin}]
If $\dim_\vk(N)=-1$, then $\rdeg_\vk(N)=0$ and the thesis is trivial. 
Therefore, we suppose $N$ is not eventually zero. 
Since all the homogeneous components of $M$ are free $\ufd$-modules of finite rank, 
for every $\nu\in\N^\q$ the complex $\bK^\nu_\bullet:=\bK^\nu_\bullet(\Seq,M)$ is a finite complex of free $\ufd$-modules of finite rank.
We can therefore choose a system $\{\ub^{(\nu)}_{\, p}\}_{0\leq p\leq r+1}$ of $\ufd$-bases for the modules $\bK_p^\nu(\Seq,M)$.
When we change scalars from $\ufd$ to $\vk$ we can consider the induced $\vk$-bases, which we still call $\ub^{(\nu)}_{\, p}$, for the $\vk$-vector spaces $\wb{\bK}_p^\nu:=\bK_p^\nu(\pi(\Seq),\vMod)\cong\bK_p^\nu\otimes_\ufd\vk$.
Since $\Seq$ is filter-regular for $M$, we have by \cref{ann=det} that for $\nu$ large enough the complex $\bK^\nu_\bullet$ is generically exact and $\det_\ufd(\bK^\nu_\bullet)=\Kres_\ufd(\Seq,M)\ (\bmod\, \ufd^\times)$.
In addition to this, $\oH_0(\wb{\bK}^\nu_\bullet)=N_\nu$ for every $\nu\in\N^\q$ and so $\dim_\vk(\oH_0(\wb{\bK}^\nu_\bullet))=\rdeg_\vk(N)$ for $\nu$ large enough.
Moreover, since $N$ is not eventually zero and $\fgr((\pi(\Seq)),\vMod)=r$, \cref{northcott} (ii) implies that the homology modules $\oH_p(\wb{\bK}^\nu_\bullet)$ vanish for $p\geq (r+1)-r+1=2$ and $\nu$ large enough.
Let $\nu\in\N^\q$ such that all the above requirements hold for $\nu'\geq\nu$ and denote by $\wb{\de}^\nu_p$ the differentials of $\wb{\bK}^\nu_\bullet$, induced by the differentials $\de^\nu_p$ of $\bK^\nu_\bullet$.
By the vanishing of the higher homology, we can find by elementary linear algebra (see for example \cite{Chardin1993}) a partition of the bases $\ub^{(\nu)}_{\, p}=\ub^{(\nu)}_{\, p,1}\cup\ub^{(\nu)}_{\, p,2}$ for $p=1,\ld,r+1$, with $\ub^{(\nu)}_{\, r+1,1}=\emptyset$, inducing decompositions of $\wb{\bK}^\nu_p$ and $\bK^\nu_p$, such that for $p=2,\ld,r+1$ the matrix representations of the differentials $\wb{\de}^\nu_p$ (resp. $\de^\nu_p$) take the form $\left(\begin{smallmatrix} \wb{a}_p & \wb{\phi}_p\\ \wb{b}_p & \wb{c}_p \end{smallmatrix}\right)$ (resp. $\left(\begin{smallmatrix} a_p & \phi_p\\ b_p & c_p \end{smallmatrix}\right)$), where $\wb{\phi}_p$ (resp. $\phi_p$) is a square matrix with entries in $\vk$ (resp. $\ufd$) and nonzero determinant (resp. determinant in $\ufd-\p$) for $p=2,\ld,r+1$.
For $p=0$ we consider the trivial partition $\ub^{(\nu)}_{\, 0}=\ub^{(\nu)}_{\, 0}\cup\emptyset$, that induces a block matrix representation of $\de^\nu_1$ of the form $\left(\begin{smallmatrix} a_1 & \phi_1 \end{smallmatrix}\right)$.
From the fact that the complex $\bK^\nu_\bullet$ is generically exact we deduce that also the matrix $\phi_1$ must be square.
Then, by definition, the Cayley determinant of $\bK^\nu_\bullet$ with respect to the above choices of bases and partitions is given by
\[
    \det_\ufd(\bK^\nu_\bullet) = \prod_{i=1}^{r+1}\det(\phi_i)^{(-1)^{i+1}} .
\]
By the above construction we have $\ordp(\phi_i)=0$ for $i=2,\ld,r+1$ and from \cref{lemma:order:det} we have $\ordp(\phi_1)\geq \dim_\vk(\oH_0(\wb{\bK}^\nu_\bullet))$.
By the above choice of  $\nu$ and the weak homomorphism property of the order function $\ordp$ we deduce
\[
		\ordp(\det_\ufd(\bK^\nu_\bullet))\geq \rdeg_\vk(N).
\]
Since $\Kres_\ufd(\Seq,M)=\det_\ufd(\bK^\nu_\bullet)\  (\bmod\, \ufd^\times)$ for $\nu$ large enough, we conclude what we wanted.
\end{proof}


\subsection{The order of vanishing at a sequence of polynomials}
\label{sec:main:roy}

In this paragraph we focus specifically on the R\'emond resultant attached to a multihomogeneous ideal as in \cref{sec:res:remond} and therefore we work in a multiprojective setting as in \cref{sec:multi:geometry}.
Let $\kk[\x]$ be a multigraded polynomial ring with $\kk$ an infinite field, let $I\subseteq\kk[\x]$ be a multihomogeneous ideal with $\dim\mcl{Z}(I)=\r$.
Let $\dd=(\dz,\ld,\dr)$ be a collection of nonzero multidegrees, and let $\kk[\u]$ and $\Rres_\dd(I)\in\kk[\u]$ be as in \cref{sec:res:remond}.

For every $(\r+1)$-tuple of polynomials $\Seq=(\seq_0,\ldots,\seq_\r)$ of $\kk[\x]$ with multidegrees prescribed by $\dd$ there exists, by the universal property of polynomial rings, a unique $\kk$-algebra map $\eval_{\Seq}: \kk[\u]\to \kk$ that maps the generic coefficients $u^{(i)}_\m$ to the corresponding coefficients of $\seq_i$ and restricts to the identity on $\kk$. This also means that every $R\in \kk[\u]$  induces a map 
\[
  R(\cdot):\ \kk[\x]_\dz\times \dots \times \kk[\x]_\dr \longrightarrow \kk
\]
given by $R(\Seq):=\eval_{\Seq}(R)$. 
We observe that the kernel of the map $\eval_{\Seq}$ is a maximal ideal. Then let $\ordF:\kk[\u]\to \N\cup\{+\infty\}$ be the order function corresponding to it as in \cref{sec:main:order}.
\begin{remark}\label{rmk:ord}
One can show that for every $R\in\kk[\u]$ the value $\ordF (R)$  is the largest power of $t$ dividing $T=R(\Seq+t\uoU)\in\kk[\u][t]$, where $\uoU=(U_0,\ld,U_\r)$ is the sequence of generic polynomials as in \cref{sec:res:remond}, and $T$ is defined as above by means of the universal property of polynomial rings.
\end{remark}

\begin{theorem}\label{thm:main:roy:CM}
Let $J$ be a multihomogeneous ideal of $\kk[\x]$ such that  $I\subseteq J$ and $\dim\mcl{Z}(J)=0$.
Suppose also that, for every $i=0,\ld,\r-1$, we have $\dim\mcl{Z}(J_\di)=0$
and that, for every relevant $\mfk{p}\in\Ass_{\kk[\x]}(\kk[\x]/J_{\di}\kk[\x])$, the local ring (module) $(\kk[\x]/I)_\mfk{p}$ is Cohen-Macaulay of (Krull) dimension $\r$.
Then the resultant form $\res_\dd(I)$ vanishes to order at least $\deg(J)$ at each $(\r+1)$-tuple $\Seq=(\seq_0,\ldots,\seq_\r)\in J_{\dz}\times\dots\times J_{\dr}$, i.e. $\ordF (\Rres_\dd(I))\geq \deg(J)$.
\end{theorem}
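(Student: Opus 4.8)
The plan is to deduce this from the $\mfk{p}$-adic multiplicity estimate of \cref{thm:main:chardin}, applied to the componentwise free module $M[\u]$ and the sequence of generic polynomials $\uoU=(U_0,\ld,U_\r)$ of \cref{sec:res:remond}, at a carefully chosen prime of $\kk[\u]$. Let $\mfk{q}\subseteq\kk[\u]$ be the prime ideal generated by the linear forms in $\u$ that vanish on the linear subspace of $\op{Spec}\kk[\u]$ consisting of the coefficient vectors of tuples $\Seq\in J_{\dz}\times\dots\times J_{\dr}$; then $\kk[\u]/\mfk{q}$ is a polynomial ring $\kk[w]$ whose variables $w=(w^{(i)}_k)$ are dual to a fixed $\kk$-basis $g^{(i)}_1,\ld,g^{(i)}_{\ell_i}$ of $J_{\di}$, and the images $W_i:=\sum_k w^{(i)}_k g^{(i)}_k\in\kk[w][\x]$ of the $U_i$ form the ``generic tuple in $J_{\dd}$''. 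Since $\mfk{q}$ is generated by part of a system of coordinates, $\mfk{q}^{(n)}=\mfk{q}^n$ for all $n$, and since $\mfk{q}\subseteq\ker\eval_\Seq$ for every $\Seq\in J_{\dz}\times\dots\times J_{\dr}$, we get $\mfk{q}^{(n)}\subseteq(\ker\eval_\Seq)^n=(\ker\eval_\Seq)^{(n)}$; hence $\ordF(R)\geq\op{ord}_{\mfk{q}}(R)$ for all $R\in\kk[\u]$, and it suffices to prove $\op{ord}_{\mfk{q}}(\Rres_\dd(I))\geq\deg(J)$, where $\deg(J):=\rdeg_\kk(\kk[\x]/J)$.

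Next I would apply \cref{thm:main:chardin} with $\ufd=\kk[\u]$, $M=M[\u]$, $\Seq=\uoU$ and $\mfk{p}=\mfk{q}$, invoking \cref{filter-regular:generic} (so that $\uoU$ is a filter-regular sequence for $M[\u]$) and \cref{res:koszul=remond} (identifying $\Kres_{\kk[\u]}(\uoU,M[\u])$ with $\Rres_\dd(I)$). Here $\vk=\kk(w)$, $\vMod=\kk(w)[\x]/I\kk(w)[\x]$, the reduction of $\uoU$ modulo $\mfk q$ is the generic tuple $\underline W=(W_0,\ld,W_\r)$, and $N=\kk(w)[\x]/(I\kk(w)[\x]+(\underline W))$. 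Granting the hypothesis $\fgr((\underline W),\kk(w)[\x]/I\kk(w)[\x])=\r$, \cref{thm:main:chardin} yields $\op{ord}_{\mfk q}(\Rres_\dd(I))\geq\rdeg_{\kk(w)}(N)$. Finally $\rdeg_{\kk(w)}(N)\geq\deg(J)$: since $I\subseteq J$ and every $W_i\in J_{\di}\subseteq J$, there is a surjection $N\twoheadrightarrow(\kk[\x]/J)\otimes_\kk\kk(w)$; both sides have relevant dimension $\leq 0$ (for $N$ by \cref{filter-regular:hilbert:rdim} and the hypothesis on $\fgr$; for the target because $\dim\mcl Z(J)=0$, by base change and \cref{dim=dim}), so their Hilbert functions are eventually constant, and comparing them multidegree by multidegree gives $\rdeg_{\kk(w)}(N)\geq\rdeg_{\kk(w)}((\kk[\x]/J)\otimes\kk(w))=\deg(J)$. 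In particular $N$ is not eventually zero, which (again by \cref{filter-regular:hilbert:rdim}, using $\rdim_{\kk(w)}(\kk(w)[\x]/I\kk(w)[\x])=\r$) also shows $\fgr((\underline W),\kk(w)[\x]/I\kk(w)[\x])\leq\r$.

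It remains to prove $\fgr((\underline W),\kk(w)[\x]/I\kk(w)[\x])\geq\r$, which is the heart of the argument (for $\r=0$ there is nothing to do, so assume $\r\geq 1$); I would show that $W_0,\ld,W_{\r-1}$ is a filter-regular sequence for $\kk(w)[\x]/I\kk(w)[\x]$. This is proved by a single induction on $i$ with a reinforced hypothesis: that $W_0,\ld,W_{i-1}$ is a filter-regular sequence for $\kk(w)[\x]/I\kk(w)[\x]$ \emph{and} that, for every $l\in\{i,\ld,\r-1\}$ and every relevant $\mfk p_0\in\Ass_{\kk[\x]}(\kk[\x]/J_{\d^{(l)}}\kk[\x])$, the elements $W_0,\ld,W_{i-1}$ form a regular sequence on $(\kk[\x]/I)_{\mfk p_0}\otimes_\kk\kk(w^{(0)},\ld,w^{(i-1)})$ --- which, by the Cohen--Macaulay hypothesis and flat base change, is Cohen--Macaulay of dimension $\r$. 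The inductive step rests throughout on the same mechanism: because the coefficients $w^{(i)}_k$ of $W_i$ are algebraically independent over the subfield $\kk(w^{(0)},\ld,w^{(i-1)})$ carrying all previously used data, an argument in the spirit of \cref{lemma:generic} shows that $W_i$ lies in a prime ideal $\mfk p$ defined over that subfield if and only if $J_{\di}\subseteq\mfk p$. To propagate filter-regularity one checks via \cref{filter-regular:relevant} that $W_i$ avoids every relevant associated prime $\mfk p$ of $P_i:=(\kk(w)[\x]/I\kk(w)[\x])/(W_0,\ld,W_{i-1})$: if $J_{\di}\not\subseteq\mfk p$ this is immediate, while if $J_{\di}\subseteq\mfk p$ then $\mfk p$ is a relevant zero-dimensional prime, minimal over $J_{\di}\kk(w)[\x]$, hence the extension of some $\mfk p_0$ as above with $l=i$; the regular-sequence part of the hypothesis then makes $(P_i)_{\mfk p}$ Cohen--Macaulay of dimension $\r-i\geq 1$, hence of positive depth, contradicting $\mfk p\in\Ass(P_i)$. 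To propagate the regular-sequence part one uses that each minimal prime $\mfk Q$ of the Cohen--Macaulay ring $(\kk[\x]/I)_{\mfk p_0}\otimes\kk(w^{(0)},\ld,w^{(i-1)})/(W_0,\ld,W_{i-1})$ (of dimension $\r-i\geq 1$ when $l\geq i+1$) cuts out a subvariety of dimension $\geq 1$, hence cannot contain $J_{\di}$, so the generic element $W_i$ avoids it and is a nonzerodivisor. This closes the induction and completes the proof.

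The main obstacle is exactly this last paragraph: manufacturing the filter-regular sequence $W_0,\ld,W_{\r-1}$ by combining the multihomogeneous prime-avoidance mechanism --- which disposes of the associated primes not containing the linear system $J_{\di}$ --- with the Cohen--Macaulay hypothesis --- which excludes the zero-dimensional ones that do contain it --- all while keeping the two intertwined assertions (global filter-regularity, and regularity on each relevant Cohen--Macaulay localization) and the successive base changes from $\kk$ to $\kk(w^{(0)},\ld,w^{(i-1)})$ properly synchronized. The remaining ingredients --- the reduction to the generic tuple in $J_\dd$, the application of \cref{thm:main:chardin}, and the inequality $\rdeg_{\kk(w)}(N)\geq\deg(J)$ --- are comparatively routine.
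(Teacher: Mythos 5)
Your proof is correct, but its outer structure is genuinely different from the paper's. The paper fixes actual tuples: it proves (Lemma \ref{claim:CM:filter-regular}) that the tuples $\Seq\in\mcl V=J_{\dz}\times\dots\times J_{\dr}$ whose first $\r$ entries are filter-regular for $\kk[\x]/I$ form a Zariski dense subset $\mcl U$ of $\mcl V$ (choosing each $\seq_i\in J_{\di}$ outside finitely many proper subspaces, which is where $\kk$ infinite is used), applies \cref{thm:main:chardin} only at the maximal ideals $\ker\eval_\Seq$ for $\Seq\in\mcl U$, and then passes to all of $\mcl V$ by observing that $\{\ordF(\Rres_\dd(I))\geq\deg(J)\}$ is the zero locus of the partial derivatives of $\Rres_\dd(I)$ of order less than $\deg(J)$, hence Zariski closed. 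You instead work once and for all at the generic point of $\mcl V$: the prime $\mfk q$, the generic tuple $\underline W$ over $\kk(w)$, and the comparison $\ordF\geq\op{ord}_{\mfk q}$ via $\mfk q^{(n)}=\mfk q^n\subseteq(\ker\eval_\Seq)^n$. This genuinely exploits the prime-ideal (non-maximal) strength of \cref{thm:main:chardin}, dispenses with both the density and the closedness-of-the-derivative-locus arguments, and in fact never uses that $\kk$ is infinite, since genericity is supplied by the transcendental coefficients $w$. The price is the function-field bookkeeping in your inductive construction of the filter-regular sequence $W_0,\ld,W_{\r-1}$, which is structurally parallel to the paper's lemma (same intertwining of prime avoidance for the associated primes not containing $J_{\di}$ with the Cohen--Macaulay hypothesis for the zero-dimensional ones that do). Two points you gloss over are true but do need a word: the relevant associated primes of $P_i$ are extended from $\kk(w^{(0)},\ld,w^{(i-1)})[\x]$ because the extension $\kk(w)$ of that subfield is purely transcendental (so extended primes stay prime and base change creates no new associated primes), which is what legitimizes the \cref{lemma:generic}-style equivalence $W_i\in\mfk p\Leftrightarrow J_{\di}\subseteq\mfk p$; and the claim that a minimal prime of $(\kk[\x]/I)_{\mfk p_0}\otimes\kk(w^{(0)},\ld,w^{(i-1)})/(W_0,\ld,W_{i-1})$ cannot contain $J_{\di}$ is best justified by a height count (such a minimal prime has height $i$ over the Cohen--Macaulay base, while a relevant prime containing the zero-dimensional $J_{\di}$ and surviving the localization at $\mfk p_0$ is forced to be $\mfk p_0$ extended, of height $\r>i$; the local equidimensionality furnished by the Cohen--Macaulay hypothesis is what makes the dimension bookkeeping legitimate, since the non-local base-changed ring itself is not of pure dimension $\r-i$). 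With these details filled in, your argument is a complete and somewhat more economical alternative to the paper's proof.
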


\begin{remark}
Geometrically speaking, we require that $\mcl{Z}(J_\di)$ is supported on a finite set of points, located on components of $\mcl{Z}(I)$ with maximal dimension, and that $\mcl{Z}(I)$ has mild singularities at these points.
\end{remark}

\begin{proof}[Proof of \cref{thm:main:roy:CM}] 
We adapt an idea from \cite[Theorem 5.2]{Roy2013} %
 and consider the affine space $\mbb{A}_\dd$ over $\op{Spec}\kk$ corresponding to the finite dimensional $\kk$-vector space $\kk[\x]_\dz\times\dots\times\kk[\x]_\dr$. 
Then $\mcl{V}=J_{\dz}\times\dots\times J_\dr$ is a $\kk$-vector subspace of $\mathbb{A}_\dd$ and so it is an algebraic subset of it, irreducible and closed in the Zariski topology.
We pospone the proof of the following fact.

\begin{lemma}\label{claim:CM:filter-regular}
Under the hypotheses of \cref{thm:main:roy:CM} there exists a Zariski dense subset $\mcl{U}$ of $\mcl{V}$ such that for every $\Seq=(\seq_0,\ld,\seq_{\r})\in\mcl{U}$ the subsequence $(\seq_0,\ld,\seq_{\r-1})$ is filter-regular for $M=\kk[\x]/I$.
\end{lemma}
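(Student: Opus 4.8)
The plan is to exhibit, for a generic choice of $(\seq_0,\ld,\seq_{\r-1})\in J_{\dz}\times\dots\times J_{\d^{(\r-1)}}$, a filter-regular sequence for $M=\kk[\x]/I$, using \cref{filter-regular:relevant} to reduce the task to an avoidance statement about relevant associated primes. First I would recall that by \cref{filter-regular:relevant} a multihomogeneous element $\seq$ is filter-regular for a finitely generated multigraded module $M'$ precisely when $\seq$ is not contained in any \emph{relevant} associated prime of $M'$; hence $(\seq_0,\ld,\seq_{\r-1})$ is filter-regular for $M$ iff for each $i=0,\ld,\r-1$ the element $\seq_i$ avoids every relevant prime in $\Ass_{\kk[\x]}(M/(\seq_0,\ld,\seq_{i-1})M)$. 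The strategy is to build $\mcl{U}$ as an intersection of finitely many nonempty Zariski-open conditions, one for each step, proceeding by induction on $i$.

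The key step is to show that each such open condition is nonempty. For the $i$-th step I would argue as follows. Fix $(\seq_0,\ld,\seq_{i-1})$ already chosen so that it is a filter-regular sequence for $M$; put $M_i=M/(\seq_0,\ld,\seq_{i-1})M$. By \cref{filter-regular:hilbert:rdim}, $\rdim_\kk(M_i)\le \r-i\ge 1$, and in fact one checks (using the Cohen--Macaulay hypothesis to control the non-vanishing of the top-degree part of the Hilbert polynomial along the relevant primes of $J_{\di}$) that $M_i$ has a relevant associated prime only in dimension $\ge 1$ supported away from the finite set $\mcl{Z}(J_{\di})$ — or, more carefully, the relevant primes $\mfk p\in\Ass_{\kk[\x]}(M_i)$ that could contain all of $J_{\di}$ are exactly those lying over points of $\mcl{Z}(J_{\di})$, and at each such point the local ring $(\kk[\x]/I)_{\mfk p}$ being Cohen--Macaulay of dimension $\r$ forces $(M_i)_{\mfk p}$ to have dimension $\r-i>0$, so $J_{\di}\not\subseteq\mfk p$ since $\mcl{Z}(J_{\di})$ is $0$-dimensional. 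Thus $J_{\di}$ is not contained in any relevant prime of $\Ass_{\kk[\x]}(M_i)$, and by \cref{prime:avoidance} (multihomogeneous prime avoidance) there exists, for $\di$ as prescribed, a multihomogeneous $\seq_i\in J_{\di}$ avoiding all of them. The set of such $\seq_i$ is Zariski-dense (indeed open dense) in $J_{\di}$, because the condition ``$\seq_i\notin\mfk p$'' for a fixed prime $\mfk p$ with $J_{\di}\not\subseteq\mfk p$ cuts out a nonempty open subset of the linear space $J_{\di}$, and there are finitely many such $\mfk p$ by Noetherianity.

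Finally I would assemble the induction: letting $\mcl{U}$ be the set of tuples $(\seq_0,\ld,\seq_\r)$ such that for each $i$ the partial tuple lies in the good locus constructed above, one checks by descending through the steps that $\mcl{U}$ is Zariski-dense in $\mcl{V}$ — at each stage the good locus in the $i$-th factor is dense and depends only Zariski-constructibly on the previous choices, so the total condition is a dense constructible (in fact the intersection is nonempty and contains a dense open) subset. The main obstacle I anticipate is the bookkeeping in the second step: making precise that the \emph{only} relevant associated primes of $M_i=M/(\seq_0,\ld,\seq_{i-1})M$ that can contain $J_{\di}$ are those supported over the finitely many points of $\mcl{Z}(J_{\di})$, and then leveraging the Cohen--Macaulay hypothesis on $(\kk[\x]/I)_{\mfk p}$ to guarantee that at each such point the quotient by the first $i$ elements still has positive (Krull) dimension — this is where the Cohen--Macaulay condition is genuinely used, via the fact that a regular-sequence-like quotient of a Cohen--Macaulay local ring drops dimension by exactly one and stays equidimensional, so that $i<\r$ implies the localization is not zero-dimensional and hence cannot contain the zero-dimensional $J_{\di}$.
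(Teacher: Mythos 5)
Your proposal is correct and follows essentially the same route as the paper: induction on $i$, reduction via \cref{filter-regular:relevant} to avoiding relevant associated primes, the observation that a relevant associated prime of $M_i=M/(\seq_0,\ld,\seq_{i-1})M$ containing $J_\di$ would have to be the prime of a point of $\mcl{Z}(J_\di)$, where the Cohen--Macaulay hypothesis forces positive depth and hence a contradiction, and the infinite-field/prime-avoidance argument (\cref{prime:avoidance}) giving a dense set of good choices in each $J_\di$. The only details to supply are exactly what the paper's induction invariants make explicit: that the earlier $\seq_j$ are genuinely regular on the localizations at those points (associated primes contained in a relevant prime are themselves relevant, so filter-regular choices are locally regular), and the density of $\mcl{U}$, which is obtained by the simple fiberwise closure argument rather than the stronger, unjustified claim that it contains a dense open (or constructible) subset.
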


Given this fact, we apply \cref{thm:main:chardin} to get $\ordF(\Rres_\dd(I))\geq \rdeg_\kk(\kk[\x]/(I,\Seq))$ for every $\Seq\in\mcl{U}$.
From $(I,\Seq)\subseteq J$ and $\deg(J):=\rdeg_\kk(\kk[\x]/J)$ we deduce in particular that $\ordF(\Rres_\dd(I)) \geq \deg(J)$ for every $\Seq\in\mcl{U}$.
To conclude it then suffices to see that the set $\{\Seq\in\mbb{A}_\dd:\, \ordF(\Rres_\dd(I)) \geq \deg(J)\}$ is Zariski closed.
This is true because this is the common zero locus of a collection of polynomial functions $\{\mcl{D}\,\Rres_\dd(I)\}_{\mcl{D}}\subseteq\kk[\u]$, where $\mcl{D}$ ranges through the differential operators on $\kk[\u]$ which are partial derivatives of order at most $\deg(J)$.
\end{proof}

\begin{proof}[Proof of \cref{claim:CM:filter-regular}]
Let $\cA=\bigcup_{i=0}^{\r-1} \Ass_{\kk[\x]}(\kk[\x]\slash J_{\di}\kk[\x])$. 
We will prove that for every $i=0,\ld,{\r}$ there exists a Zariski dense subset $\mcl{U}_i\subseteq\mcl{V}$ with the following properties:
\begin{enumerate}[(i)]
\item
for every $\Seq=(\seq_0,\ld,\seq_{\r})\in\mcl{U}_i$ the subsequence $\Seq_{(i)}:=(\seq_0,\ld,\seq_{i-1})$ is filter-regular for $M=\kk[\x]/I$;
\item
for every relevant prime $\mfk{p}\in\cA$ the module $M_{\Seq,i}:=M/(\Seq_{(i)})M$ is locally Cohen-Macaulay at $\mfk{p}$;
\item
for every relevant prime $\mfk{p}\in\cA$ and every $\mfk{q}\in\Ass_{\kk[\x]} (M_{\Seq,i})$ with $\mfk{q}\subseteq \mfk{p}$ we have $\dim\mcl{Z}(\mfk{q})=\r-i$.
\end{enumerate}
The degenerate case $i=0$ is provided by the hypothesis and $\mcl{U}_0=\mcl{V}$.
Let $\mcl{U}_{i}$ satisfy the requirements for some  $i\leq \r-1$, let $\Seq\in\mcl{U}_{i}$ and consider the following finite collection of  $\kk$-subspaces of $J_\di$:
\[
	\mathcal{S}_{\Seq} = \{ \mfk{q}\cap J_\di:\, \mfk q\in \Ass_{\kk[\x]}(M_{\Seq,i}),\,\dim\mcl{Z}(\mfk q)\geq 0\}.
\] 
Since $\dim\mcl{Z}(J_\di)=0$ we see that if $\mfk{q}$ is any multihomogeneous prime of $\kk[\x]$ containing $J_\di$, then either $\mfk{q}$ is irrelevant ($\dim\mcl{Z}(\mfk{q})=-1$) or $\dim\mcl{Z}(\mfk{q})=0$ and $\mfk{q}\in\cA$ because in particular $\mfk{q}$ is minimal over $J_\di$. 
In either case, also by condition (iii) above, no such $\mfk q$ appears in the definition of $\mcl{S}_{\Seq}$.
Therefore $\mcl{S}_{\Seq}$ is a finite collection of proper $\kk$-subspaces of $J_\di$.
Since $\kk$ is an infinite field, their union $S_\Seq:=\cup\mcl S_\Seq$ is a proper Zariski-closed subset of $J_\di$.
We now define
\[
\mcl{U}_{i+1} := \bigcup_{\Seq\in\mcl{U}_i} \{(\seq_0,\ld,\seq_{i-1})\}\times (J_\di-{S}_{\Seq})\times \{(\seq_{i+1},\ld,\seq_\r)\}. 
\]
For $\Seq\in\mcl U_i$ the closure of $J_\di-{S}_{\Seq}$ is all of $J_{\di}$, so in particular it contains $\seq_i$.
Then $\mcl U_{i+1}$ is dense in $\mcl{V}$, because its closure contains $\mcl{U}_i$.
For every $\Seq=(\seq_0,\ld,\seq_{\r})\in\mcl{U}_{i+1}$ the element $\seq_i$ is filter-regular for $M_{\Seq,i}$ by \cref{filter-regular:relevant} and for every relevant prime $\mfk{p}\in\cA$ it is a regular element for the localization $(M_{\Seq,i})_\p$, which is Cohen-Macaulay.
Therefore $(M_{\Seq,i+1})_\p$ is Cohen-Macaulay as well. Moreover, by unmixedness, all associated primes $\mfk{q}'$ of $M_{\Seq,i+1}$ containing $\mfk{p}$ are minimal ones. Since they are in particular minimal primes for the ideals $(\mfk{q},\seq_i)$, where $\mfk{q}$ is an associated prime of $M_{\Seq,i}$ containing $\mfk{p}$, every such $\mfk{q}'$ satisfies $\dim\mcl{Z}(\mfk{q}')=\r-i-1$. 
We can then continue by induction and we conclude what we wanted when $i=\r$.
\end{proof}

\begin{remark}\label{filter:regular:why}
In \cref{thm:main:chardin} we used the notion of $\fgr$, defined in terms of filter-regular sequences, instead of the more common notion of $\text{depth}$, involving regular sequences.
Indeed, the former is more natural (many of our statements are true `for $\d$ large enough') and more general (a regular sequence is also filter-regular).
Moreover, it was essential in order to prove \cref{claim:CM:filter-regular} (and so \cref{thm:main:roy:CM}), imposing only mild conditions on the multiprojective subvariety $\mcl{Z}(I)$. 
Namely, we assumed it to be locally Cohen-Macaulay (e.g. smooth is enough) at a finite number of points. 

In fact, to have the analogous statement with regular sequences, one needs $\mcl{Z}(I)$ to be arithmetically Cohen Macaulay (ACM), which means that the whole coordinate ring $\kk[\x]/I$ is Cohen-Macaulay (thus also at the irrelevant primes).
This is a strong global condition, but it is satisfied, for example, in the case $\mcl{Z}(I)=\PP^\n_\kk$ studied in \cite[Theorem 5.2]{Roy2013}.

We give an example, taken from \cite{thesisTuyl}, of a family of non-ACM varieties.
 Let $\q=1$, $n_1=2m-1$ and $I=(x_{2k}:\, 0\leq k\less m) \cap (x_{2k+1}:\, 0\leq k\less m)$.
Then $\mcl{Z}(I)$ corresponds to an $(m-1)$-dimensional projective variety but the $\kk[\x]$-module $\kk[\x]/I$ has only $\text{depth}=1$. Indeed, it's not possible to extend the regular sequence $\{x_0+x_1\}$, since after factoring it out, $x_0$ annihilates all the monomials.
For an example of a non-CM integral domain see \cite{mathoverflowCM}.%
\end{remark}

%
%
%
%

\section{Polynomials vanishing at prescribed directions}
\label{sec:application}


\subsection{Preliminaries on commutative algebraic groups}
\label{sec:application:group}

Let $G_1,\ldots,G_\q$ be connected commutative algebraic groups defined over $\C$. We recall that they are smooth quasi-projective varieties by the structure theorem of Chevalley and Barsotti
and their set of complex points $G_1(\C),\dots, G_\q(\C)$ have a structure of complex Lie groups.
Let $\wb{G}_1,\ldots,\wb{G}_\q$ be suitable projective compactifications of them, embedded in projective spaces by $\theta_i: \wb{G}_i\hookrightarrow \PP^{n_i}_{\C}$ for $i=1,\ldots,\q$. We then put $G=G_1\times\ldots\times G_\q$, $\wb{G} =\wb{G}_1\times\dots\times\wb{G}_\q$, $\PP^\n_{\C}=\PP^{n_1}_{\C}\times\dots\times \PP^{n_\q}_{\C}$ and $\theta=\theta_1\times\dots\times\theta_\q:\wb{G}\hookrightarrow\PP^\n_{\C}$. Thus, we consider $G$ as a Zariski open subscheme of a multiprojective reduced closed subscheme $\wb{G}$ of the multiprojective space $\PP^\n_{\C}$. For $i=1,\ldots, \q$ we consider in $\PP^{n_i}_{\C}$ a set of projective coordinates $\x_i=(x_{i,0},\ldots,x_{i,n_i})$ and the affine coordinate chart
$U_i$ defined by $\{x_{i,0}\neq 0\}$. We consider in $\PP^\n_{\C}$ the set of multiprojective coordinates $\x=(\x_1,\dots,\x_\q)$, the affine chart $U=U_1\times\dots\times U_\q$, and the multigraded coordinate ring $\C[\x]$. We denote by $\mfk{G}\subseteq \C[\x]$ the multihomogeneous ideal of definition of $\wb{G}$, which is a prime ideal because $\wb{G}$ is irreducible, being the closure of a connected algebraic group. We also let  $\pi_i:\PP^\n_{\C}\to\PP^{n_i}_{\C}$ and use the same symbol to indicate the projections $\wb{G}\to\wb{G}_i$ and $G\to G_i$. 

Let $T_eG(\C)=T_{e_1}G_1(\C)\times\dots\times T_{e_\q}G_\q(\C)$ be the tangent space at the identity, identified with the Lie algebra $\mfk{g}=\mfk{g}_1\times\dots\times\mfk{g}_\q$ of invariant derivations on $G(\C)$. This Lie algebra is commutative since the Lie group $G(\C)$ is commutative. Let $\Delta=\{\de_1,\ldots,\de_\nW\}\subseteq \mfk{g}$ be a set of linearly independent invariant derivations 
and let $\Sigma=\{\gamma_1,\ldots,\gamma_\np\}\subseteq G(\C)$ be a finite set of complex points of $G$. 
We assume that $\Sigma\subset U(\C)$ (see \cite[p.~492]{Masser1981} for how to reduce the general case to this one). 
For every $\sigma\in\N^\nW$ we define the differential operator $\de^\sigma=\de_1^{\sigma_1}\dots\de_\nW^{\sigma_\nW}$ of order $\abs{\sigma}=\sigma_1+\dots+\sigma_\nW$ and for every $\um\in\Z^\ell$ we define the point $\um\gamma=m_1\gamma_1+\dots+m_\np\gamma_\np$.


Since we assumed that $\Sigma$ is contained in the affine chart $U=\{x_{1,0}\neq 0\}\cap\ldots\cap \{x_{\q,0}\neq 0\}$ we can give the following definition, as is done in \cite{Fischler2005} for the homogeneous case.

\begin{definition}\label{def:ev}
Given $\Sigma, \Delta$ as above and a positive integer $T$, we define for every multidegree $\d$ the evaluation operator
\[
\begin{array}{rccc}
\op{ev}_{\Sigma,T,\d}: & \C[\mbf{x}]_\d& \too &\C^{|\Sigma|\TW}\\
   &    P   &\mapsto    &\left(\de^\sigma\left(\frac{P}{x_{1,0}^{d_0}\dots x_{\q,0}^{d_\q}}\right)(\gamma):\ \abs{\sigma}\less T,\gamma\in \Sigma \right)
\end{array}
\]
\end{definition}

\begin{remark}
One can slightly generalize the datum of $\Delta,\Sigma,T$ introducing the concept of a \emph{ponderated set}, as in \cite{Philippon1996} or \cite{Galateau2014}. Moreover one can enlarge this setting to quasi-projective varieties with an action of $G$ \cite{Nakamaye1995} or even to non-commutative algebraic groups \cite{Huicochea2015}, under suitable hypothesis on the projective embedding.
\end{remark}


\subsection{The interpolation ideal} 
\label{sec:application:ideal}

Throughout this paragraph we keep the setting and the notations for $G,\theta,\Sigma,\Delta,T$ introduced in \cref{sec:application:group}. We define in this multiprojective setting the main ideal $\IST$ of the theory of interpolation on commutative algebraic groups, which is the ideal generated by the multihomogeneous polynomials vanishing in $\Sigma$ with order $T$ in the directions prescribed by $\Delta$. We then describe the multiprojective subscheme it defines and its relation with the surjectivity of the map $\op{ev}_{\Sigma,T,\d}$ introduced in \cref{def:ev}.

\begin{definition}\label{def:IST}
For every multidegree $\d\in\N^\q$ we let
\[
\IST_\d:=\op{ker}(\op{ev}_{\Sigma,T,\d})
\]
and then we define  $\IST:=\bigoplus_{\d\in\N^\q}\IST_\d$.
\end{definition}

We observe that $\IST$ is a multihomogeneous ideal of $\C[\x]$ which contains $\mfk{G}$.
The following result is a `trivial' form of an interpolation lemma. In general the objective of an interpolation lemma is to achieve better estimates for the multidegree $\d_{ev}$. Here we essentially reproduce Lemma 4.2 of \cite{Fischler2005} in multihomogeneous setting.

\begin{proposition}\label{lemma:interpolation:trivial}
Let $\d_{ev}\in\N^\q$ have all its coordinates equal 
to $T\abs{\Sigma}$. 
Then for every $\d\geq\d_{ev}$ the map $\ev_{\Sigma,T,\d}$ is surjective.
\end{proposition}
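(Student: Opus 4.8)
The plan is to show that the evaluation map $\ev_{\Sigma,T,\d}$ is surjective by exhibiting, for each fixed pair $(\sigma_0,\gamma_0)$ with $\abs{\sigma_0}<T$ and $\gamma_0\in\Sigma$, a multihomogeneous polynomial $P\in\C[\x]_\d$ whose image under $\ev_{\Sigma,T,\d}$ is the corresponding standard basis vector of $\C^{\abs{\Sigma}\TW}$. Since these basis vectors span the target, their simultaneous attainability gives surjectivity. The key point is that the coordinates of $\ev_{\Sigma,T,\d}(P)$ are the values $\de^\sigma\bigl(P/x_{1,0}^{d_1}\cdots x_{\q,0}^{d_\q}\bigr)(\gamma)$, and dividing by the monomial $x_{1,0}^{d_1}\cdots x_{\q,0}^{d_\q}$ turns a multihomogeneous polynomial of multidegree $\d$ into a regular function on the affine chart $U$ containing $\Sigma$; conversely, multiplying a polynomial function on $U$ of low enough ``degree'' by this monomial produces an element of $\C[\x]_\d$.

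First I would reduce to the affine picture: writing $y_{p,j}=x_{p,j}/x_{p,0}$ for the affine coordinates on $U$, a basis of $\C[\x]_\d$ is given by $x_{1,0}^{d_1}\cdots x_{\q,0}^{d_\q}\cdot \mathbf{y}^{\mathbf{a}}$ where $\mathbf{a}$ ranges over multi-exponents with the $p$-th block of total degree at most $d_p$. Thus $\ev_{\Sigma,T,\d}$ is surjective as soon as the map sending a polynomial $Q$ in the $y$'s, of partial degree $\le d_p$ in the $p$-th block of variables, to the jet data $\bigl(\de^\sigma Q(\gamma)\bigr)_{\abs{\sigma}<T,\ \gamma\in\Sigma}$ is surjective. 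Since the $\de_i$ are invariant derivations and $\Sigma\subset U(\C)$, each $\de_i$ acts on functions on $U$ as a first-order differential operator with polynomial (indeed regular on $U$) coefficients; hence $\de^\sigma Q$ for $\abs{\sigma}<T$ only involves derivatives of $Q$ of order $<T$, and the composite jet map factors through the $(T-1)$-jet of $Q$ at the finitely many points $\gamma_1,\dots,\gamma_{\abs{\Sigma}}$.

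Next I would invoke the classical fact that polynomial functions of sufficiently high degree interpolate arbitrary jets at a finite set of points: for any finite $\Sigma\subset\C^n$ (here $n=\abs{\n}$) and any order $T$, the Taylor-jet evaluation $\C[\mathbf{y}]\to\bigoplus_{\gamma\in\Sigma}\C[\mathbf{y}]/\mathfrak{m}_\gamma^T$ is surjective once the degree is large enough — one builds, by Lagrange-type interpolation using that $\C$ is infinite, a polynomial with prescribed Taylor coefficients up to order $T-1$ at one point and vanishing to order $T$ at all the others, then sums. Because $\de^\sigma$ for $\abs{\sigma}<T$ depends only on this $(T-1)$-jet and the change of basis between the $\de$-jet and the Taylor-jet at each $\gamma$ is triangular and invertible (the $\de_i$ are $\C$-linearly independent invariant derivations, so their symbols at $e$, transported to $\gamma$, are linearly independent, and higher $\de^\sigma$ differ from $\partial^\sigma$ by lower-order terms), surjectivity of the Taylor-jet map is equivalent to surjectivity of the $\de$-jet map. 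Finally I would check that taking all coordinates of $\d$ equal to $T\abs{\Sigma}$ is enough ``large degree'': a product over the points $\gamma\neq\gamma_0$ of linear forms vanishing at $\gamma$, raised to the power $T$, has degree $T(\abs{\Sigma}-1)<T\abs{\Sigma}$ in each block, and multiplying by a block-by-block polynomial of degree $<T$ adjusting the jet at $\gamma_0$ keeps each block degree $<T\abs{\Sigma}\le d_p$; homogenizing back by $x_{1,0}^{d_1}\cdots x_{\q,0}^{d_\q}$ lands in $\C[\x]_\d$, as required.

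The main obstacle I anticipate is the bookkeeping in the last step — verifying that the separating construction genuinely fits inside multidegree $\d_{ev}$ with all coordinates $T\abs{\Sigma}$, rather than some larger bound — and, more conceptually, making precise the claim that $\de^\sigma$ with $\abs{\sigma}<T$ sees only the $(T-1)$-jet and that the transition matrix to ordinary partial derivatives is unipotent (upper-triangular with respect to the order filtration). Both are routine once one writes the invariant derivations in local coordinates on $U$, but they are exactly where the hypothesis $\Sigma\subset U(\C)$ and the linear independence of $\Delta$ enter, so I would spell them out carefully; everything else is a standard polynomial interpolation argument over the infinite field $\C$.
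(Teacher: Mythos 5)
Your proposal is sound and reaches the same bound by the same basic mechanism as the paper --- separating the points of $\Sigma$ by linear forms raised to the power $T$, using the linear independence of $\de_1,\ld,\de_\nW$ to control first-order data, and a triangularity-in-the-order-filtration argument for the higher $\de^\sigma$ --- but it is organized differently. The paper never passes through full ambient Taylor jets: it constructs multihomogeneous polynomials $M_{\gamma,i}$ of multidegree $\uno$ with $M_{\gamma,i}(\gamma)=0$ and $\de_j(M_{\gamma,i}/z(\uno))(\gamma)=\delta_{i,j}$ (this is exactly your rank-$\nW$ symbol statement), and then the explicit products $P_{\gamma,\sigma}=z(\d-(\abs{\sigma}+(\abs{\Sigma}-1)T)\uno)\prod_i M_{\gamma,i}^{\sigma_i}\prod_{\delta\neq\gamma}L_{\gamma,\delta}^{T}$ give a family whose evaluation vectors are triangular against the $\de$-jet functionals, so surjectivity is read off directly and the construction stays multihomogeneous throughout. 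Your route factors $\ev_{\Sigma,T,\d}$ as classical Hermite interpolation in the affine chart followed by a map from ambient $(T-1)$-jets to $\de$-jets, which buys a reduction to a textbook interpolation statement, at the cost of two points you should state more carefully: the derivations $\de_i$ act on functions restricted to $G$, not on all of $U$, so the second map should be treated as restriction of jets from $U$ to $G$ at $\gamma$ (surjective because $G$ is locally closed in $U$) followed by the $\de$-jet functionals on the local ring of $G$ at $\gamma$; and since in general $\nW<\abs{\n}$ the comparison between Taylor jets and $\de$-jets is not an invertible ``change of basis'' but only a surjection --- which is all you need, and which your parenthetical reasons (pointwise linear independence of the commuting $\de_i$, plus the order filtration, whose graded pieces are products of independent linear forms) do justify. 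Your degree bookkeeping $T(\abs{\Sigma}-1)+(T-1)=T\abs{\Sigma}-1\leq d_p$ matches the paper's choice of the exponent in $z(\d-(\abs{\sigma}+(\abs{\Sigma}-1)T)\uno)$, so both arguments use the hypothesis $\d\geq T\abs{\Sigma}\,\uno$ in the same way.
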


\begin{proof}
Let $\uno:=(1,\ldots,1)\in\N^\q$ as in \cref{sec:multi:def}, and $\d\geq\d_{ev}= T\abs{\Sigma} \uno$. 
For every $\nu\in\N^\q$ let $z(\nu)\in\kk[\x]_\nu$ be given by the formula
\[
		z(\nu) := \prod_{p=1}^\q  x_{p,0}^{\nu_p} .
\]
Let $\gamma,\delta\in\Sigma$ be distinct and let $i\in\{1,\ld,\nW\}$.
We now exhibit the existence of polynomials $L_{\gamma,\delta}, M_{\gamma,i}\in\kk[\x]_\uno$ such that: 
\begin{enumerate}[(i)]
\item
$L_{\gamma,\delta}$ vanishes at $\delta$ and not at $\gamma$; 
\item
$M_{\gamma,i}$ vanishes at $\gamma$ and $\de_j(M_{\gamma,i}/z(\uno))(\gamma)=\delta_{i,j}$ for all $j=\{1,\ld,\nW\}$, 
\end{enumerate}
where $\delta_{i,j}$ is Kronecker's symbol.
Then, given $\gamma\in\Sigma$ and  $\sigma\in\N^\nW$ with $\abs{\sigma}\less T$, we construct a polynomial $P_{\gamma,\sigma}\in\kk[\x]_\d$ such that:
\begin{enumerate}[(i)]
\item
$\de^\sigma (P_{\gamma,\sigma}/z(\d))(\gamma)\neq 0$;
\item
$\de^\tau(P_{\gamma,\sigma}/z(\d))(\gamma)= 0$ for every $\tau\leq\sigma$ with $\tau\neq \sigma$;
\item
$\de^\tau(P_{\gamma,\sigma}/z(\d))(\delta)= 0$ for every $\delta\in\Sigma-\{\gamma\}$  and every $\tau\in\N^\nW$ with $\abs{\tau}\less T$.
\end{enumerate}
It is clear that these polynomials will witness the surjectivity of $\ev_{\Sigma,T,\d}$.

Since $\gamma\neq \delta$ there are $i,j,p$ with $1\leq p\leq \q$ and $0\leq i\less j\leq n_p$ such that the linear form $\delta_{p,i}x_{p,j}-\delta_{p,j}x_{p,i}$ vanishes at $\delta$ and not at $\gamma$. We thus define 
\[
L_{\gamma,\delta} := (\delta_{p,i}x_{p,j}-\delta_{p,j}x_{p,i})\prod_{p'\neq p} x_{p',0} . 
\]
Since the derivations $\de_1,\ld,\de_\nW$ are linearly independent, the following matrix, with $\nW$ rows and $\abs{\n}=n_1+\ld+n_\q$ columns,
\[
\left[\de_j\left(\frac{x_{p,k}}{x_{p,0}}\right)(\gamma)\right]_%
{\substack{j:\, 1\leq j\leq \nW\\ (p,k):1\leq p\leq \q,1\leq k\leq n_p}}
= 
\left[\de_j\left(\frac{x_{p,k}\prod_{p'\neq p} x_{p',0}}{z(\uno)}\right)(\gamma)\right]_%
{\substack{j:\, 1\leq j\leq \nW\\ (p,k):1\leq p\leq \q,1\leq k\leq n_p}}
\]
has rank $\nW$. Therefore for every $i\in\{1,\ld,\nW\}$ there is  $\wt{M}_{\gamma,i}\in \kk[\x]_\uno$ such that $\de_j(\wt{M}_{\gamma,i}/z(\uno))(\gamma)=\delta_{i,j}$ for all $j\in\{1,\ldots,\nW\}$.
Then we define $M_{\gamma,i}$ by adding to $\wt{M}_{\gamma,i}$ a suitable multiple of $z(\uno)$ so that $M_{\gamma,i}(\gamma)=0$.
Finally, we define, for $\gamma\in\Sigma$ and $\sigma\in\N^\nd$ with $\abs{\sigma}\less T$:
\[
	P_{\gamma,\sigma} 
	= 
			z(\d-(\abs{\sigma}+(\abs{\Sigma}-1) T)\uno)
	\prod_{i=1}^\nW 
			M_{\gamma,i}^{\sigma_i}
	\prod_{\delta\in\Sigma\bsl\{\gamma\}} 
			L_{\gamma,\delta}^{ T}.
\]
\end{proof}

The following proposition employs a qualitative modification of a long division algorithm from \cite{Roy2013}. 
\begin{proposition}\label{IST:generated}
Let $\d\in\N^\q$ such that $\ev_{\Sigma,T,\d}$ is surjective and let $\d'\in\N^\q$ such that 
 $\d'\geq\d+\uno$. Then $(\IST_{\d'})=\IST\cap\C[\x]_{\geq \d'}$.
\end{proposition}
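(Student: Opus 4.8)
The plan is to set $J':=(\IST_{\d'})$; one inclusion is immediate and the reverse will follow from an induction on total degree, so I first dispose of the easy half. Since $\IST_{\d'}\subseteq\IST$ and $\IST$ is an ideal we have $J'\subseteq\IST$, and since $J'$ is generated by elements of multidegree $\d'$ every multihomogeneous element of $J'$ has multidegree $\geq\d'$, whence $J'\subseteq\IST\cap\C[\x]_{\geq\d'}$. As $\IST$ and $J'$ are multihomogeneous, for the reverse inclusion it suffices to show $\IST_{\d''}=J'_{\d''}$ for every $\d''\geq\d'$, equivalently that the canonical surjection $M'_{\d''}\to M_{\d''}$ is bijective, where $M:=\C[\x]/\IST$ and $M':=\C[\x]/J'$.

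The engine of the argument is the identity
\[
  \ev_{\Sigma,T,\nu+\e_q}(x_{q,0}P)=\ev_{\Sigma,T,\nu}(P)
  \qquad(P\in\C[\x]_\nu,\ 1\le q\le\q),
\]
which is the precise form of the ``long division by $x_{q,0}$'' used in \cite{Roy2013}: it holds because, writing $z(\mu):=\prod_{p=1}^{\q}x_{p,0}^{\mu_p}$, division of $x_{q,0}P$ by $z(\nu+\e_q)=x_{q,0}\,z(\nu)$ returns $P/z(\nu)$, and $x_{q,0}$ is nonvanishing on $\Sigma\subset U$. From this I would extract two facts. First, surjectivity of $\ev_{\Sigma,T,\nu}$ propagates upward one step at a time, so starting from the hypothesis on $\d$ the map $\ev_{\Sigma,T,\nu}$ is surjective for every $\nu\geq\d$. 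Second, since $\IST_\nu=\ker(\ev_{\Sigma,T,\nu})$ by \cref{def:IST}, for $\nu\geq\d$ the evaluation identifies $M_\nu$ with the common target $\C^{\abs{\Sigma}\TW}$, and the displayed identity says that multiplication by $x_{q,0}$ carries this identification at $\nu$ to the one at $\nu+\e_q$; hence multiplication by $x_{q,0}$ induces an isomorphism $M_\nu\xrightarrow{\sim}M_{\nu+\e_q}$ for every $\nu\geq\d$ and every $q$.

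With these facts in hand I would prove ``$\IST_{\d''}=J'_{\d''}$'' by induction on $\abs{\d''}:=\sum_p\d''_p$, over all $\d''\geq\d'$. The base case $\d''=\d'$ is the definition of $J'$. For the inductive step, fix $\d''\geq\d'$ with $\d''\ne\d'$, pick $q$ with $\d''_q>\d'_q$, and set $\nu:=\d''-\e_q$, so that $\nu\geq\d'$ and $\abs{\nu}<\abs{\d''}$; by induction $\IST_\nu=J'_\nu$. I first claim that multiplication by $x_{q,0}$ is surjective $M'_\nu\to M'_{\d''}$, i.e. $\C[\x]_{\d''}=x_{q,0}\C[\x]_\nu+J'_{\d''}$: a monomial of multidegree $\d''$ not divisible by $x_{q,0}$ has the form $x_{q,i}\mfk m$ with $1\le i\le n_q$ and $\mfk m$ a monomial of multidegree $\nu$, and since $\nu-\e_q\geq\d$ (this is where $\d'\geq\d+\uno$ is used) the second fact gives $\mfk m\in x_{q,0}\C[\x]_{\nu-\e_q}+\IST_\nu=x_{q,0}\C[\x]_{\nu-\e_q}+J'_\nu$, say $\mfk m=x_{q,0}g+h$ with $g\in\C[\x]_{\nu-\e_q}$ and $h\in J'_\nu$, so that $x_{q,i}\mfk m=x_{q,0}(x_{q,i}g)+x_{q,i}h\in x_{q,0}\C[\x]_\nu+J'_{\d''}$ because $J'$ is an ideal. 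Now consider the commuting square built from multiplication by $x_{q,0}$ on $M'$ and on $M$ together with the surjections onto $M$: its left edge $M'_\nu\to M_\nu$ is bijective (induction, as $\IST_\nu=J'_\nu$), its bottom edge $M_\nu\to M_{\d''}$ is bijective (second fact, $\nu\geq\d$), and its top edge $M'_\nu\to M'_{\d''}$ is surjective (just shown); chasing the square forces the right edge $M'_{\d''}\to M_{\d''}$ to be injective as well, hence bijective, i.e. $\IST_{\d''}=J'_{\d''}$, completing the induction.

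The point that needs care — and the reason the proof has this shape rather than two separate steps — is a circularity: the surjectivity of multiplication by $x_{q,0}$ on $M'$ in multidegree $\d''$ is deduced from the equality $\IST=J'$ in the lower multidegree $\nu=\d''-\e_q$, which is itself an instance of the statement being proved, so surjectivity and equality must be run together in one induction. The only other subtlety is the bookkeeping of multidegrees: the reduction of $x_{q,i}\mfk m$ appeals to the second fact at multidegree $\nu-\e_q=\d''-2\e_q$, so one needs $\d''-2\e_q\geq\d$; this holds precisely because $\d''\geq\d'\geq\d+\uno$ forces $\d''_q\geq\d_q+2$ while $\d''_r\geq\d_r$ for $r\ne q$, which is exactly why the hypothesis is phrased as $\d'\geq\d+\uno$.
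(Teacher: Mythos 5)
Your proof is correct: the preliminary inclusion, the two facts about the evaluation maps, and the joint induction with the commutative square all check out, and the multidegree bookkeeping (needing $\d''-2\e_q\geq\d$, which follows from $\d'\geq\d+\uno$) is exactly where the hypothesis enters. The engine is the same as the paper's, namely the identity $\ev_{\Sigma,T,\nu+\e_q}(x_{q,0}P)=\ev_{\Sigma,T,\nu}(P)$ (Roy's long division by $x_{q,0}$) combined with surjectivity of the evaluation map, but the organization is genuinely different. The paper reduces to the one-step statement $\IST_{\d'+\e_p}=\C[\x]_{\e_p}\IST_{\d'}$, valid for any $\d'\geq\d+\uno$, and proves it by explicitly rewriting an arbitrary $Q\in\IST_{\d'+\e_p}$: surjectivity is used to replace each coefficient $P_i$ ($i\geq 1$) by a polynomial with the same evaluation that is a multiple of $x_{p,0}$, so that $Q$ becomes a combination of elements of $\IST_{\d'}$ plus $x_{p,0}$ times a polynomial of multidegree $\d'$ whose evaluation is then forced to vanish; iterating over the steps $\e_p$ gives the general case with no circularity, since each step needs only surjectivity in lower degree, never the conclusion in lower degree. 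Your version replaces this element computation by a structural statement — multiplication by $x_{q,0}$ is bijective on the graded pieces of $\C[\x]/\IST$ in degrees $\geq\d$ (both sides being identified with $\C^{\abs{\Sigma}\TW}$ by the evaluation) and surjective on the graded pieces of $\C[\x]/(\IST_{\d'})$ — followed by a diagram chase; the price is precisely the circularity you flag, since surjectivity on the second quotient uses the equality one degree lower, forcing the two statements into a single induction. What your route buys is the explicit intermediate fact that the Hilbert function of $\C[\x]/\IST$ is constant and realized by multiplication by $x_{q,0}$ in degrees $\geq\d$, which sits nicely alongside \cref{IST:degree}; what the paper's route buys is brevity and a direct decomposition in terms of the generators $\IST_{\d'}$.
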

\begin{proof}
Let $\d''\geq \d'$. We need to show that  $\IST_{\d''}=\C[\x]_{\d''-\d'}\IST_{\d'}$.
We denote by $(\mbf{e}_p)_{1\leq p\leq \q}$ the canonical basis of $\N^\q$ as in \cref{sec:multi:def}. We will prove the assertion assuming $\d''=\d'+\mbf{e}_p$ for some $p$. The general case then follows by induction because $\C[\x]_\mbf{a}\C[\x]_\mbf{b}=\C[\x]_{\mbf{a}+\mbf{b}}$ for every $\mbf{a},\mbf{b}\in\N^\q$. Let $Q$ be any element of $\IST_{\d''}$. 
We can write $Q=\sum_{i=0}^{n_p} P_i x_{p,i}$ for some $P_i\in \C[\x]_{\d'}$. Since $\ev_{\Sigma,T,\d}$ is surjective, for every $i=1,\ldots,n_p$ we can find $R_i\in\C[\x]_\d$ such that $\ev_{\Sigma,T,\d}(R_i)=\ev_{\Sigma,T,\d'}(P_i)$. Then we write
\begin{align*}
Q
    &=
		    \sum_{i=0}^{n_p} 
				      P_i x_{p,i} 
				- 
				\sum_{i=1}^{n_p} 
				      R_i x_{p,0}x_{p,i}
				+
				\sum_{i=0}^{n_p} 
				      R_i x_{p,0}x_{p,i}
				\\
   &=
	      \sum_{i=1}^{n_p} 
				      x_{p,i} (P_i-x_{p,0}R_i) 
				+ 
				x_{p,0}( P_0+     \sum_{i=1}^{n_p}        R_i x_{p,i}  )
				.
\end{align*}
We notice that $P_i-x_{p,0}R_i \in \IST_{\d'}$ by construction and $Q\in \IST$. Therefore also $P_0+\sum_{i=1}^{n_p} R_i x_{p,i}$ is in $\IST_{\d'}$ and this concludes the proof.
\end{proof}

\begin{proposition}\label{IST:degree}
The subscheme $\mcl{Z}(\IST)$ is zero-dimensional and $\deg(\IST)=\abs{\Sigma}\TW$.%
\end{proposition}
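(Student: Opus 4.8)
The plan is to compute the two invariants of the subscheme $\mcl{Z}(\IST)$ — its dimension and its degree — directly from the algebraic description of $\IST$ furnished by the previous results, rather than geometrically. The key input is \cref{IST:generated} together with \cref{lemma:interpolation:trivial}: the latter tells us that $\ev_{\Sigma,T,\d}$ is surjective for all $\d\geq\d_{ev}=T\abs{\Sigma}\uno$, and the former then tells us that for every $\d'\geq\d_{ev}+\uno$ we have $(\IST_{\d'})=\IST\cap\C[\x]_{\geq\d'}$; in particular the multihomogeneous components $(\C[\x]/\IST)_\d$ for $\d$ large enough are determined by the exact sequence
\[
	0\to\IST_\d\to\C[\x]_\d\xrightarrow{\ev_{\Sigma,T,\d}}\C^{\abs{\Sigma}\TW}\to 0,
\]
the surjectivity on the right holding for $\d$ large. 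This immediately gives $\dim_\C(\C[\x]/\IST)_\d=\abs{\Sigma}\TW$ for all sufficiently large $\d$, so the Hilbert polynomial $H_{\C[\x]/\IST}$ (in the sense of \cref{hilbert:polynomial}, over the field $\C$) is the constant $\abs{\Sigma}\TW$. Hence $\rdim_\C(\C[\x]/\IST)=0$ (the polynomial has total degree $0$ and is a nonzero constant, since $\abs{\Sigma}\geq 1$), which by the definition of relevant dimension and \cref{dim=dim} means $\dim\mcl{Z}(\IST)=0$; moreover by definition $\deg(\IST)=\rdeg_\C(\C[\x]/\IST)$ is precisely this constant value, namely $\abs{\Sigma}\TW$.

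More carefully, I would first record that $\IST\supseteq\mfk{G}$ and that $\IST$ is a genuine multihomogeneous ideal, so $\mcl{Z}(\IST)$ is a well-defined multiprojective subscheme contained in $\wb{G}$. Then I would invoke \cref{lemma:interpolation:trivial} to fix a multidegree $\d_{ev}=T\abs{\Sigma}\uno$ past which $\ev_{\Sigma,T,\d}$ is onto, so that the displayed short exact sequence holds and yields $h_{\C[\x]/\IST,\dim_\C}(\d)=\abs{\Sigma}\TW$ for all $\d\geq\d_{ev}$. By \cref{hilbert:polynomial} the Hilbert function agrees with a polynomial $H_{\C[\x]/\IST}\in\Q[t_1,\dots,t_\q]$ for $\d$ large; since that polynomial takes the constant value $\abs{\Sigma}\TW$ on the cofinal set $\{\d:\d\geq\d_{ev}\}$, it is identically the constant $\abs{\Sigma}\TW$. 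Then $\rdim_\C(\C[\x]/\IST)=0$ because $\abs{\Sigma}\geq 1$ forces $H_{\C[\x]/\IST}\not\equiv 0$, and by \cref{dim=dim} this relevant dimension equals $\dim\mcl{Z}(\Ann_{\C[\x]}(\C[\x]/\IST))=\dim\mcl{Z}(\IST)$, so the subscheme is zero-dimensional. Finally, since $\rdim_\C(\C[\x]/\IST)=0$, the relevant degree $\rdeg_\C(\C[\x]/\IST)$ is defined and equals the constant value $\abs{\Sigma}\TW$ of the Hilbert polynomial, i.e. $\deg(\IST)=\abs{\Sigma}\TW$.

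The step I expect to be most delicate — though it is really just a matter of bookkeeping — is making sure the surjectivity of $\ev_{\Sigma,T,\d}$ is available on a set of multidegrees cofinal enough to pin down the whole Hilbert \emph{polynomial}, not merely a few values: one needs $\ev_{\Sigma,T,\d}$ surjective for \emph{all} $\d$ componentwise above a threshold, which is exactly the content of \cref{lemma:interpolation:trivial}, so no interpolation lemma beyond that trivial one is needed here. One should also double-check the degenerate possibility $\abs{\Sigma}=0$ or $\TW=0$ does not arise (it does not, since $\Sigma$ is a finite set of points and $T$ a positive integer, and there is at least one derivation), so that $H_{\C[\x]/\IST}$ is a nonzero constant and the relevant dimension is genuinely $0$ rather than $-1$. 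Everything else is a direct unwinding of the definitions of $\rdim$ and $\rdeg$ from \cref{sec:multi:hil} and the dictionary of \cref{dim=dim}.
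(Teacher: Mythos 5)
Your argument is correct and is essentially the paper's own proof: both rest on the short exact sequence $0\to\IST_\d\to\C[\x]_\d\to\C^{\abs{\Sigma}\TW}\to 0$ coming from \cref{def:IST} and the surjectivity in \cref{lemma:interpolation:trivial}, which makes the Hilbert function of $\C[\x]/\IST$ eventually constant equal to $\abs{\Sigma}\TW$, whence zero-dimensionality and the degree via $\rdim$ and $\rdeg$. The appeal to \cref{IST:generated} is superfluous (the exact sequence needs only the definition of $\IST_\d$ as $\ker\ev_{\Sigma,T,\d}$ plus surjectivity), but it does no harm.
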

\begin{proof}
By \cref{def:IST} and \cref{lemma:interpolation:trivial} we have, for $\d$ sufficiently large, the following exact sequence of $\C$-vector spaces:
\[
0\to \IST_\d \longhookrightarrow \C[\x]_\d \xrightarrow{\ev_{\Sigma,T,\d}} \C^{\abs{\Sigma}\TW}\to 0
\]
which immediately implies that the value of the Hilbert function $\dim_\C\left(\C[\x]/\IST \right)_\d$ is constantly equal to $|\Sigma|\TW$ 
 for every $\d$ sufficiently big. The degree of the attached Hilbert polynomial is therefore zero, and its only nonzero term is $|\Sigma|\TW$.
\end{proof}

\begin{proposition}\label{IST:primary:decomposition}
For every $\gamma\in\Sigma$ the ideal $I^{\{\gamma\},1}$ is prime and $I^{\{\gamma\},T}$ is $I^{\{\gamma\},1}$-primary.
The minimal primary decomposition of $\IST$ is 
$\IST=\bigcap_{\gamma\in\Sigma} I^{\{\gamma\},T}$.
\end{proposition}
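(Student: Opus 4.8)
The plan is to prove the three assertions by transporting everything to the local ring $\mcl{O}_{G,\gamma}=\mcl{O}_{\wb{G},\gamma}$, which is well defined and carries the restrictions of the invariant derivations $\de_1,\ld,\de_\nW$ because $\gamma\in G(\C)$ and $G$ is smooth. After normalizing the affine representative of $\gamma\in U$ so that $x_{p,0}(\gamma)=1$ for all $p$, a multihomogeneous $P\in\C[\x]_\d$ defines a germ $f_P\in\mcl{O}_{G,\gamma}$, namely that of the regular function $P/\prod_{p=1}^\q x_{p,0}^{d_p}$, and, directly from \cref{def:ev} and \cref{def:IST}, $P\in I^{\{\gamma\},T}$ if and only if $\de^\sigma f_P(\gamma)=0$ for all $|\sigma|<T$, while $P\in I^{\{\gamma\},1}$ if and only if $f_P\in\m_\gamma$. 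I would record at the outset that $\mfk{a}_{T,\gamma}:=\{f\in\mcl{O}_{G,\gamma}:\de^\sigma f(\gamma)=0\text{ for }|\sigma|<T\}$ is an \emph{ideal} of $\mcl{O}_{G,\gamma}$, which follows from the generalized Leibniz rule $\de^\sigma(fg)=\sum_{\tau\leq\sigma}\binom{\sigma}{\tau}\,\de^\tau f\cdot\de^{\sigma-\tau}g$ — valid precisely because the $\de_i$ commute, the Lie algebra $\mfk{g}$ being commutative.

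The first claim is then immediate: with the above normalization $I^{\{\gamma\},1}$ is the multihomogeneous ideal of the point $\gamma$, generated by the linear forms $x_{p,i}-\gamma_{p,i}x_{p,0}$, so $\C[\x]/I^{\{\gamma\},1}\cong\C[x_{1,0},\ld,x_{\q,0}]$ is a domain with a nonzero component in every multidegree; hence $I^{\{\gamma\},1}$ is a relevant multihomogeneous prime. For the second claim I would first establish the chain $(I^{\{\gamma\},1})^T\subseteq I^{\{\gamma\},T}\subseteq I^{\{\gamma\},1}$: the right-hand inclusion is trivial from the definitions, while for the left one a product of $T$ multihomogeneous elements of $I^{\{\gamma\},1}$ has germ in $\m_\gamma^T$, which lies in $\mfk{a}_{T,\gamma}$ because any derivation of a local ring lowers the $\m$-adic filtration by one. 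Taking radicals and using that $I^{\{\gamma\},1}$ is prime yields $\sqrt{I^{\{\gamma\},T}}=I^{\{\gamma\},1}$. To see that $I^{\{\gamma\},T}$ is primary it suffices, as for any multihomogeneous ideal, to verify the primary condition on multihomogeneous elements: if $PQ\in I^{\{\gamma\},T}$ with $P,Q$ multihomogeneous and $Q\notin I^{\{\gamma\},1}$, then $f_Q$ is a unit of $\mcl{O}_{G,\gamma}$, hence $f_P=(f_Pf_Q)f_Q^{-1}\in\mfk{a}_{T,\gamma}$ since $\mfk{a}_{T,\gamma}$ is an ideal containing $f_{PQ}=f_Pf_Q$, and therefore $P\in I^{\{\gamma\},T}$.

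For the last claim, the identity $\ev_{\Sigma,T,\d}=(\ev_{\{\gamma\},T,\d})_{\gamma\in\Sigma}$ gives $\IST_\d=\bigcap_{\gamma\in\Sigma}I^{\{\gamma\},T}_\d$ on kernels, hence $\IST=\bigcap_{\gamma\in\Sigma}I^{\{\gamma\},T}$ after summing over $\d$. The components $I^{\{\gamma\},T}$ are primary by the previous step, with pairwise distinct radicals $I^{\{\gamma\},1}$ (the primes of distinct points of $\PP^\n_\C$), so it remains only to check irredundancy. Fixing $\gamma_0\in\Sigma$ and taking $\d$ large, the surjectivity in \cref{lemma:interpolation:trivial} produces $P\in\C[\x]_\d$ with $\ev_{\{\gamma_0\},T,\d}(P)\neq 0$ and $\ev_{\{\gamma\},T,\d}(P)=0$ for $\gamma\neq\gamma_0$, that is $P\in\bigl(\bigcap_{\gamma\neq\gamma_0}I^{\{\gamma\},T}\bigr)\setminus I^{\{\gamma_0\},T}$; this shows the decomposition is minimal.

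The step I expect to be the main obstacle is pinning down the dehomogenization dictionary of the first paragraph — i.e. that ``vanishing to order $T$ at $\gamma$ along $\Delta$'', read off on the multigraded polynomial ring, is exactly membership in $\mfk{a}_{T,\gamma}\subseteq\mcl{O}_{G,\gamma}$, and that the invariant derivations genuinely act as commuting derivations of this local ring lowering its $\m$-adic filtration. Once this is set up, primary-ness is the one-line cancellation above and the structure of the decomposition is bookkeeping built on \cref{lemma:interpolation:trivial}.
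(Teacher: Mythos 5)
Your proof is correct, but the mechanism you use for the key point — primariness of $I^{\{\gamma\},T}$ — is genuinely different from the paper's. You verify the primary condition directly on multihomogeneous elements (the standard criterion for $\N^\q$-graded ideals, which you should state or cite explicitly), by passing to the local ring $\mathcal{O}_{G,\gamma}$ and observing that order-$T$ vanishing along the commuting invariant derivations cuts out an ideal $\mathfrak{a}_{T,\gamma}$ there (generalized Leibniz rule; note also that elements of the Lie algebra are algebraic vector fields, so they genuinely act as derivations of the algebraic local ring), so a unit $f_Q$ can be cancelled; the chain $(I^{\{\gamma\},1})^T\subseteq I^{\{\gamma\},T}\subseteq I^{\{\gamma\},1}$ and the radical computation are as in the paper. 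The paper instead never touches the local ring: it shows $I^{\{\gamma\},T}$ is multisaturated by hand, uses \cref{IST:degree} (applied to $\Sigma=\{\gamma\}$) to see that $\mathcal{Z}(I^{\{\gamma\},T})$ is zero-dimensional, and then invokes \cref{multisaturation} to rule out embedded primes, so the minimal primary decomposition has a single component, necessarily $I^{\{\gamma\},T}$ itself. For minimality of $\IST=\bigcap_\gamma I^{\{\gamma\},T}$ you construct explicit witnesses of irredundancy from the surjectivity in \cref{lemma:interpolation:trivial}, while the paper argues abstractly from the distinctness and incomparability of the associated primes (which indeed suffices, since a prime containing a finite intersection contains one of the factors). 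What each route buys: yours is more elementary and self-contained, independent of the degree computation and the multisaturation machinery, and it makes visible where commutativity of $\mathfrak{g}$ and smoothness of $G$ at $\gamma$ are used; the paper's is shorter given the propositions already proved and records in passing the multisaturatedness of $I^{\{\gamma\},T}$, which is of independent interest in the multiprojective formalism.
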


\begin{proof}
$I^{\{\gamma\},1}$ is generated by the multihomogeneous polynomials vanishing at the point $\gamma$ or, in other words, is the ideal of definition for the reduced irreducible multiprojective scheme corresponding to that point. Then $I^{\{\gamma\},1}$ is a prime ideal.
From Leibnitz rule we get $(I^{\{\gamma\},1})^T\subseteq I^{\{\gamma\},T}\subseteq I^{\{\gamma\},1}$ and so the radical of $I^{\{\gamma\},T}$ is $I^{\{\gamma\},1}$. This implies that $I^{\{\gamma\},1}$ is the only minimal prime over $I^{\{\gamma\},T}$. Moreover, $I^{\{\gamma\},T}$ is multisaturated, because if $f\in\C[\x]$ is multihomogeneous and $f\C[\x]_\d\subseteq I^{\{\gamma\},T}$ for some $\d\in\N^\q$, then in particular $fz\in I^{\{\gamma\},T}$ for $z=\prod_{p=1}^\q x_{p,0}^{d_p}$ and so $f\in I^{\{\gamma\},T}$.
By multisaturation and \cref{multisaturation}, since moreover $\mcl{Z}(I^{\{\gamma\},T})$ is zero-dimensional by \cref{IST:degree}, $I^{\{\gamma\},T}$ cannot have embedded associated primes. Therefore its minimal primary decomposition consists of only one primary ideal, necessarily equal to $I^{\{\gamma\},T}$ itself.
Finally, the equality $\IST=\bigcap_{\gamma\in\Sigma} I^{\{\gamma\},T}$ is clear, and since the ideals appearing in this formula are primary ideals corresponding to distinct prime ideals without mutual inclusions, this gives an irredundant primary decomposition for $\IST$.
\end{proof}


\subsection{The main corollary}
\label{sec:application:corollary}

For this paragraph we keep the notations of \cref{sec:application:group} and we denote by $\nG$ the dimension of $G$.
The following is the corollary we aimed for.

\begin{theorem}\label{thm:res:estimate}
Let $\dd=(\mbf{d}^{(0)},\ldots,\mbf{d}^{(\nG)})$ be a collection of multidegrees such that $\ev_{\Sigma,T,\di}$ is surjective for all $i=0,\ldots,\nG-1$. 
Then the resultant  $\Rres_{\dd}(\mfk{G})$ of index $\dd$ attached to the prime ideal $\mfk{G}$ vanishes with multiplicity at least $|\Sigma|\TW$ 
 on every $(\nG+1)$-uple of polynomials in $I^{\Sigma,T}_{\mbf{d}^{(0)}}\times\dots\times I^{\Sigma,T}_{\mbf{d}^{(\nG)}}$.
\end{theorem}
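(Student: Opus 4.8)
The strategy is to reduce \cref{thm:res:estimate} to \cref{thm:main:roy:CM} by setting $I=\mfk{G}$ and $J=I^{\Sigma,T}$ and checking that all hypotheses of that theorem are met. We have $\dim\mcl{Z}(\mfk{G})=\nG$ since $\wb{G}$ is the compactification of the $\nG$-dimensional group $G$. We have $\mfk{G}\subseteq I^{\Sigma,T}$, observed right after \cref{def:IST}. By \cref{IST:degree}, $\dim\mcl{Z}(I^{\Sigma,T})=0$ and $\deg(I^{\Sigma,T})=\abs{\Sigma}\TW$, which will be exactly the claimed multiplicity. So the two remaining conditions to verify are: that $\dim\mcl{Z}(I^{\Sigma,T}_{\di})=0$ for $i=0,\ld,\nG-1$; and that for every relevant $\mfk{p}\in\Ass_{\kk[\x]}(\kk[\x]/I^{\Sigma,T}_{\di}\kk[\x])$, the local ring $(\kk[\x]/\mfk{G})_\mfk{p}$ is Cohen-Macaulay of dimension $\nG$. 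Note $\kk=\C$ here, which is infinite, so the hypothesis on $\kk$ in \cref{thm:main:roy:CM} holds.

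The first of these follows from the surjectivity hypothesis on $\ev_{\Sigma,T,\di}$: by \cref{IST:generated} applied with $\d=\di$ and $\d'=\di+\uno$ (or directly from the fact that surjectivity at $\di$ propagates upward, as in \cref{IST:generated}'s proof), the ideal $(I^{\Sigma,T}_{\di})$ generates $I^{\Sigma,T}$ in all large enough multidegrees, hence $\mcl{Z}(I^{\Sigma,T}_{\di}\C[\x])$ and $\mcl{Z}(I^{\Sigma,T})$ agree away from the irrelevant locus; since the latter is zero-dimensional by \cref{IST:degree}, so is the former. For the Cohen-Macaulay condition: the relevant associated primes of $\C[\x]/I^{\Sigma,T}_{\di}\C[\x]$ all lie over the zero-dimensional scheme $\mcl{Z}(I^{\Sigma,T})$, so by \cref{IST:primary:decomposition} each such $\mfk{p}$ equals some $I^{\{\gamma\},1}$, the ideal of a single point $\gamma\in\Sigma\subseteq G(\C)$. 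Since $G$ is a smooth variety (being an algebraic group, by the Chevalley–Barsotti structure theorem, as recalled in \cref{sec:application:group}), the localization $(\C[\x]/\mfk{G})_{\mfk{p}}=\mcl{O}_{\wb{G},\gamma}$ is a regular local ring, in particular Cohen-Macaulay, and its Krull dimension equals $\dim\wb{G}=\nG$. This is exactly what is required.

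Having checked all hypotheses, \cref{thm:main:roy:CM} yields $\ordF(\Rres_\dd(\mfk{G}))\geq \deg(I^{\Sigma,T})=\abs{\Sigma}\TW$ for every $(\nG+1)$-tuple $\Seq\in I^{\Sigma,T}_{\dz}\times\dots\times I^{\Sigma,T}_{\mbf{d}^{(\nG)}}$, which is the assertion. The main point requiring care is the identification of the relevant associated primes of the truncation $\C[\x]/I^{\Sigma,T}_{\di}\C[\x]$: one must argue that they sit inside $\mcl{Z}(I^{\Sigma,T}_{\di})$, that this scheme is zero-dimensional, and that every zero-dimensional relevant prime containing $I^{\Sigma,T}_{\di}$ contains $I^{\Sigma,T}$ itself (being minimal over $I^{\Sigma,T}_{\di}$, with $\mcl{Z}(I^{\Sigma,T}_{\di})$ and $\mcl{Z}(I^{\Sigma,T})$ sharing the same points off the irrelevant locus), so that it appears among the $I^{\{\gamma\},1}$ of \cref{IST:primary:decomposition}. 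Once this bookkeeping is in place, the smoothness of $G$ does the rest for free, and no genuinely new computation is needed.
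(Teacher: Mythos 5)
Your proposal is correct and follows essentially the same route as the paper: reduce to \cref{thm:main:roy:CM} with $I=\mfk{G}$, $J=\IST$, using \cref{IST:generated} to identify the relevant associated primes of $\C[\x]/\IST_\di\C[\x]$ with the point ideals $I^{\{\gamma\},1}$ of \cref{IST:primary:decomposition}, and smoothness of $G$ at the points of $\Sigma$ to get the Cohen--Macaulay hypothesis, then conclude via \cref{IST:degree}. The only (cosmetic) differences are that the paper argues ``smooth $\Rightarrow$ locally complete intersection $\Rightarrow$ Cohen--Macaulay'' instead of invoking regularity directly, and that $(\C[\x]/\mfk{G})_{\p}$ is strictly speaking a localization of the multihomogeneous coordinate ring (a multi-cone) rather than $\mcl O_{\wb G,\gamma}$ itself, though it is indeed regular of Krull dimension $\nG$ as you claim.
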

\begin{proof}
By \cref{IST:generated} we have $(\IST_\di)=\IST\cap\C[\x]_{\geq\di}$ for every $i=0,\ldots,\nG-1$. 
Therefore for the same values of $i$ we have that $\mcl{Z}(\IST_\di)=\mcl{Z}(\IST)$ and, by \cref{multisaturation}, that the ideals $\IST_\di$ and $\IST$ have the same relevant associated ideals. 
By \cref{IST:primary:decomposition} these primes correspond to reduced irreducible multiprojective subschemes supported on the points of $\Sigma$. 
Since $\Sigma\subseteq G(\C)$ and $G$ is an algebraic group we see that $\mcl{Z}(\mfk{G})$ is smooth at every such point and is therefore locally a complete intersection.
$\C[\x]$ being Cohen-Macaulay at every localization, we deduce that for every relevant $\p\in\Ass_{\C[\x]} (\C[\x]/\IST)$ the local ring $(\C[\x]/\mfk{G})_\p$ is Cohen-Macaulay as well. 
The thesis is then a corollary of \cref{thm:main:roy:CM} and \cref{IST:degree}.
\end{proof}

\begin{remark}
The hypothesis of \cref{thm:res:estimate} are satisfied if the multidegrees $\di$ are large enough, thanks to the trivial estimate given in \cref{lemma:interpolation:trivial}. 
In practice, one may want to apply the theorem in an optimal situation and therefore may seek for sharper conditions that imply the surjectivity of the maps $\ev_{\Sigma,T,\di}$. 
This is exactly the objective of an interpolation lemma, for which we refer the reader, for example, to \cite{Fischler2003},  \cite{Fischler2005} or \cite{Fischler2014}.
\end{remark}



\bibliographystyle{alpha}
\bibliography{biblio_article_resultant_v3}


\end{document}